\author{Dennis Wulle}
\title{Cohomogeneity one manifolds with quasipositive curvature}
\newtheorem{thm}{Theorem}[section]
\newtheorem{lem}{Lemma}[section]
\newtheorem*{lem*}{Lemma}
\newtheorem{dfn}{Defninition}[section]
\newtheorem*{dfn*}{Definition}
\newtheorem{prop}{Proposition}[section]
\newtheorem{cor}{Corollary}[section]
\newtheorem{theorem}{Theorem}
\newtheorem*{theorem*}{Theorem}
\newtheorem*{rem*}{Remark}
\newcommand{\bb}{\mathbb}
\newcommand{\fr}{\mathfrak}
\newcommand{\Ad}{\text{Ad}}
\newcommand{\proj}{\text{pr}}
\newcommand{\Sp}{\text{Sp}}
\newcommand{\SO}{\text{SO}}
\newcommand{\SU}{\text{SU}}
\renewcommand{\O}{\text{O}}
\newcommand{\U}{\text{U}}
\newcommand{\diag}{\text{diag}}
\newcommand{\rk}{\text{rk}\,}
\newcommand{\Spin}{\text{Spin}}
\newcommand{\Id}{\text{Id}}
\newcommand{\norm}[1]{\left\lVert#1\right\rVert}
\newcommand{\RN}[1]{%
	\textup{\uppercase\expandafter{\romannumeral#1}}%
}
\newcommand{\II}{\RN{2}}
\newcommand{\Hess}{\text{Hess}\,}
\newcommand{\ad}{\text{ad}}
\begin{document}
	\maketitle
	
	\begin{abstract}
		In this paper we give a classification of cohomogeneity one manifolds admitting an invariant metric with quasipositive sectional curvature except for the two $7$-dimensional families $P_k$ and $Q_k$, $k \ge 1$, that were described in \cite{gwz}. The main result carries over almost verbatim from the classification results in positive curvature carried out by Verdiani and Grove, Wilking and Ziller (\cite{v1,v2,gwz}). Three main tools used in the positively curved case that we generalized to quasipositively curved cohomogeneity one manifolds are Wilking's Chain Theorem (\cite{w2}), the classification of positively curved fixed point homogeneous manifolds by Grove and Searle (\cite{gs}) and the Rank Lemma.
	\end{abstract}
	
	 The study of compact manifolds with lower sectional curvature bounds has always been one of the classical themes in Riemannian geometry. In particular non-negatively and positively curved manifolds are of special interest. In the first class a lot of examples are known. For instance each quotient space of a compact Lie group by a closed subgroup admits a metric with non-negative curvature. Compared to that there are only a few examples of positively curved manifolds. Indeed,  by the best knowledge of the author, all known examples in dimensions greater than $24$ are the compact rank one symmetric spaces and apart from dimensions $7$ and $13$ only finitely many are known in each dimension. In \cite{w0} Wilking suggested, that it might be fruitful to have a class in between non-negatively and positively curved manifolds. There are two candidates for this class: manifolds with quasipositive sectional curvature, i.e. the manifold is non-negatively curved and there exists one point, such that all sectional curvatures at that point are positive, and manifolds with positive sectional curvature on an open and dense set. Of course it is a natural question to ask, if any of these classes coincide. There are only a few obstructions known distinguishing between these classes, such as Synge's Lemma, which states that a compact positively curved manifold is simply connected, if it is orientable and even dimensional, and orientable, if it is odd dimensional. Unfortunately, once the manifold is assumed to be simply connected, there are no obstructions known to distinguish between these classes.

It turned out to be a fruitful approach to study compact manifolds with lower curvature bounds in presence of an isometric action by a compact Lie group. There are several ways to measure the size of the group action. We will focus on the \emph{cohomogeneity} of the action, i.e. the dimension of the orbit space. The study of positively curved manifolds with symmetry has lead to a variety of results. For example Wilking showed, that in large dimensions (compared to the cohomogeneity) manifolds with invariant metrics of positive curvature are homotopy equivalent to a rank one symmetric space (cf. \cite{w2}, Corollary 3). Furthermore the classification of positively curved cohomogeneity one manifolds has been carried out by Verdiani in even dimensions \cite{v1,v2} and by Grove, Wilking and Ziller in odd dimensions up to two families in dimension seven \cite{gwz}. These families have cohomogeneity one actions by $S^3 \times S^3$ and are further described in Table \ref{appendix_table_odd_cohom_one}. There also have been results on positively curved manifolds almost everywhere. In particular Wilking constructed a series of these manifolds in cohomogeneity two:

\begin{theorem*}[Wilking, \cite{w0} Theorem 1, Proposition 6.1.]
	Each of  the projective tangent bundles  \emph{$P_{\bb R} T \bb R \bb P^n$}, \emph{$P_{\bb C} T \bb C \bb P^n$} and \emph{$P_{\bb H} T \bb H \bb P^n$} of $\bb R \bb P^n$, $\bb C \bb P^n$ and $\bb H \bb P^n$ admits a Riemannian metric with positive sectional curvature on an open and dense set. These metrics are invariant by cohomogeneity two actions of \emph{$\O(n)$}, \emph{$\U(n)$} and \emph{$\Sp(n)$}, respectively. Furthermore the natural inclusions are totally geodesic.
\end{theorem*}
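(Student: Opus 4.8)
\emph{Idea of proof.} The plan is to present each of the three total spaces as a homogeneous space of a compact Lie group, to deform the normal homogeneous metric by one or more Cheeger deformations so that the residual isometry group still contains the claimed cohomogeneity-two group, and then to show that the zero-curvature planes of the deformed metric (which is still nonnegatively curved) lie over a nowhere-dense set. Concretely, for $n\ge 2$ let $d=\dim_{\bb R}\bb K$ and write $S^{d-1}$ for $\{\pm1\}$, $\U(1)$, $\Sp(1)$ and $G_m$ for $\O(m)$, $\U(m)$, $\Sp(m)$, respectively. A $\bb K$-line in a tangent space of $\bb K\bb P^n$ is the same datum as a pair of mutually orthogonal $\bb K$-lines in $\bb K^{n+1}$ (the basepoint line and the image line of any generator of the given tangent line), and $G_{n+1}$ acts transitively on such pairs, so $P_{\bb K}T\bb K\bb P^n\cong G_{n+1}/H$ with $H=S^{d-1}\times S^{d-1}\times G_{n-1}$. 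The subgroup $G_n\subset G_{n+1}$ fixing the first coordinate line of $\bb K^{n+1}=\bb K\oplus\bb K^n$ acts on $G_{n+1}/H$, and parametrising an orthonormal pair $(u,v)$ in $\bb K^{n+1}$ as $u=\cos\alpha\,e_0+\sin\alpha\,u'$, $v=\cos\beta\,e_0+\sin\beta\,v'$ with $u',v'\in S(\bb K^n)$ shows that a complete set of $G_n$-orbit invariants is given by $(\alpha,\beta,\langle u',v'\rangle)$, subject to the orthogonality relation and to the unit-scalar normalisations available on $u$ and $v$; this leaves two free parameters, so the action has cohomogeneity two.

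For the metric, equip $G_{n+1}$ with a bi-invariant metric, take the induced normal homogeneous metric on $G_{n+1}/H$, and Cheeger-deform with respect to $G_n$ (combined, if necessary, with a left $G_{n+1}$-deformation). A Cheeger deformation preserves nonnegative curvature and produces a $G_n$-invariant metric, so one obtains a nonnegatively curved, $G_n$-invariant metric on $P_{\bb K}T\bb K\bb P^n$ of cohomogeneity two. The chains $\O(n+1)\subset\U(n+1)\subset\Sp(n+1)$ together with their compatible subgroups realise $P_{\bb R}T\bb R\bb P^n\subset P_{\bb C}T\bb C\bb P^n\subset P_{\bb H}T\bb H\bb P^n$ as fixed-point sets of the isometric involutions induced by the structure-conjugations of $\bb C^{n+1}$ and $\bb H^{n+1}$ (complex conjugation, for instance, fixes exactly the pairs of mutually orthogonal \emph{real} lines); these conjugations are automorphisms preserving the bi-invariant metric, $G_n$, and all auxiliary subgroups, hence preserve the deformed metric, and fixed-point sets of isometries are totally geodesic. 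This gives the last assertion, once one checks that the Cheeger deformations restrict compatibly along the chain, which follows from compatibility of the bi-invariant metrics and of the deforming subgroups.

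It remains to establish positive curvature on an open dense set. By the Cheeger and O'Neill curvature formulas, a plane in $P_{\bb K}T\bb K\bb P^n$ has zero curvature exactly when its horizontal lift to the total space of the Cheeger submersion is flat \emph{and} the relevant O'Neill integrability tensors vanish on it. Flat planes in a Cheeger deformation of a normal homogeneous metric are heavily constrained — they must be abelian for the underlying bi-invariant metric and, in addition, compatible with the deformation, which forces them into a prescribed position relative to $\fr{g}_n$; adjoining the vanishing of the integrability tensors confines the footpoints of the zero-curvature planes to a proper real-analytic subset of $P_{\bb K}T\bb K\bb P^n$, whose complement is therefore open and dense. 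Evaluating the resulting commutator conditions on a principal $G_n$-orbit with $\langle u',v'\rangle$, $\alpha$ and $\beta$ in general position, one exhibits a point through which no zero-curvature plane passes, so the metric has positive curvature on an open dense set; in particular it is quasipositively curved.

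The main obstacle is this last step. It requires the explicit Lie-algebraic description of the horizontal distribution and of the integrability tensors for the chosen presentation, followed by a somewhat delicate case analysis of the bracket relations that flatness of the deformed metric imposes, in order to rule out a flat plane through a suitably generic point. A secondary issue is that the singular $G_n$-orbits, where the isotropy jumps, must not spoil the open-dense conclusion; this is automatic, however, since positivity of the curvature at a point is an open condition and the singular set is lower-dimensional, so it suffices to verify positivity at a single principal-orbit point.
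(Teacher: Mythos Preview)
The paper does not contain a proof of this theorem. It is quoted in the introduction as a result of Wilking, with an explicit citation to \cite{w0}, Theorem~1 and Proposition~6.1, and serves only as motivation for the classification problem the paper then addresses. There is therefore nothing in the present paper to compare your proposal against.

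That said, a brief comment on your sketch: the overall architecture --- realise $P_{\bb K}T\bb K\bb P^n$ as a homogeneous space $G_{n+1}/(S^{d-1}\times S^{d-1}\times G_{n-1})$, Cheeger-deform a normal homogeneous metric along a suitable subgroup, obtain the totally geodesic inclusions as fixed-point sets of structure conjugations, and then analyse the residual flat planes --- is indeed the shape of Wilking's argument. You correctly identify the crux: the last step, showing that through a generic principal-orbit point no flat plane survives, is where all the work lies, and your proposal does not carry it out. The phrase ``confines the footpoints of the zero-curvature planes to a proper real-analytic subset'' is the desired conclusion, not an argument; one must actually exhibit a point and verify, via explicit commutator and O'Neill-tensor computations in the Lie algebra, that no plane satisfies all the flatness constraints simultaneously. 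Without that computation the proposal is a plausible outline rather than a proof. If you want to complete it, consult Wilking's original paper directly, since the present paper offers no details.
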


$P_{\bb R} T \bb R \bb P^{2n+1}$ cannot have a metric with positive sectional curvature by Synge's Lemma, since it is not orientable and odd dimensional. Note, that even in the simply connected case a series like this with positively curved metrics and increasing dimension cannot exist in any fixed cohomogeneity, unless the manifolds are homotopy equivalent to rank one symmetric spaces,  by the already mentioned result of Wilking. It is therefore only natural to ask, whether a series like this can exist in cohomogeneity one in the presence of a metric with positive sectional curvature almost everywhere or with quasipositive curvature.  We can answer this question negatively and give a classification of cohomogeneity one manifolds with quasipositive curvature except for dimension seven.

Our main result shows that the already mentioned classification result (\cite{v1,v2, gwz}) carries over nearly verbatim, if we relax the condition of positive sectional curvature to quasipositive curvature. 
\begin{theorem}\label{main}
	Let $M$ be a simply connected compact cohomogeneity one manifold. If $M$ admits an invariant metric with quasipositive sectional curvature, then one of the following holds:
	\begin{enumerate}[ topsep=0pt,itemsep=-1ex,partopsep=1ex,parsep=1ex]
		\item The dimension of $M$ is even and $M$ is equivariantly diffeomorphic to a rank one symmetric space with a linear action.
		\item The dimension of $M$ is odd and $M$ is  equivariantly diffeomorphic to one of the following spaces:
		\begin{enumerate}[ topsep=0pt,itemsep=-1ex,partopsep=1ex,parsep=1ex]
			\item A sphere with a linear action.
			\item One of the Eschenburg spaces $E_p^7$, one of the Bazaikin spaces $B_p^{13}$ for $p \ge 0$ or $B^7$.
			\item One of the $7$-manifolds $P_k$ or $Q_k$ for $k \ge 1$.
		\end{enumerate}
	\end{enumerate}
\end{theorem}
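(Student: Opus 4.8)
The plan is to carry the classification arguments of Verdiani (\cite{v1,v2}, even dimensions) and Grove--Wilking--Ziller (\cite{gwz}, odd dimensions) over essentially line by line, replacing every appeal to positive curvature by a quasipositive substitute. The basic observation that makes this feasible is that having all sectional curvatures positive at a point is an open condition, so the set $U \subseteq M$ of points of positive curvature is open; since the metric is $G$-invariant and $U$ is non-empty, $U$ is also $G$-invariant and therefore contains a principal orbit. Thus one may always assume the distinguished point of positive curvature lies on the regular part $M^{\mathrm{reg}} \cong (-1,1) \times G/H$. I would begin by recording the group-diagram description $(G,H,K^-,K^+)$ with $K^\pm/H$ spheres, together with the standard constraints coming from simple-connectivity and primitivity of the diagram, since the entire case analysis takes place at this combinatorial level.

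The next step is to isolate where curvature positivity is actually used in the two source papers: it enters only through (i) Wilking's Chain Theorem (\cite{w2}) and the attendant Frankel- and connectedness-type statements for totally geodesic subsets, (ii) the Grove--Searle classification of positively curved fixed point homogeneous manifolds (\cite{gs}), and (iii) the Rank Lemma constraining the ranks of the isotropy groups. The heart of the matter is to establish quasipositive versions of (i)--(iii) in the cohomogeneity one setting. For (i) and (iii) the point is that the totally geodesic subsets, extremal geodesics and Killing fields occurring in the proofs are built from $G$-orbits, and can be chosen to meet the invariant open set $U$, which together with $G$-invariance restores the positively curved conclusions. For (ii) one uses that a fixed point homogeneous action is itself of cohomogeneity one with the fixed-point component as one singular orbit, and that fixed-point sets of isometries are totally geodesic, so that Grove--Searle's argument can be re-run in the quasipositive category.

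With (i)--(iii) in hand I would split into the even- and odd-dimensional cases. In even dimensions, following \cite{v1,v2}, the rank and fixed-point-homogeneous inputs reduce the admissible groups and diagrams to a short finite list, and the remaining identification of each survivor with a rank one symmetric space carrying a linear action is topological and Lie-theoretic, hence unaffected by weakening the curvature hypothesis. In odd dimensions, following \cite{gwz}, the analysis is longer: one organizes it by the codimensions of the singular orbits and the structure of the principal isotropy, propagates constraints between the two halves of the diagram via the Chain Theorem, and checks that each surviving diagram is equivariantly diffeomorphic to a sphere with a linear action, to an Eschenburg space $E^7_p$, to a Bazaikin space $B^{13}_p$ or $B^7$, or to a member of the families $P_k$, $Q_k$ --- the latter two being precisely the cases that the available tools cannot exclude, which is why they remain in the statement.

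I expect the main obstacle to be the upgrade of the three tools (i)--(iii) from positive to quasipositive curvature. In each of them the original proof extracts rigidity from strict positivity of the curvature along a distinguished geodesic, Killing field or distance function, and under the weaker hypothesis one must verify that these distinguished objects can be chosen to meet $U$; this is plausible exactly because in cohomogeneity one every relevant submanifold (singular orbits, totally geodesic subsets, fixed-point components) is a union of $G$-orbits and $U$ is $G$-invariant, but it requires a case-by-case inspection. A secondary, more tedious difficulty is to confirm that the arguments in \cite{v1,v2,gwz} --- especially the ad hoc ones handling the small exceptional diagrams --- invoke curvature positivity only through (i)--(iii) and nowhere else.
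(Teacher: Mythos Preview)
Your high-level strategy --- reduce to essential actions, establish quasipositive analogues of the three curvature-sensitive tools, then rerun the case analysis of \cite{v1,v2,gwz} --- matches the paper's. But your sketch of \emph{how} to obtain the quasipositive versions of the tools is where the proposal breaks down.

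You write that for the Chain Theorem and the Rank Lemma ``the totally geodesic subsets, extremal geodesics and Killing fields \ldots\ can be chosen to meet the invariant open set $U$, which together with $G$-invariance restores the positively curved conclusions.'' The paper explicitly contradicts this: it states that the original proofs do \emph{not} carry over and that genuinely new arguments are required. The Chain Theorem is replaced by the Block Theorem, proved via a new pointwise computation (the Block Lemma): one writes the Gauss equations for a principal orbit at a point of positive curvature and shows that the shape-operator derivatives force $d-k\le r+1$ by a pairwise-negative-inner-product argument in $\bb R^r$. Wilking's original Connectedness-Lemma route is not available because the totally geodesic submanifolds it uses need not pass through $U$. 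Likewise the Rank Lemma in positive curvature comes from the fact that a torus action on an even-dimensional positively curved manifold has a fixed point; in quasipositive curvature this fails, and the paper instead proves a Core--Weyl Lemma (classifying quasipositive cohomogeneity one actions with trivial principal isotropy) via an analytic rigidity argument for orders of products of involutions, and deduces the Rank Lemma from it. Your description of the fixed point homogeneous step is also off: a fixed point homogeneous action is not automatically cohomogeneity one; the paper keeps the ambient cohomogeneity one action, passes to the orbit space of the fixed point homogeneous subaction, and uses Riccati comparison along horizontal geodesics (which do meet $U$) to show the soul of the orbit space is a single point.

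Two further omissions. First, the paper introduces a new obstruction, the Partial Frankel Lemma, and a second-fundamental-form weight analysis that are essential in the $S^3\times S^3$ case (Lemma~\ref{odd_lem_s3s3}); this is precisely where the extra families $E_0^7$ and $B_0^{13}$ (not present in the positive-curvature list) enter, and where the candidate $R$ must be excluded separately using \cite{vz}. Second, your claim that the small exceptional diagrams in \cite{gwz} use curvature only through (i)--(iii) is not true: several are ruled out in \cite{gwz} by exhibiting zero curvatures at a singular orbit, a tool that simply disappears in the quasipositive setting and has to be replaced case by case (often by Lemma~\ref{obstructions_lem_totgeod} or the Partial Frankel Lemma).
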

In the odd dimensional case the cohomogeneity one actions in the non spherical examples can be found in Table \ref{appendix_table_odd_cohom_one}. Among these examples $E_p^7$, $B_p^{13}$ for $p \ge 1$, $B^7$, $P_1$ and $Q_1$ admit an invariant metric with positive curvature, since $P_1 = \bb S^7$ and $Q_1$ is equivariantly diffeomorphic to a positively curved Aloff-Wallach space. Kerin showed in \cite{k}, that the non-negatively curved Eschenburg and Bazaikin spaces $E_0^7$ and $B_0^{13}$ admit metrics with positive sectional curvature almost everywhere. Those are the only two examples that do not occur in the classification result of \cite{gwz}. Note, that the result in \cite{gwz} contains an additional seven dimensional manifold $R$. Verdiani and Ziller showed in \cite{vz}, that this manifold cannot admit a metric with positive sectional curvature. The same argument also shows, that $R$ cannot admit a metric with quasipositive curvature.\\

 There are three main difficulties in carrying over the tools from \cite{gwz}. The first one is that we cannot prove directly that there is an isotropy group whose corank is at most one. This makes the classification of cohomogeneity one manifolds with quasipositive curvature and trivial principal isotropy group more complicated. The second essential tool is just missing. We cannot rule out a cohomogeneity one manifold solely by the fact that there are zero curvatures at a singular orbit. The third is that the Chain Theorem needs a completely different proof. A version of this theorem for cohomogeneity one actions on quasipositively curved manifolds, is stated below as the Block Theorem.
\\

We continue with a short overview on the tools used to prove Theorem \ref{main}. The basic method is, of course, to compute the possible group diagrams induced by the cohomogeneity one action in presence of a quasipositively curved metric. Since there exist a great variety of these, we need certain \emph{recognition tools} to narrow down the possible cases. For this a first step is the complete classification of  low cohomogeneity group actions on spheres by Straume \cite{st}. In cohomogeneity one the classification contains a family of spheres with non-linear actions and some of these are exotic Kevaire spheres. As pointed out in \cite{gwz}, it was observed by Back and Hsiang \cite{bh} for dimensions greater than $5$ and by Searle \cite{s} in dimension $5$ that these families cannot admit metrics with positive sectional curvature. In \cite{gvwz} it was shown that these families even do not  admit metrics with non negative curvature in dimensions greater than $5$. In Section \ref{secobstructions} we will use a new obstruction to show that in dimension $5$ these manifolds also do not carry an invariant metric with quasipositive curvature. In particular this will help, since we only have to recognize a sphere up to homotopy. Similar conclusions hold for the other rank one symmetric spaces. There are two main tools we use to determine the homotopy type. The first one is the classification of positively curved fixed point homogeneous manifolds by Grove and Searle \cite{gs}. The proof relies on the fact that in such manifolds there is exactly one orbit at maximal distance to the fixed point set. This follows from the Alexandrov geometry of the orbit space, since the distance function from its boundary is strictly concave (cf. \cite{p} Theorem 6.1). In the case of quasipositive curvature this proof does not carry over right away. In Section \ref{secobstructions} we will show that the same classification can still be carried out in quasipositive curvature provided there is a cohomogeneity one action containing a fixed point homogeneous subaction. We have the following result.
\begin{theorem} \label{intro_fp_hom}
	Let $M$ be a compact quasipositively curved cohomogeneity one $G$-manifold, where $G$ is a compact Lie group. Assume, there is a subgroup $G^\prime\subset G$ that acts fixed point homogeneous on $M$. Then $M$ is $G$-equivariantly diffeomorphic to a rank one symmetric space with a linear action.
\end{theorem}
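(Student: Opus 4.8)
The strategy is to mimic the Grove--Searle argument for positively curved fixed point homogeneous manifolds, replacing the strict concavity of the distance function from the boundary of the orbit space (which fails in quasipositive curvature) by a structural analysis of the cohomogeneity one diagram. So let $G'\subset G$ act fixed point homogeneously on $M$, meaning $\dim F = \dim M - \dim (M/G')$ for some component $F$ of the fixed point set $M^{G'}$; since $G'$ acts by isometries, $F$ is a totally geodesic submanifold, and fixed point homogeneity forces $G'$ to act transitively on the unit normal sphere of $F$ at each point, so $F$ is a codimension-$(k+1)$ submanifold with $G'$ acting on the normal $\bb S^k$. First I would record how $F$ sits inside the cohomogeneity one picture: the $G$-action has orbit space an interval $[0,L]$ (the simply connected case), with singular orbits $B_\pm = G/K_\pm$ over the endpoints and principal orbits $G/H$ in the interior, and $F$ must be a union of $G'$-invariant pieces compatible with this. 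The key point is that $F$, being fixed by $G'$, is disjoint from the interior of the normal bundle directions moved by $G'$, and its ``soul-like'' position at one end of the orbit interval is what one wants to pin down.

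The main structural step is to show that $F$ is (up to the $G$-action) a single orbit or the union of a singular orbit with a totally geodesic submanifold at maximal distance, forcing the orbit space of the $G'$-action to itself be an interval of the form $[0,r]$ with $F$ at $0$ and a single $G'$-orbit at the far end $r$. In positive curvature this ``single orbit at maximal distance'' is exactly Grove--Searle and follows from strict concavity; here I would instead argue as follows. Pass to the point $p_0$ where all sectional curvatures are positive. Either $p_0$ lies in the closure of the set where the distance function $f = d(\cdot, F)$ has a positive-curvature-controlled Hessian, in which case a localized version of the concavity estimate still applies along geodesics emanating from $F$ through a neighborhood of $p_0$ and one recovers a unique maximum set; or $p_0$ does not, in which case I would use the $G$-invariance to \emph{move} the positive-curvature point: since the metric is $G$-invariant and $G$ acts with an orbit of positive dimension through generic points, the set of points with fully positive curvature, though possibly not all of $M$, is $G$-invariant and hence a union of orbits, in particular it contains whole principal orbits, and one can then run the Hessian comparison along the orbit-space interval where it is genuinely strict. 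This reduces $M$ to the Grove--Searle normal-bundle description: $M$ is the union of a disk bundle over $F$ and a disk bundle over the orbit $E$ at maximal distance, glued along their common boundary, which by the classification of such ``double disk bundle'' decompositions with a transitive action on the gluing sphere yields precisely the rank one symmetric spaces.

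With the double disk bundle decomposition in hand, the remaining step is bookkeeping: the total space of a linear disk bundle over an orbit, glued to another such along a sphere bundle, with the cohomogeneity one $G$-action, is equivariantly diffeomorphic to one of $\bb S^n$, $\bb C\bb P^n$, $\bb H\bb P^n$, $\bb{CaP}^2$ with a linear action --- one checks the normal representations $K_\pm/H \cong \bb S^{\ell_\pm}$ and matches against Straume's list of cohomogeneity one actions on spheres and against the known cohomogeneity one structures on the projective spaces, using that the action on the normal sphere to $F$ is transitive (that is the fixed point homogeneity hypothesis) to rule out the exotic/nonlinear possibilities. I expect the genuine obstacle to be the second step: making precise the claim that quasipositivity at a \emph{single} point is enough to force the unique-maximum-set conclusion for $d(\cdot,F)$. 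The honest way to do this is probably not to move the point at all but to observe that the concavity of $f$ on the orbit space $M/G' = [0,r]$ is a one-variable statement, $f'' \le -\!\!\int (\text{sectional curvatures along the minimizing horizontal geodesic})$, which is $\le 0$ everywhere by non-negativity and is \emph{strictly} negative on any sub-interval whose corresponding horizontal geodesic passes through the positive-curvature point; since every horizontal geodesic from the $F$-end that realizes the maximal distance must, by the $G$-invariance of the zero-curvature locus being a proper closed invariant set, meet the positive-curvature orbit, $f$ is strictly concave \emph{near its maximum}, and that local strict concavity already forces the maximum set to be a single $G'$-orbit --- which is all Grove--Searle needs.
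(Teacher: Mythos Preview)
Your overall strategy---reduce to Grove--Searle by establishing a unique $G'$-orbit at maximal distance from the fixed point component $F$, using that concavity of $d(\cdot,\bar F)$ becomes \emph{strict} once the relevant geodesic meets the positive-curvature set---is exactly the route the paper takes. But there is a structural gap in your execution that the paper closes and you do not.

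You write ``the orbit space $M/G' = [0,r]$'' and treat the concavity as a one-variable statement. That is not correct: $G'$ is only a subgroup of $G$, so $M/G'$ is a higher-dimensional Alexandrov space, not an interval, and there are many distinct minimizing geodesics from $\bar F$ to the maximum set $C$. Your appeal to ``the $G$-invariance of the zero-curvature locus being a proper closed invariant set'' does not by itself force each such geodesic to meet the positive-curvature region. What makes this work in the paper is two preliminary reductions you omit. First, one reduces (via linear primitivity) to the case where $G'$ is a \emph{normal} factor of $G$; this forces $F$ to be the singular orbit $B_-$ and, crucially, makes the induced $G/G'$-action on $M' = M/G'$ a genuine cohomogeneity one action with $\bar F$ and $\pi'(B_+)$ as its two non-principal orbits. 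Second, with this in hand one knows a priori that $C = \pi'(B_+)$ and that $M'\setminus C$ is smooth, so any normal geodesic from $\bar F$ to $C$ in $M'$ is the image of a horizontal geodesic for the full $G$-action on $M$---and \emph{those} geodesics hit every $G$-orbit, in particular the positive-curvature ones. Only then does the Riccati comparison (with $A(0)=0$ since $\bar F$ is totally geodesic) give eventual strict negativity of the Hessian along every such geodesic and hence the uniqueness of the maximum. Without the normality reduction and the cohomogeneity one structure on $M'$, your strict-concavity step does not go through.
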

The second tool is Wilking's Connectedness Lemma (see Lemma \ref{obstructions_thm_connect}), which also holds in many situations in quasipositive curvature.  \\
In the presence of positive curvature there is a third tool used in \cite{gwz}, which is Wilking's Chain Theorem (\cite{w2}, Theorem 5.1) that states that a simply connected positively curved manifold with an isometric action by one of the classical Lie groups $\SO(d)$, $\SU(d)$ or $\Sp(d)$ is homotopy equivalent to a rank one symmetric space, provided that the principal isotropy group contains a $k \times k$-block with $k\ge 3$, or $k \ge 2$, if $\Sp(d)$ acts. We were able to generalize this result to quasipositive curvature, provided that there is a cohomogeneity one action on the manifold. 
\begin{theorem}[Block Theorem]\label{intro_block}
	Let $M$ be a simply connected Riemannian manifold with quasipositive sectional curvature, such that $G= L \times G_d$ acts isometrically on $M$ with cohomogeneity one, where $L$ is a compact connected Lie group and \emph{$(G_d,u) \in \lbrace (\text{SO}(d),1),$ $(\text{SU}(d),2),$ $(\text{Sp}(d),4) \rbrace$}. Assume that the principal isotropy group contains up to conjugacy a lower $k\times k$-block, with $k \ge 3$, if $u = 1,2$, and $k\ge 2$, if $u = 4$. Then $M$ is equivariantly diffeomorphic to a rank one symmetric space with a linear action.
\end{theorem}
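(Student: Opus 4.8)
The plan is to exhibit a subgroup of $G$ that acts fixed point homogeneously on $M$, so that Theorem~\ref{intro_fp_hom} applies. Fix a group diagram $(G,H,K^-,K^+)$ with principal isotropy group $H$ and normal geodesic $\gamma$, and arrange by conjugation that the lower $k\times k$-block $G_k\subseteq G_d$ lies in $H$; then $G_k$ fixes $\gamma$ pointwise, so $M^{G_k}$ is a non-empty totally geodesic submanifold. The first step is to read off the slice representation $\fr g/\fr h$ as a $G_k$-module. Since $G_k\subseteq H$, the subspace $\fr h$ is $\Ad(G_k)$-invariant, and decomposing $\fr g=\fr l\oplus\fr g_d$ one finds that $\fr g/\fr h$ splits, as a $G_k$-module, into a trivial summand and $m$ copies, $m\ge 1$, of the standard representation $V$ of $G_k$ (on $\bb R^k$, $\bb C^k$ or $\bb H^k$): indeed $\fr l$ and the complementary block are $G_k$-trivial, the block $\fr g_k$ lies in $\fr h$, and the remaining off-diagonal part of $\fr g_d$ consists of copies of $V$. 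If $m=1$, then the normal sphere to $M^{G_k}$ at a point of $\gamma$ is the unit sphere of $V$, on which $G_k$ acts transitively; hence $G_k$ acts fixed point homogeneously and Theorem~\ref{intro_fp_hom} finishes the proof.

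When $m\ge 2$ I would run an induction on the pair $(u,k)$. Pick a suitable rank one block $G'\subseteq G_k$ --- an $\SO(2)$, a $\U(1)$ or an $\Sp(1)$ --- and pass to a component $N$ of its fixed point set $M^{G'}$. Since $G'\subseteq H$, the normal geodesic $\gamma$ lies in $N$, so $N$ is a totally geodesic, quasipositively curved submanifold on which the residual group $\overline G$ --- the normalizer $N_G(G')$ modulo the subgroup acting trivially on $N$ --- acts with cohomogeneity one, its orbit space again being the segment parametrised by $\gamma$. One then checks that the principal isotropy group of this action still contains a classical block of strictly smaller size and that $\overline G$ can be put into the form $L'\times G_{d'}$ required for the induction; this is where the hypothesis $m\ge 2$ enters, since it forces, via the Lie bracket, the extra orthogonal block summands of $\fr h\cap\fr g_d$ that make the bookkeeping work. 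Wilking's Connectedness Lemma (Lemma~\ref{obstructions_thm_connect}) shows that the inclusion $N\hookrightarrow M$ is highly connected, so that in the relevant range $N$ is again simply connected and the induction hypothesis applies to it.

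The induction bottoms out when $k$ has been reduced to the threshold value. At that stage either some subgroup of $\overline G$ acts fixed point homogeneously, so that Theorem~\ref{intro_fp_hom} identifies $N$ with a linear model on a rank one symmetric space, or the group diagram of $N$ is now completely determined and is itself one of the linear models; the genuinely degenerate configurations, together with the small-dimensional leftovers --- in particular the recognition of a homotopy sphere --- are handled separately, the latter using Straume's classification~\cite{st} of low cohomogeneity actions on spheres together with the obstruction of Section~\ref{secobstructions}, which rules out the non-linear and exotic examples in quasipositive curvature. Propagating this conclusion back up the chain through the Connectedness Lemma then shows that $M$ has the integral cohomology, hence --- being simply connected --- the homotopy type of a rank one symmetric space, and the classification of cohomogeneity one actions on these spaces upgrades this to the desired equivariant diffeomorphism with a linear action.

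The main obstacle is carrying all of this out with only quasipositive curvature. Wilking's Connectedness Lemma has to be applied at every stage of the chain, and its proof genuinely needs more than $\sec\ge 0$; one must ensure that a point of positive curvature --- or a workable substitute --- survives into each successive fixed point submanifold, which is precisely where the positively curved argument of \cite{gwz} does not transfer verbatim. The second delicate point is keeping a classical block alive in the isotropy of the residual action, since a careless choice of $G'$ collapses it; controlling this forces one to use both the closure of $\fr h$ under the bracket and the rigidity of cohomogeneity one group diagrams. The $\Sp(d)$ case with $k=2$, and the base cases of small $k$, are the most technical points.
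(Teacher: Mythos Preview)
Your proposal follows Wilking's original Chain Theorem strategy from \cite{w2}: pass to fixed point sets of rank-one blocks, apply the induction hypothesis there, and propagate the conclusion back to $M$ via the Connectedness Lemma. You correctly identify the obstruction yourself --- the Connectedness Lemma in quasipositive curvature (see the remark after Theorem~\ref{obstructions_thm_connect}) requires that \emph{every} geodesic normal to the totally geodesic submanifold meet the open set of positive curvature, and for a fixed point component $N=M^{G'}$ there is no mechanism that forces this: the positively curved region is a $G$-invariant tube around some principal orbits, while a geodesic normal to $N$ need not be horizontal for the $G$-action and can stay entirely inside a single (possibly flat) orbit. So the connectedness of $N\hookrightarrow M$, which is the whole engine of the chain argument, is simply unavailable. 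You flag this as ``the main obstacle'' but do not close the gap; as written this is a sketch of the positively curved proof together with an honest statement of why it breaks, not a proof in the quasipositive setting.

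The paper takes a genuinely different route that sidesteps the Connectedness Lemma entirely. The key new ingredient is the Block Lemma (Lemma~\ref{block_lem_blocklemma}): at a \emph{single} regular point $p$ of positive curvature one writes out the Gauss equations for the orbit $G\cdot p$, uses the $\Ad_H$-equivariance of the shape operators to see that on the off-diagonal $(d-k)\times k$ block the second fundamental form is governed by $d-k$ vectors $\lambda_1,\dots,\lambda_{d-k}\in\bb R^{r}$ (with $r$ the cohomogeneity), and then the positivity of a specific family of sectional curvatures forces $\langle\lambda_i,\lambda_j\rangle<0$ for $i\neq j$, hence $d-k\le r+1$. For $r=1$ this gives $d-k\le 2$, and the case $d-k=1$ is inessential. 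The remainder of the paper's proof is a finite (if lengthy) case analysis with $d-k=2$, using the concrete description of $C(B_k)$ and $N(B_k)$, the Isotropy and Core-Weyl Lemmas, linear primitivity, and the specialized cohomogeneity-one versions of Frankel and the Connectedness Lemma (Lemmas~\ref{obstruction_lem_partial} and \ref{obstructions_lem_totgeod}), which only ever need totally geodesic \emph{singular orbits} and hence do work in quasipositive curvature. The trade-off is clear: your approach would be more conceptual if the Connectedness Lemma were available, but since it is not, the paper replaces the induction by one sharp curvature computation and a classification argument.
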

Another very important variable for a cohomogeneity one $G$-manifold $M$ is the rank of $G$. In positive curvature it is a simple fact, that the corank of the principal isotropy group in $G$ is equal to $1$, if $M$ is even dimensional, and $0$ or $2$, if $M$ is odd dimensional. In Section \ref{secweyl} we will prove, that this basic fact also holds in the presence of an invariant metric with quasipositive curvature. The original proof does not carry over and our proof of this result requires knowledge on the Weyl group, i.e. the stabilizer group of a horizontal geodesic.\\

The structure of this paper is very similar to \cite{gwz}. One reason for this is that, once the main tools from the positively curved case are carried over to the quasipositively curved case, some parts of the classification work in the same or a very similar way as in the case of positive curvature. In this way it will be easier to compare both papers. Furthermore, this will be helpful to point out different approaches compared to the positively curved case. The paper is organised as follows. We start with a short section about cohomogeneity one manifolds without any curvature assumptions. Since \cite{gwz} Section 1 covers the same material and is already quite detailed, we recommend to also read this. 
In Section \ref{secobstructions} we proceed with a collection of obstructions to quasipositive curvature used throughout the paper, including the proof of Theorem \ref{intro_fp_hom}.
We will continue with the proof of the Rank Lemma in Section \ref{secweyl} and go on with the proof of Theorem \ref{intro_block} in Section \ref{secblock}. Afterwards we will continue with the classification in Sections \ref{seceven} and \ref{secodd}. In the even dimensional case, the main difference is the case, where the group acting is semisimple but not simple. The simple cases are mostly handled as in \cite{gwz}, section 14. In the odd dimensional case, the main difference will also be the non simple cases as well as the simple cases in low dimensions, which require mostly new proofs. We recommend to compare our proofs to \cite{gwz}. Note that most of the results of Sections \ref{secweyl}, \ref{seceven} and \ref{secodd} are taken over (almost) verbatim from \cite{gwz} with the relaxed curvature condition to make the comparison easier, although new proofs are required for most key statements.\\

This paper is part of the authors PhD project. The results in Section \ref{secweyl} were part of the authors unpublished master thesis.\\

\emph{Acknowledgements:} I want to thank Burkhard Wilking for introducing this problem to me and for the useful discussions and comments while working on this project. Furthermore, I want to thank Kevin Poljsak and Jan Nienhaus for useful comments and discussions. I especially thank Linus Kramer for his comments regarding the results in Section \ref{secweyl}, during the work on my master thesis.

	\tableofcontents
	\section{Cohomogeneity one manifolds} \label{seccohom1}

We will discuss a few general facts about cohomogeneity one manifolds and fix some notation. If the reader already has a good working knowledge one can also proceed with the following chapters. We give a short version of Section 1 of \cite{gwz}, since this section is quite detailed. For further details see \cite{aa} and \cite{gwz}. We are interested in compact simply connected cohomogeneity one $G$-manifolds $M$ with quasipositive curvature and $G$ connected. It will be convenient to understand the more general case, where $M$ has finite fundamental group and $G$ is not connected. Note, that a compact quasipositively curved manifold always has finite fundamental group by the Cheeger-Gromoll Splitting Theorem. 

Let $M$ be cohomogeneity one $G$-manifold with finite fundamental group. In this case $M/G$ is diffeomorphic to an interval, whose 
boundary points correspond to the two singular orbits and all points in the interior to principal orbits. By scaling the metric we can assume, that $M/G \cong \lbrack -1,+1\rbrack$. Let $c \colon \bb R \to M$ denote a geodesic perpendicular to all orbits, which is an infinite horizontal lift of $M/G$ by the quotient map $\pi \colon M \to M/G$. By $H$ we denote the \emph{principal} isotropy group $G_{c(0)}$ and by $K^\pm$ the isotropy groups at $p_\pm = c(\pm 1)$. Note that for all $t \neq 1 \mod 2\bb Z$, we have $G_{c(t)} = H$. By the Slice Theorem $M$ is now the union of tubular neighbourhoods of the two non principal orbits $B_\pm = G\cdot p_\pm = G/K^\pm$ glued along their common boundary $G/H$, i.e.
\begin{align*}
	M = G \times_{K^-}\bb D_- \cup G \times_{K^+} \bb D_+
\end{align*} 
Here $\bb D_\pm$ denotes the normal disc to the orbit $B_{\pm}$ at $p_\pm$. Furthermore, $K^\pm/H = \partial \bb D_\pm = \bb S^{l_\pm}$ are the spheres normal to the singular orbits. $M$ is now determined by the \emph{group diagram} $H \subset \lbrace K^-,K^+\rbrace \subset G$. Conversely a group diagram with $K^\pm /H = \bb S^{l\pm}$ defines a cohomogeneity one $G$-manifold. \\
The spheres $K^\pm/H$ are in general highly ineffective and we denote their ineffective kernels by $H_\pm$. It will be convenient to consider almost effective $G$-actions instead of effective ones. \\
A non principal orbit $G/K$ is called \emph{exceptional}, if $K/H = \bb S^0$, and  \emph{singular} otherwise. We denote the collection of principal orbits by $M_0 = M\backslash (B_-\cup B_+)$.\\
The cohomogeneity one \emph{Weyl group} $W(G,M) =W$ is defined to be the stabilizer of the geodesic $c$ modulo its kernel $H$. Let $N(H)$ be the normalizer of $H$ in $G$, then $W$ is a dihedral subgroup of $N(H)/H$, generated by two unique involutions $w_\pm \in (N(H)\cap K^\pm)/H$, since $K^\pm/H$ are both spheres (cf. \cite{aa}). Furthermore $M/G = c/W$. Each of these involutions can be described as the unique element $a \in K^\pm$ modulo $H$, such that $a^2$ but not $a$ lies in H. Any non principal isotropy group is of the form $wK^\pm w$ for some $w \in N(H)$ representing an element in $W$. The isotropy groups alternate along $c$ and hence half of them are isomorphic to $K^-$ and half of them to $K^+$, if $W$ is finite. In the presence of a metric with quasipositive sectional curvature $W$ is indeed finite (\cite{gwz}, Lemma 3.1).\\
The following lemma can easily be obtained, from the fact, that $M$ is a double disc bundle. We will still give a proof of this using Morse theory of path spaces, since this will lead to an obstruction to quasipositive curvature in Section \ref{secobstructions}. 
\begin{lem}\label{cohom1_lem_connected}
	The inclusion $G/K^\pm \cong B_\pm \to M$ is $l_\mp$-connected and the inclusion $G/H \to M$ is $\min\lbrace l_-,l_+\rbrace$-connected.
\end{lem}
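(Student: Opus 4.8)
The statement is quickly obtained from the double disc bundle decomposition: collapsing the tube $G\times_{K^+}\bb D_+$ around $B_+$ identifies $M/B_+$ (up to homotopy) with the Thom space of the normal bundle of $B_-$, which has rank $l_-+1$, so $H_j(M,B_+)\cong\widetilde H_{j-l_--1}(B_-)$ vanishes for $j\le l_-$, and since $\pi_1(B_+)\to\pi_1(M)$ is onto ($G$ being connected) the relative Hurewicz theorem gives $\pi_j(M,B_+)=0$ for $j\le l_-$. As announced, I would instead give the argument through Morse theory of path spaces, since this is the route that later generalises into a curvature obstruction in Section~\ref{secobstructions}.

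Fix the singular point $p_-=c(-1)\in B_-$ and let $\mathcal P=\lbrace\gamma\in H^1([0,1],M):\gamma(0)=p_-\rbrace$. This space is contractible, and the evaluation map $\gamma\mapsto\gamma(1)$ is a fibration $\mathcal P\to M$ with fibre the based loop space $\Omega_{p_-}M$; its restriction over $B_+$ is a fibration $\mathcal P_+:=\lbrace\gamma\in\mathcal P:\gamma(1)\in B_+\rbrace\to B_+$ with the same fibre. Hence $\pi_j(M,B_+)\cong\pi_j(\mathcal P,\mathcal P_+)\cong\pi_{j-1}(\mathcal P_+)$, the last isomorphism because $\mathcal P$ is contractible, so it suffices to show that $\mathcal P_+$ is $(l_--1)$-connected; the assertion for $B_-\hookrightarrow M$ then follows by exchanging the roles of $p_+,B_-$ and $p_-,B_+$.

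On $\mathcal P_+$ one runs Morse theory for the energy $E(\gamma)=\int_0^1|\dot\gamma|^2$, which satisfies the Palais--Smale condition~(C). Its critical points are the geodesics from $p_-$ to $B_+$ meeting $B_+$ orthogonally; a geodesic orthogonal to one orbit is orthogonal to all orbits, hence horizontal, and every horizontal geodesic is a $G$-translate of $c$. Since $K^-=G_{p_-}$ acts transitively on the unit normal sphere of $B_-$ at $p_-$, the critical set of $E$ is exactly the collection of $K^-$-translates of the segments of $c$ running from $c(-1)$ to a singular point of $c$ of $B_+$-type. These group into critical submanifolds $C_0,C_1,\dots$ of strictly increasing energy, each a single $K^-$-orbit diffeomorphic to $K^-/H\cong\bb S^{l_-}$, because the pointwise stabiliser of $c$ in $K^-$ equals $H$ (here one uses $G_{c(t)}=H$ for generic $t$). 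The minimum $C_0$ consists of the minimal geodesics from $p_-$ to $B_+$ --- minimal because $\pi\colon M\to M/G$ is $1$-Lipschitz and $c|_{[-1,1]}$ projects to a minimal segment of $M/G$.

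It remains to bound the index of $C_j$, $j\ge1$, below by $l_-$, and this is the main obstacle. A representative of $C_j$ is the segment of $c$ from $c(-1)$ to the $(j+1)$-th $B_+$-type singular point in a fixed direction; its interior meets at least one singular orbit of $B_-$-type (in fact one of each type). Now each interior crossing of a singular orbit $G/K$ with normal sphere $\bb S^{l}$ forces the Morse index of $E$ to grow by at least $l$: this is the focal behaviour of the orbit, controlled by its slice representation, which contributes a focal point of multiplicity $l$ along $c$. Hence the index of $C_j$ is at least $l_-$ for all $j\ge 1$. One also has to check that the $C_j$ are nondegenerate critical submanifolds --- each being precisely a $K^-$-orbit, with transverse Hessian governed by the second variation of $E$ along horizontal variations --- after which the remaining bookkeeping is the standard Morse theory of the energy functional, as in Milnor's treatment of $\Omega\bb S^n$: $\mathcal P_+$ has the homotopy type of $C_0\simeq\bb S^{l_-}$ with cells of dimension $\ge l_-$ attached, hence is $(l_--1)$-connected, as required. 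Finally the bundle projection $G/H\to B_+$ has fibre $\bb S^{l_+}$, hence is $l_+$-connected, and likewise $G/H\to B_-$ is $l_-$-connected; composing with the two inclusions already proved, $G/H\to M$ is $\min\lbrace l_-,l_+\rbrace$-connected. It is exactly the focal estimate above that, once paired with a curvature bound, yields the obstruction of Section~\ref{secobstructions}, which is why this route is preferred to the quick one.
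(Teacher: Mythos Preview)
Your route is the same Morse theory on path spaces that the paper uses, but with a different path space: you take $\mathcal P_+=\Omega(p_-,B_+)$, whereas the paper uses $\Omega_{B_-}(M)$, the space of paths starting and ending in $B_-$. In the paper's setup the minimum of $E$ is $B_-$ itself (constant paths), so there is no Morse--Bott bookkeeping at the bottom level, and every nonconstant critical geodesic already crosses $B_+$ at least once in its interior with index exactly $l_+$ for the shortest one. This choice also lines up with the Connectedness Lemma, which is why the paper's index computation feeds directly into Lemma~\ref{obstructions_lem_totgeod}: one compares the curvature lower bound $n-2(l_-+1)+1$ against the exact index $l_+$ of the shortest geodesic in $\Omega_{B_-}(M)$. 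Your path space does not match that obstruction as cleanly.

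There is a genuine gap in your index estimate. You assert that each interior crossing of a singular orbit $G/K$ with normal sphere $\bb S^l$ contributes $l$ to the Morse index, citing ``the focal behaviour of the orbit, controlled by its slice representation''. But the index of $E$ on $\mathcal P_+$ is governed by focal points of the \emph{endpoint} submanifold $B_+$, not by focal behaviour of the orbit being crossed; the slice representation of the crossed orbit does not by itself produce Jacobi fields vanishing at $p_-$. The missing step---which is exactly what the paper carries out---is the identification of the Lagrangian of $B_+$-Jacobi fields along $c$ with the $(n-1)$-dimensional space of action fields $\{X^*:X\in\fr m\}$: a Killing field is tangent to every orbit and automatically satisfies the second-variation boundary condition there, and a dimension count forces equality. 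Once this is in place, $X^*$ vanishes at $c(t_0)$ precisely when $X$ lies in the isotropy algebra at $c(t_0)$, so the $B_+$-focal points along $\gamma_j$ are exactly the interior singular crossings with multiplicities $l_\pm$, and the Morse index theorem gives $\lambda_j=j(l_-+l_+)\ge l_-$ for $j\ge1$. With this identification your argument is complete.
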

\begin{proof}
	We remember, that by definition a continuous map $f \colon X \to Y$ is \emph{$l$-connected}, if the map $f_i \colon \pi_i(X) \to \pi_i(Y)$ between homotopy groups is an isomorphism for $i < l$ and surjective for $i=l$. 
	
	Let $B=B_-$ be a non principal orbit. Consider the energy functional 
	\begin{align*}
		E: c \mapsto \frac{1}{2}\int_0^1 \norm{\dot c}^2dt
	\end{align*}
	on the space $\Omega_B(M)$ of all piecewise smooth curves starting and ending in $B$. We can embed $B$ into $\Omega_B(M)$ as the point curves. It is now possible to determine the topology of this space using Morse theory as in the proof of the Connectedness Lemma (cf. \cite{w1}). 
	We claim that the inclusion $B \to \Omega_B(M)$ is $(l_+-1)$-connected:
	The critical points of the energy functional are the geodesics starting and ending perpendicularly to $B$. We want to show that the index of such a geodesic $c$, which is the dimension of the space on which $\Hess E$ is negative definite, is at least $l_+$. We consider the Hessian of the energy functional along $c$ up to a time $t$ and denote it by $\Hess E_t$. As long as $c$ is minimal the index is $0$ and raises by the dimension of the nullspace of $\Hess E_t$, which is also called nullity, after $c$ encountered a focal point (cf. \cite{m}). To compute the nullity of $\Hess E_t$, we have to consider the space $W$ of all Jacobi fields orthogonal to $c$, that start and end tangent to $B$, such that $J^\prime(0) + S_{\dot c(0)}(J(0))$ is orthogonal to $B$. Here $S_{\dot c(0)}$ is the shape operator of $B$ given by the second fundamental form in direction $\dot c$. It is easy to see that this space has dimension $n-1$. Now let $H$ be the principal isotropy group of the action along $c$. Then $\fr g = \fr h \oplus \fr m$, where $\fr m$ is the orthogonal complement of the Lie algebra $\fr h$ of $H$.  The space $\fr m$ is $(n-1)$-dimensional and induces a family of Jacobi fields in the following way. For $X \in \fr m$, we define $X^*(p) \coloneqq d/dt\vert_{t=0}\exp(tX)\cdot p$. Since these fields are induced by isometries, they are Killing fields and hence Jacobi fields along $c$, fulfilling the desired conditions at $B$. Therefore $W = \lbrace X^* \vert X \in \fr m \rbrace$. Since the geodesic is minimal until it reaches the second non principal orbit we have to compute the nullity at that point. Let $\fr k^\pm$ be the Lie algebra of $K^\pm$. Then all action fields $X^*$ with $X \in \fr m \cap \fr k^+$ vanish at the second non principal orbit. This is a vector space of dimension $l_+$. Since the next focal point lies in $B$, we have that each geodesic has index at least $l_+$ (and exactly $l_+$ if it intersect $B_+$ only once). Therefore $\Omega_B(M)$ can be obtained from $B$ by attaching cells of dimension at least $l_+$. Hence $B \to \Omega_B(M)$ is $(l_+-1)$-connected. \\
	It is well known that $\pi_i(M,B) = \pi_{i-1}(\Omega_B(M),B)$ (cf. \cite{w1}). Therefore $\pi_i(M,B) = 0$ for $i = 1, \ldots, l_+$. By the long exact sequence for homotopy groups $B \to M$ is $l_+$-connected. Hence the first part of the lemma is proven. The second part can be proven in the same way, by noting that a geodesic orthogonal to $G/H$ either intersects $B_+$ first and then $G/H$ again or $B_-$ first and then $G/H$ again.
\end{proof}

	We now proceed with some conclusions from the above lemma (cf. \cite{gwz}). 
	\begin{lem}[\cite{gwz}, (1.5)]\label{cohom1_lem_weylindex}
		$M$ has at most one exceptional orbit and the Weyl group of $G_0$ has at most index $2$ in the Weyl group of $G$.
	\end{lem}
	\begin{lem}[\cite{gwz}, Lemma 1.6]
		Assume $G$ acts by cohomogeneity one with $M$ simply connected and $G$ connected. Then:
		\begin{enumerate}[label = (\alph*), topsep=0pt,itemsep=-1ex,partopsep=1ex,parsep=1ex]
			\item  There are no exceptional orbits.
			\item  If both $l_\pm\ge2$, then $K^\pm$ and $H$ are connected.
			\item If $l_- =1$, and $l_+\ge 2$, then $K^- = H \cdot S^1=H_0\cdot S^1$, $H=H_0\cdot \bb Z_k$ and $K^+ = K_0^+\cdot \bb Z_k$.
		\end{enumerate}
	\end{lem}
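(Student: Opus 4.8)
The plan is to deduce all three statements from Lemma~\ref{cohom1_lem_connected} and the double-cover construction recalled just before the lemma, together with the long exact homotopy sequences of the fibre bundles $H\to G\to G/H$, $K^\pm\to G\to G/K^\pm$, $K^\pm/H\to G/H\to G/K^\pm$ and $H\to K^\pm\to K^\pm/H$. For~(a), suppose some orbit, say $B_-$, were exceptional, so that $l_-=0$ and $K^-/H=\bb S^0$. Since $G$ is connected, $G/H$ is connected, hence the double cover $G/H\to B_-$ is connected and the normal line bundle of $B_-$ in $M$ is nontrivial. The construction recalled before the lemma then produces a nontrivial twofold cover $M'\to M$; as not both $l_\pm$ vanish, $M'$ is again not an $\bb S^1$-bundle, so the construction iterates and gives subgroups of $\pi_1(M)$ of arbitrarily large index, contradicting $|\pi_1(M)|<\infty$. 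Hence $l_\pm\ge 1$ on both sides.

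For~(b), assume $l_\pm\ge 2$. By Lemma~\ref{cohom1_lem_connected} the inclusions $G/H\hookrightarrow M$ and $G/K^\pm\hookrightarrow M$ are at least $2$-connected, so $G/H$, $G/K^-$ and $G/K^+$ are simply connected. The segment $\pi_1(G/H)\to\pi_0(H)\to\pi_0(G)$ of the homotopy sequence of $H\to G\to G/H$ exhibits $\pi_0(H)$ as a quotient of $\pi_1(G/H)=1$, using $\pi_0(G)=1$; hence $H$ is connected, and the same argument for $K^\pm\to G\to G/K^\pm$ shows that $K^\pm$ is connected.

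For~(c), let $l_-=1$ and $l_+\ge 2$. As in~(b), $G/K^-\hookrightarrow M$ is at least $2$-connected, so $G/K^-$ is simply connected and $K^-$ is connected. Since $\dim K^-/H=l_-=1$, the homogeneous space $K^-/H_0$ is a compact connected $1$-manifold, hence a circle, on which $K^-$ acts transitively; after averaging a metric this action factors through $\SO(2)$, and since the point stabiliser $H_0$ is connected while the stabilisers of a transitive $\SO(2)$-action on a circle are finite, $H_0$ is the kernel of the action. Thus $H_0$ is normal in $K^-$ and $K^-/H_0\cong\SO(2)$; a maximal torus of $K^-$ surjects onto $K^-/H_0$ and therefore contains a circle subgroup $S^1$ with $K^-=H_0\cdot S^1=H\cdot S^1$. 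The circle bundle $\bb S^1=K^-/H\to G/H\to G/K^-$ over the simply connected base $G/K^-$ makes $\pi_1(G/H)$ a quotient of $\pi_1(\bb S^1)$, hence cyclic; then $\pi_0(H)$, a quotient of $\pi_1(G/H)$ via the sequence of $H\to G\to G/H$, is cyclic, and since $H$ is compact this gives $H/H_0\cong\bb Z_k$ for some $k\ge 1$. Finally, since $l_+\ge 2$, the homotopy sequence of $H\to K^+\to K^+/H=\bb S^{l_+}$ yields an isomorphism $\pi_0(H)\xrightarrow{\ \sim\ }\pi_0(K^+)$ induced by the inclusion, so a generator of the subgroup $\bb Z_k\subset H$ also generates $\pi_0(K^+)$, whence $K^+=K_0^+\cdot\bb Z_k$.

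The homotopy-sequence manipulations are routine; the part requiring the most care is the Lie-theoretic structure claim in~(c) — that $K^-/H_0$ is genuinely a circle group and that this circle lifts to an honest subgroup $S^1\subset K^-$ — together with making the iterated covering argument in~(a) precise.
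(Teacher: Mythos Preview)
Your proof is correct and follows essentially the same approach as the paper, which sketches the argument in the paragraph preceding the lemma: the covering construction rules out exceptional orbits, and the connectedness of the orbits together with the long exact homotopy sequences of the relevant fibrations yields (b) and (c). Your treatment of (c) in particular spells out carefully why $H_0$ is normal in $K^-$ and why the circle lifts to a subgroup, which the paper leaves implicit.

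One small correction in (a): the iteration you invoke does not actually occur. Once you pass to the double cover $M'\to M$, the new group diagram is $H\subset\{K^+,\,w_-K^+w_-\}\subset G$, so both non-principal orbits of $M'$ have normal sphere $\bb S^{l_+}$ with $l_+\ge 1$, and there is nothing left to unfold. This does not matter, however, because you have already produced a single nontrivial double cover of the simply connected manifold $M$, which is an immediate contradiction; you can simply drop the sentence about iterating and appeal directly to $\pi_1(M)=1$.
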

	Coverings of cohomogeneity one manifolds can be obtained by adding components to the isotropy groups in a suitable way (see \cite{gwz}, Lemma 1.7) or if both non principal isotropy groups are conjugate to each other by an order two element in $N(H)/H$.\\
	
	We will now take at look at fixed point components of subactions. Let $L \subset K^\pm$ be a subgroup, that is not conjugated to any subgroup of $H$. Then no component of the fixed point set $M^L$ intersects the regular part. Hence all components of $M^L$ are contained in some singular orbit and are therefore homogeneous. The component containing $p_\pm$ is equal to $N(L)_0/(N(L)_0 \cap K^\pm)$. This is in particular useful, if $K^\pm$ contains a central involution of $G$, that is not contained in $H$, since then $G/K^\pm$ is a fixed point component of $\iota$.
	
	Let $L$ be conjugate to a subgroup of $H$. Then the component of $M^L$ intersecting the regular part of $M$ is a cohomogeneity one manifold by the action of $N(L)$. We will consider the fixed point component $M^L_c$ of $L$, which contains the geodesic $c$, together with its stabilizer subgroup $N(L)_c \subset N(L)$. In general the length of $M^L/N(L)_c$ is a multiple of the length of $c/W$. They coincide, if both $N(L) \cap K^\pm$ act non trivially on the normal spheres of $M^L_c \cap K^\pm$ at $p_\pm$. In this case the group diagram of $M^L_c$ is given by $N(L)_c \cap H \subset \lbrace N(L)_c \cap K^-, N(L)_c \cap K^+ \rbrace \subset N(L)_c $. The Weyl groups of $(M,G)$ and $(M^L_c, N(L)_c)$ might be different. By \cite{gwz} Lemma 1.8, the Weyl groups coincide, if $L$ is equal to $H$ or to a maximal torus of $H_0$.
		In the case $L = H$ we call $M_c^H$, the \emph{core} of $M$ and $N(H)_c$ the \emph{core group}. In Section \ref{secweyl}, we will determine, which possible core groups can occur, if the curvature of $M$ is quasipositive.\\

	We conclude this section with the equivalence of group diagrams. Two group diagrams $H\subset \lbrace K^-,K^+\rbrace \subset G$ and $\bar H \subset \lbrace \bar K^-, \bar K^+ \rbrace \subset G$ describe  the same cohomogeneity one manifold up to $G$-equivariant diffeomorphism, if and only if after possibly switching $K^-$ and $K^+$, the following holds: There exist $b \in G$ and $a \in N(H)_0$, such that $K^- = b\bar K^-b^{-1}$, $H = b \bar H b^{-1}$ and $K^+ = ab \bar K^+ b^{-1}a^{-1}$ (cf. \cite{aa}, \cite{gwz}).%

	\section{Obstructions to quasipositive curvature}\label{secobstructions}

In this section we will discuss some obstructions for cohomogeneity one manifolds to have an invariant metric of quasipositive curvature.  As mentioned before there is a number of obstructions for positively curved metrics that are still valid in the case of quasipositive curvature but the proofs do not carry over. Hence we have to give new proofs of these results, which will be done in Sections \ref{secweyl} and \ref{secblock}. For the entire section let $M$ be a cohomogeneity one $G$-manifold with group diagram $H \subset \lbrace K^-,K^+\rbrace \subset G$ unless  stated otherwise. We start with a result that expresses in two ways that the representation of the group diagram is in some sense maximal. The first one is \emph{primitivity}, which is defined below. The second one, we call \emph{linear primitivity} and it states, that the isotropy groups along a horizontal geodesic generate the Lie algebra $\fr g$ of $G$.

\begin{dfn}[\cite{aa}, p.17]
	A $G$-manifold $M$ is called \emph{non-primitive} if there is a $G$-equi\-va\-ri\-ant map $M \to G/L$ for some proper subgroup $L \subset G$. Otherwise we say that $M$ is \emph{primitive}.
\end{dfn} 

	In the case of cohomogeneity one manifolds non primitivity means that for some metric on $M$ the non principal isotropy groups $K^\pm$ generate a proper subgroup $L \subset G$. Hence the action is primitive, if $K^-$ and $n K^+ n^{-1}$ generate $G$ for all $n \in N(H)_0$. Cohomogeneity one actions on quasipositively curved manifolds fulfil both properties:
\begin{lem}[Primitivity Lemma]\label{obstructions_lem_primitivity}
	Let $M$ have a metric with quasipositive curvature and let $c \colon \bb R \to M$ be a horizontal geodesic. Then:
	\begin{enumerate}[label = (\alph*), topsep=0pt,itemsep=-1ex,partopsep=1ex,parsep=1ex]
		\item \emph{(Linear Primitivity)} The Lie algebras of the isotropy groups along $c$ generate $\fr g$ as a vectorspace, i.e $\fr g = \sum_{w\in W} w\fr k^-w + w\fr k^+w$, where $\fr k^\pm$ are the Lie algebras of $K^\pm$. 
		\item  \emph{(Lower Weyl Group Bound)} The Weyl group is finite and $\vert W\vert \ge 2 \cdot \dim(G/H)/(l_-+l_+)$
		\item \emph{(Primitivity)} Any of the singular isotropy groups $K^\pm$ together with any conjugate of the other by an element of the core group generate $G$ as a group. In particular this is true for conjugation by elements of $N(H)_0$.
	\end{enumerate}
\end{lem}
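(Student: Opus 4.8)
The plan is to prove the three parts of the Primitivity Lemma together, since they are closely linked, with linear primitivity as the heart of the argument.

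\textbf{Linear Primitivity.} Let $\mathfrak{a} \subset \mathfrak{g}$ be the subspace spanned by $\{w\mathfrak{k}^-w, w\mathfrak{k}^+w : w \in W\}$; concretely $\mathfrak{a} = \sum_t \mathfrak{g}_{c(t)}$, the span of all isotropy subalgebras along the horizontal geodesic $c$. I would argue that if $\mathfrak{a} \neq \mathfrak{g}$, then the action fields $X^*$ for $X$ in a complement of $\mathfrak{a}$ produce a nontrivial parallel family along $c$, forcing zero curvature at \emph{every} point of $c$, contradicting quasipositivity. In more detail: for $X \in \mathfrak{g}$, the action field $X^*$ is a Jacobi field along $c$, and $X^*(c(t)) \perp \dot c(t)$ means $X$ is perpendicular (in the appropriate sense) to $\mathfrak{g}_{c(t)}$, i.e. $X^*$ vanishes nowhere transversally in the relevant directions. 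The key computation is that along a horizontal geodesic the tangential part of $X^*$ is controlled by the isotropy algebras: writing $\mathfrak{g} = \mathfrak{g}_{c(t)} \oplus \mathfrak{m}_t$, the component of $X^*(c(t))$ in the vertical space is an isomorphic image of the $\mathfrak{m}_t$-part of $X$. If $X \perp \mathfrak{a}$, then $X \in \mathfrak{m}_t$ for all $t$, and one shows $X^*$ restricted to $c$ is a Jacobi field with $\langle X^*, \dot c\rangle \equiv 0$ which lies in the nullity of the curvature operator, i.e. $R(X^*, \dot c)\dot c = 0$ along all of $c$; since $c$ passes through (a dense set of points, in particular) the point of positive curvature, this is the desired contradiction. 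The hardest point is making precise the step "$X^* $ lies in the nullity along all of $c$" — this should follow from the second variation / O'Neill-type identities for the horizontal geodesic together with the fact that the isotropy algebras generated by $W$ exhaust the tangent directions, and I expect this is exactly where the quasipositivity (as opposed to mere nonnegativity) is used, via the strong maximum principle / rigidity for the index form.

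\textbf{Lower Weyl Group Bound.} Finiteness of $W$ is quoted from \cite{gwz}, Lemma 3.1. For the inequality: the isotropy algebra at $c(t)$ is $w\mathfrak{k}^\pm w$ with $\dim w\mathfrak{k}^\pm w = \dim \mathfrak{k}^\pm = \dim\mathfrak{h} + l_\mp$ — wait, more simply $\dim(\mathfrak{k}^\pm/\mathfrak{h}) = l_\pm$, so $\dim w\mathfrak{k}^\pm w - \dim\mathfrak{h} = l_\pm$. Since the $|W|$ conjugates together with $\mathfrak{h}$ must span $\mathfrak{g}$ by part (a), and each contributes at most $l_\pm$ new dimensions beyond $\mathfrak{h}$, with $|W|/2$ conjugates of each type (by the alternating property along $c$), we get $\dim\mathfrak{g} - \dim\mathfrak{h} \le \tfrac{|W|}{2}(l_- + l_+)$, i.e. $\dim(G/H) \le \tfrac{|W|}{2}(l_-+l_+)$, which rearranges to the claimed bound. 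A small amount of care is needed when $W$ has odd order or the two singular types are not perfectly balanced, but the half-and-half statement recalled in Section~\ref{seccohom1} handles this.

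\textbf{Primitivity.} This is the global (group, not Lie algebra) upgrade. Suppose $K^-$ and $nK^+n^{-1}$ generate a proper subgroup $L$ for some $n$ in the core group $N(H)_c$. Replacing the group diagram by the equivalent one (using $a = n$ in the equivalence of group diagrams recalled at the end of Section~\ref{seccohom1}), we get a cohomogeneity one manifold on which the subgroup $L$ already acts with the same orbits, hence with cohomogeneity one, so $M$ is non-primitive, i.e. there is an equivariant map $M \to G/L$. But then by part (a) applied inside $L$ (its isotropy algebras along $c$ are the same subalgebras $w\mathfrak{k}^\pm w$), those subalgebras span $\mathfrak{l}$, yet by (a) applied to $G$ they span $\mathfrak{g}$; hence $\mathfrak{l} = \mathfrak{g}$, so $L$ has finite index in $G$, and since $G$ is connected and $L$ is closed (generated by closed subgroups containing $H$, hence $L$ is a closed subgroup — this needs a word, as a subgroup generated by two closed subgroups need not be closed, but here $L$ acting with cohomogeneity one on compact $M$ forces $\bar L = L$) we conclude $L = G$. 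I expect the only genuinely delicate point here is the closedness of the generated subgroup, which is standard for isometric actions on compact manifolds: the closure $\bar L$ still acts with the same orbits, so one may assume $L$ closed from the outset.
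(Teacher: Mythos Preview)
Your treatment of (a) and (b) is essentially aligned with the paper: the paper defers (a) to Wilking's transversal Jacobi field result (\cite{w3}, Corollary~10), which is the rigorous version of the heuristic you sketch, and obtains (b) from (a) by the same dimension count you give.

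The genuine gap is in your argument for (c). You want to pass to the equivalent diagram $H\subset\{K^-,nK^+n^{-1}\}\subset G$, transport the quasipositive metric across the resulting $G$-equivariant diffeomorphism, and then invoke (a). But precisely because that diffeomorphism is $G$-equivariant, the isotropy groups along the transported horizontal geodesic are \emph{exactly} $K^-$ and $K^+$ again, not $K^-$ and $nK^+n^{-1}$: the group diagram depends on the horizontal geodesic, and horizontal geodesics are determined by the metric up to a single global $G$-translation. An abstract equivalence of diagrams does not hand you a quasipositive metric whose minimal horizontal geodesic realizes the conjugated isotropy group. Your alternative phrasing, applying (a) to the $L$-action on the fiber of $M\to G/L$, fails for the same reason: that fiber need not be totally geodesic, so there is no quasipositive metric on it to which (a) applies. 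A secondary issue is that the diagram equivalence at the end of Section~\ref{seccohom1} requires $n\in N(H)_0$, so even had this worked it would not reach the full core group $N(H)_c$.

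The paper's route, deferred to Section~\ref{secweyl}, is to \emph{perturb} the original metric in a tubular neighbourhood of a principal orbit with positive curvature, preserving quasipositivity, so that the new minimal horizontal geodesic joins $p_-$ to $g\cdot p_+$ for every $g$ in a neighbourhood $U\subset N(H)_0$ of $e$; for each such $g$, part (a) applied to the perturbed metric yields linear primitivity for $\fr k^-$ and $g\fr k^+g^{-1}$, hence $K^-$ and $gK^+g^{-1}$ generate $G$. Passing from the neighbourhood $U$ to all of $N(H)_0$, and then to $N(H)_c$, requires the analyticity and Weyl-group arguments developed in the proof of the Core-Weyl Lemma --- this is why the paper singles out (c) as needing the machinery of that section.
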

The proof of the results (a) and (b) carry over from the positive curvature case (cf. \cite{gwz}, Lemma 2.2), since it purely relies on \cite{w3} Corollary 10 and only needs the geodesic $c$ to pass a point, where all sectional curvatures are positive. Result (c) requires a different and more difficult proof, which will be given in Section \ref{secweyl}. The following lemma is a direct consequence of linear primitivity.
\begin{lem}[Isotropy Lemma, \cite{gwz} Lemma 2.3]\label{obstructons_lem_isotropy}
	Suppose $H$ is non trivial. Then:
	\begin{enumerate}[label = (\alph*), topsep=0pt,itemsep=-1ex,partopsep=1ex,parsep=1ex]
		\item Any irreducible subrepresentation of the isotropy representation $G/H$ is equivalent to a subrepresentation of the isotropy representation of one of $K/H$, where $K$ is a non principal isotropy group.
		\item The isotropy representation of $G/H_0$ is spherical, i.e. $H_0$ acts transitively on the unit sphere of each $k$-dimensional irreducible subrepresentation if $k>1$.
	\end{enumerate}
\end{lem}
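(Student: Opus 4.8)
The plan is to derive both statements directly from Linear Primitivity, i.e.\ from Lemma~\ref{obstructions_lem_primitivity}(a). Choose an $\Ad(H)$-invariant complement $\fr m$ of $\fr h$ in $\fr g$, so that the isotropy representation of $G/H$ is the $H$-action on $\fr m$ (equivalently, work with the quotient $H$-module $\fr g/\fr h$). Every isotropy group $G_{c(t)}$ along the horizontal geodesic $c$ contains $H$ and is normalized by $H$, so $\fr m_t\coloneqq\fr g_{c(t)}\cap\fr m$ is an $H$-submodule of $\fr m$ isomorphic to $\fr g_{c(t)}/\fr h$, which in turn is the isotropy representation of the homogeneous space $G_{c(t)}/H$. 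For regular $t$ this is trivial, and for singular $t$ it is the isotropy representation of a round sphere $K/H=\bb S^{l_\pm}$ with $K=G_{c(t)}$ a non principal isotropy group; the $G_{c(t)}$ occurring are exactly the $wK^\pm w$, $w\in W$. Linear Primitivity says precisely that $\fr m=\sum_t\fr m_t$.

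For (a), let $V\subseteq\fr m$ be an irreducible $H$-submodule and let $p_V$ be the $H$-equivariant projection of $\fr m$ onto its $V$-isotypical component. Since $p_V(\fr m)=\sum_t p_V(\fr m_t)$ and $V\subseteq\fr m$ forces $p_V(\fr m)\neq 0$, we get $p_V(\fr m_t)\neq 0$ for some singular $t$; hence $\fr m_t\cong\fr k/\fr h$ contains a submodule equivalent to $V$, with $K=G_{c(t)}$ non principal. This is the assertion.

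For (b), the same argument applied to $H_0$ shows that every $H_0$-irreducible submodule $W\subseteq\fr m$ with $\dim W>1$ is, up to equivalence, a submodule of $\fr k/\fr h$ for some non principal $K$ with $K/H\cong\bb S^l$. It therefore suffices to know that the isotropy representation of a homogeneous sphere is \emph{spherical}: decomposed under the identity component of the isotropy group it is a sum of submodules that are one-dimensional or act transitively on their unit sphere. This is a routine check against the classification of transitive actions on spheres ($\SO$, $\U$, $\SU$, $\Sp$, $\Sp\cdot\Sp(1)$, $\Sp\cdot\U(1)$, $G_2$, $\Spin(7)$, $\Spin(9)$): in each case the effective isotropy group is connected, and the tangent representation at a point visibly splits off a (possibly trivial) line and otherwise consists of standard representations of $\SO(m)$, $\U(m)$, $\Sp(m)$, the $\SO(3)$-representation of an $\Sp(1)$- or $\U(1)$-factor, and the vector and spin representations of $\Spin(7)$, all of which are transitive on their unit spheres. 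Since $\fr k/\fr h$ factors through the effective action $K/H_\pm$ and $W$ embeds $H_0$-irreducibly into it, $W$ is equivalent to one of these summands, so $H_0$ acts transitively on its unit sphere.

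There is no essential difficulty here beyond invoking the two inputs (Linear Primitivity and the sphere classification); the only point that needs a little care is to decompose $\fr k/\fr h$ as an $H_0$-module rather than as an $H$-module before applying the classification, using that the effective isotropy group of a homogeneous sphere has connected identity component, so that $H_0$ already realizes the full transitivity.
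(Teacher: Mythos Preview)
Your proof is correct and follows exactly the approach the paper indicates: the paper states only that the lemma ``is a direct consequence of statement (a) of the above lemma'' (i.e.\ Linear Primitivity), and you have correctly spelled out how both parts follow from $\fr m=\sum_t\fr m_t$ together with the classification of homogeneous spheres in Table~\ref{appendix_table_homsphere}. One cosmetic remark: in your final paragraph the phrase ``the effective isotropy group of a homogeneous sphere has connected identity component'' is tautological; you mean that the effective isotropy group \emph{is} connected, which is what forces $H_0$ to surject onto it and hence realize the full transitivity.
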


By part (a) any subrepresentation of $G/H$ is weakly equivalent, i.e. equivalent up to an automorphism of $H$, to a subrepresentation of $K^-/H$ or $K^+/H$. We say that a representation has to \emph{degenerate} in $K^-/H$ or $K^+/H$. In Table \ref{appendix_table_homsphere} in the appendix homogeneous spheres together with their isotropy representation are collected.\\
Table \ref{appendix_table_spherical} in the appendix contains a list of simple groups with their spherical simple subgroups, that was originally given in \cite{w2}. Another useful consequence is the following lemma.
\begin{lem}[\cite{gwz} Lemma 2.4]\label{obstructions_lem_simple}
	If $G$ is simple, $H$ can have at most one simple normal subgroup of rank at least two.
\end{lem}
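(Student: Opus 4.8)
The plan is to argue by contradiction. Suppose $G$ is simple and $H$ contains two simple normal subgroups $H_1, H_2$, both of rank at least two. By the Isotropy Lemma (Lemma~\ref{obstructons_lem_isotropy}(b)), the isotropy representation of $G/H_0$ is spherical, so $H_0$ acts transitively on the unit sphere of every irreducible summand of $\fr m$ of dimension greater than one. The first step is to decompose $\fr m$ as an $H_0$-representation and analyze how the factors $H_1$ and $H_2$ can act on each irreducible summand: since $H_i$ is simple of rank $\ge 2$, it is in particular non-abelian, and on a spherical representation each simple normal subgroup of $H_0$ must either act trivially or act already transitively on the relevant sphere (because the isotropy group of a point in a homogeneous sphere cannot contain a full simple factor of the acting group unless that factor acts trivially). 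Consulting the list of homogeneous spheres and their isotropy representations in Table~\ref{appendix_table_homsphere}, one sees that a simple group of rank $\ge 2$ acting transitively and almost effectively on a sphere forces the sphere (and hence the irreducible summand) to be rather large and rigid.

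The key step is then a counting/rank argument. Each of $H_1$ and $H_2$ must act nontrivially — hence transitively — on some irreducible summand $V_i \subset \fr m$, and these summands are distinct because a transitive action of $H_1 \times H_2$ on a sphere with both factors acting nontrivially does not appear among the homogeneous spheres when both factors are simple of rank $\ge 2$ (again by Table~\ref{appendix_table_homsphere}; the only products of simple groups acting transitively on spheres have a very restricted form, e.g.\ $\SO(k)\times\SO(2)$, $\Sp(k)\times\Sp(1)$, etc., none with two rank-$\ge 2$ simple factors). So $\dim \fr m \ge \dim V_1 + \dim V_2$, and on $V_i$ the group $H_i$ acts as one of the entries of the sphere table. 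At the same time $\fr g = \fr h \oplus \fr m$, so $\dim G = \dim H + \dim \fr m \ge \dim H_1 + \dim H_2 + \dim V_1 + \dim V_2$, and $\rk G \ge \rk H_1 + \rk H_2 \ge 4$. Now I would invoke the structure of simple Lie groups: the standard list of simple $G$ together with the constraint $\rk G \ge \rk H_1 + \rk H_2$ and the dimension bound above leaves only finitely many candidate pairs $(G, H_1\times H_2)$, each of which can be ruled out by checking that the corresponding embedding either forces $\fr m$ to contain a summand on which some $H_i$ does not act spherically, or forces $H_1\times H_2$ itself to act non-spherically on a summand of $\fr m$ that it cannot avoid, contradicting Lemma~\ref{obstructons_lem_isotropy}(b).

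I expect the main obstacle to be the bookkeeping in the last step: one must make sure that in every candidate embedding $H_1 \times H_2 \hookrightarrow G$ there is genuinely an irreducible summand of $\fr m = \fr g/\fr h$ witnessing the non-spherical behavior, rather than merely a representation that happens to decompose favorably. The cleanest way to handle this is probably to note that the adjoint representation of $G$, restricted to $H_1 \times H_2$ and quotiented by $\fr h$, always contains the "mixed" summand $V_1 \otimes V_2$ (or a summand on which both factors act) coming from the off-diagonal blocks of $\fr g$ relative to the subalgebra $\fr h_1 \oplus \fr h_2$; such a tensor-product summand is never a spherical representation once both $V_i$ are nontrivial and $H_i$ has rank $\ge 2$, which immediately yields the contradiction. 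This reduces the whole argument to verifying that the off-diagonal block is nonzero, which follows from simplicity of $G$ (otherwise $\fr h_1 \oplus \fr h_2 \oplus (\text{its centralizer})$ would split $\fr g$).
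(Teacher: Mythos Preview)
The paper does not give its own proof of this lemma; it simply records it as a consequence of the Isotropy Lemma and cites \cite{gwz}. Your outline follows the same route that \cite{gwz} takes: use part (b) of the Isotropy Lemma to force every irreducible summand of $\fr m$ to be a spherical $H$-representation, observe from Table~\ref{appendix_table_homsphere} that no transitive sphere action has two simple factors of rank at least two acting nontrivially, and conclude that on each summand at most one of $H_1,H_2$ is nontrivial. Your final reduction --- showing that simplicity of $G$ forces a summand on which \emph{both} $H_1$ and $H_2$ act nontrivially --- is exactly the right closing move.

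There is, however, a genuine imprecision in your justification of that last step. Writing $\fr g = \fr g_{00}\oplus\fr g_{10}\oplus\fr g_{01}$ for the $H_1\times H_2$-isotypical pieces (both trivial, only $H_1$ nontrivial, only $H_2$ nontrivial), your claim that ``$\fr h_1\oplus\fr h_2\oplus(\text{its centralizer})$ would split $\fr g$'' is not literally correct: $\fr g_{10}$ can be strictly larger than $\fr h_1$, so those three pieces need not exhaust $\fr g$, and the centralizer is a subalgebra rather than an ideal. The clean argument is this: the absence of a mixed summand gives $[\fr g_{10},\fr g_{01}]=0$ (by $H_i$-equivariance of the bracket, the image would have to be both $H_1$- and $H_2$-nontrivial), and then one checks that the subalgebra $\fr l_1$ generated by $\fr g_{10}$ is in fact an ideal of $\fr g$: it is normalized by $\fr g_{00}$, contained in the subalgebra $\fr g_{00}\oplus\fr g_{10}$, and centralized by $\fr g_{01}$. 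Since $\fr h_1\subset\fr l_1\subset\fr g_{00}\oplus\fr g_{10}\subsetneq\fr g$, this contradicts simplicity. Once you replace your splitting sentence with this ideal argument, the proof is complete and matches the intended one. The intermediate ``counting/rank'' case analysis you sketch is unnecessary.
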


One of the most important tools for classifying positively curved cohomogeneity one manifolds is the \emph{Rank Lemma}, which states:
\begin{lem}[Rank Lemma]\label{obstructions_lem_rank}
	One of $K^\pm$ has corank $0$, if $M$ is even dimensional, and at most corank $1$, if $M$ is odd dimensional. In particular $H$ has corank $1$, if $M$ is even dimensional, and corank $0$ or $2$, if $M$ is odd dimensional.
\end{lem}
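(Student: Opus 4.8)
The plan is to reduce the ``in particular'' clause to the first statement by elementary Lie theory, and then to prove the first statement by replacing Berger's fixed point theorem — the engine of the argument in positive curvature — with an argument built around the closed horizontal geodesic $c$ and its stabilizer.

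For the reduction, recall two facts. First, for a homogeneous sphere $K/H \cong \bb S^{l}$ with $l\ge 1$ one always has $\rk K - \rk H \in \lbrace 0,1\rbrace$, the difference being $1$ exactly when $l$ is odd; this can be read off Table~\ref{appendix_table_homsphere}, or deduced from $\chi(\bb S^{l})=0 \iff l\text{ odd} \iff \rk K > \rk H$. Second, $\dim G \equiv \rk G$ and $\dim H \equiv \rk H \pmod 2$, so from $\dim G/H = \dim M - 1$ we get $\cork H \equiv \dim M - 1 \pmod 2$. Since $H\subseteq K^\pm$ and $K^\pm/H$ is a sphere, $\cork K^\pm \in \lbrace \cork H - 1,\ \cork H\rbrace$. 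Hence, if $\dim M$ is even and one of $K^\pm$ has corank $0$, then $\cork H \le 1$ and, being odd, $\cork H = 1$; if $\dim M$ is odd and one of $K^\pm$ has corank $\le 1$, then $\cork H \le 2$ and, being even, $\cork H \in \lbrace 0,2\rbrace$. So it suffices to show, writing $\epsilon = 0$ if $\dim M$ is even and $\epsilon = 1$ if $\dim M$ is odd, that one of $K^\pm$ has corank $\le \epsilon$; equivalently, that some corank-$\epsilon$ torus of $G$ fixes a point lying on a singular orbit.

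In positive curvature this is essentially immediate: for $\dim M$ even, a maximal torus $T\subset G$ has a fixed point by Berger's theorem, and such a point cannot be regular (a regular point has isotropy conjugate to $H$, forcing $\cork H = 0$, contrary to the parity above), hence lies on a singular orbit, so $K^-$ or $K^+$ has full rank; for $\dim M$ odd one argues along the zero set of a Killing field, which in positive curvature is a totally geodesic, positively curved submanifold carrying an induced cohomogeneity one action. Neither argument survives under only quasipositive curvature: $M^T$ may be empty, and zero sets of Killing fields are only nonnegatively curved, so the fixed-point input and the induction both collapse. The plan is therefore to use the Weyl group. Since $W$ is finite (\cite{gwz}, Lemma~3.1), $c$ is a closed geodesic; after replacing $c$ by $g\cdot c$ for a suitable $g\in G$ — possible since the image of $c$ meets every orbit — we may assume the quasipositively curved point $p_0$ lies on $c$. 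The isotropy groups along $c$ recur $W$-periodically, and by linear primitivity (Lemma~\ref{obstructions_lem_primitivity}(a)) their Lie algebras span $\fr g$; with the lower Weyl group bound (Lemma~\ref{obstructions_lem_primitivity}(b)) this pins down the full stabilizer $N(H)_c$ of $c$. Curvature then enters only through $p_0$: parallel transport once around the loop $c$ is an isometry of $T_{p_0}M$ fixing the tangent $\dot c$, and since $\sec\ge 0$ everywhere while $\sec > 0$ at $p_0$, a second-variation argument (as in the proof of Lemma~\ref{cohom1_lem_connected}, or à la Synge and Berger) shows $c$ cannot be too minimal in the space of loops; combined with the $W$-periodic recurrence of the isotropy groups this should force one of them to contain a corank-$\epsilon$ torus.

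The hard part is precisely this last step — making the curvature information available only at the single point $p_0$ do globally what Berger's theorem does for free. This is why one genuinely needs the structure of the Weyl group, in particular its action on a maximal torus of $H_0$: the argument becomes a case analysis over the dihedral group $W$ and the possible homogeneous-sphere representations $K^\pm/H$, carried out in tandem with the proof of the primitivity statement Lemma~\ref{obstructions_lem_primitivity}(c), and it is here that the proof departs entirely from \cite{gwz}.
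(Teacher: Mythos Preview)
Your first paragraph is fine: the parity/corank reduction of the ``in particular'' clause to the $K^\pm$ statement is correct. The gap is in the rest.

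You dismiss the fixed-point reduction with the remark that ``zero sets of Killing fields are only nonnegatively curved,'' but this misidentifies the relevant fixed point sets. The point is to take fixed point sets of subgroups of $H$, not of a maximal torus of $G$. For any $L\subseteq H$, the component $M^L_c$ contains the horizontal geodesic $c$; since the open set of points with all sectional curvatures positive is $G$-invariant it is a union of orbits and hence meets $c$, and since $M^L_c$ is totally geodesic it is then itself a quasipositively curved cohomogeneity-one manifold (under $N(L)_c$), of the same parity as $M$. So the induction does \emph{not} collapse. The paper first takes $L = T$ a maximal torus of $H_0$, so that $N(T)_0/T$ has rank equal to $\cork H$ and acts on $M^T_c$ with finite principal isotropy $(N(T)_0\cap H)/T$; it then inducts on the order of the now-finite $H$ by passing to $M^\iota_c$ for nontrivial $\iota\in H$, using that $C(\iota)$ has full rank in $G$.

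This reduces everything to $H=\lbrace e\rbrace$, and here the work has already been done: the Core--Weyl Lemma (Lemma~\ref{lem_weyl_coreweyl}) forces $\rk G_0\le 2$ and puts $G_0$ on a short explicit list, after which the statement is read off by inspection. Your proposal never mentions this lemma; instead you gesture at a holonomy/second-variation argument around $c$ and a case analysis over $W$, but do not carry it out, and there is no visible mechanism by which parallel transport around $c$ constrains $\rk K^\pm$ rather than merely $\vert W\vert$ or $\dim G/H$. The real engine is the Core--Weyl Lemma, and the key observation you are missing is that fixed point sets of subgroups of $H$ inherit quasipositive curvature because they contain $c$.
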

In the case of positive curvature the proof relies on the well known fact, that a torus action on an even dimensional manifold always has a fixed point. In quasi positive curvature the lemma is still valid. The proof is more complicated and requires knowledge about the Weyl group and the classification of quasipositively curved cohomogeneity one manifolds with trivial principal isotropy group. We will proof this lemma in Section \ref{secweyl}.\\

Another very useful recognition tool will be Wilking's Connectedness Lemma.

\begin{thm}[Connectedness Lemma, \cite{w1}]\label{obstructions_thm_connect}
	Let $M^n$ be a compact positively curved Riemannian manifold and $N^{n-k}$ a compact totally geodesic submanifold. Then the inclusion map $N^{n-k} \to M$ is $(n-2k+1)$-connected. 
\end{thm}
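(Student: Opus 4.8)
The plan is to run the Morse theory of the energy functional on a path space, exactly as in the proof of Lemma~\ref{cohom1_lem_connected}, with the singular orbit $B$ now replaced by the totally geodesic submanifold $N$. Let $\Omega_N(M)$ be the space of piecewise smooth paths $\gamma\colon[0,1]\to M$ with $\gamma(0),\gamma(1)\in N$, equipped with the energy functional $E$, and embed $N\hookrightarrow\Omega_N(M)$ as the constant paths; these are precisely the absolute minima $E^{-1}(0)$. Using $\pi_i(M,N)\cong\pi_{i-1}(\Omega_N(M),N)$ (cf.~\cite{w1}), Theorem~\ref{obstructions_thm_connect} reduces to showing that $\Omega_N(M)$ has the homotopy type of a complex obtained from $N$ by attaching cells of dimension at least $n-2k+1$, and for this it suffices to prove that every nonconstant critical point of $E$ — that is, every nonconstant geodesic $\gamma$ meeting $N$ orthogonally at both endpoints — has index at least $n-2k+1$.

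First I would write down the index form for $\Omega_N(M)$ along such a $\gamma$. Since $N$ is totally geodesic its second fundamental form vanishes, so the boundary terms disappear and the index form is the usual
\[
I(X,X)=\int_0^1\bigl(\norm{X'}^2-\langle R(X,\dot\gamma)\dot\gamma,X\rangle\bigr)\,dt
\]
on variation fields $X$ along $\gamma$ with $X(0)\in T_{\gamma(0)}N$ and $X(1)\in T_{\gamma(1)}N$. The key step is a dimension count: parallel transport $P_\gamma\colon T_{\gamma(0)}M\to T_{\gamma(1)}M$ carries $\dot\gamma(0)$ to $\dot\gamma(1)$, hence maps $T_{\gamma(0)}N$ into $\dot\gamma(1)^\perp$, which is $(n-1)$-dimensional and also contains $T_{\gamma(1)}N$. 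Two subspaces of dimension $n-k$ inside an $(n-1)$-dimensional space intersect in a subspace $V$ with $\dim V\ge 2(n-k)-(n-1)=n-2k+1$. For $v\in V$ let $X_v$ be the parallel field along $\gamma$ with $X_v(1)=v$; then $X_v(0)=P_\gamma^{-1}(v)\in T_{\gamma(0)}N$ and $X_v(1)=v\in T_{\gamma(1)}N$, so $X_v$ is admissible, and since $X_v$ is parallel and everywhere orthogonal to $\dot\gamma$ one gets $I(X_v,X_v)=-\int_0^1 K(X_v,\dot\gamma)\,\norm{X_v}^2\norm{\dot\gamma}^2\,dt<0$ because $M$ is positively curved and $\gamma$ is nonconstant. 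As $v\mapsto X_v$ is linear and injective, $I$ is negative definite on an $(n-2k+1)$-dimensional space, so $\gamma$ has index at least $n-2k+1$.

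It remains to feed this bound into the Morse theory of $E$ on $\Omega_N(M)$, exactly as in Lemma~\ref{cohom1_lem_connected} and in \cite{w1,m}: via the finite-dimensional approximation by broken geodesics, $\Omega_N(M)$ is obtained up to homotopy from $N$ by attaching cells of dimension $\ge n-2k+1$, hence $\pi_j(\Omega_N(M),N)=0$ for $j\le n-2k$, hence $\pi_i(M,N)=0$ for $i\le n-2k+1$, which is the desired statement that $N\to M$ is $(n-2k+1)$-connected. The one genuinely delicate point is this last Morse-theoretic step: the critical geodesics may be degenerate, so the cell-attachment conclusion has to be extracted from the minimax/finite-dimensional-approximation machinery rather than from naive Morse charts. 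The index estimate above is, however, insensitive to degeneracy, and if $N$ is disconnected one simply runs the argument on the component of $\Omega_N(M)$ containing a given constant path.
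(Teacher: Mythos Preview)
Your argument is correct and is precisely the standard proof of Wilking's Connectedness Lemma from \cite{w1}: Morse theory on $\Omega_N(M)$, the dimension count $P_\gamma(T_{\gamma(0)}N)\cap T_{\gamma(1)}N\subset\dot\gamma(1)^\perp$ giving at least $n-2k+1$ parallel admissible fields, and negativity of the index form on these by positive curvature. The paper does not give its own proof of this theorem but simply cites \cite{w1}; your write-up reproduces that proof faithfully, and your reference back to the Morse-theoretic argument of Lemma~\ref{cohom1_lem_connected} is exactly the right pointer, since the paper explicitly models that lemma on the same technique.
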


As it is pointed out in \cite{w1} the above theorem also works in non negative curvature, if any geodesic emanating perpendicularly to $N^{n-k}$, intersects the set of points with positive sectional curvature. In particular it is shown, that each such geodesic has index at least $n-2k+1$ with respect to the energy functional. If we now have a cohomogeneity one manifold with quasipositive curvature and one of the non principal orbits, say $B_-$, is totally geodesic the proof of the Connectedness Lemma together with the proof of Lemma \ref{cohom1_lem_connected} implies that $n - 2(l_-+1)+1 \le l_+$, since each geodesic starting perpendicularly to $B_-$ and intersecting $B_+$ exactly once has index exactly $l_+$ with respect to the energy functional. This proves the following 
\begin{lem}\label{obstructions_lem_totgeod}
	Let $M$ be a quasipositively curved cohomogeneity one manifold, such that the non principal orbit $B_-$ is totally geodesic. Then $\dim G/H \le 2\cdot l_- + l_+$.
\end{lem}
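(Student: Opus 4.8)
The plan is to combine the index computation contained in the proof of Lemma~\ref{cohom1_lem_connected} with the non-negatively curved version of Wilking's Connectedness Lemma, applied to the totally geodesic submanifold $N = B_-$. Note that $B_-$ has codimension $k = l_-+1$ in $M$, since its normal disc $\bb D_-$ has dimension $l_-+1$.

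First I would fix a horizontal geodesic $c \colon \bb R \to M$ and consider the segment $\bar c = c|_{[-1,3]}$. Its endpoints $c(-1)=p_-$ and $c(3)$ both lie on $B_-$ (the singular isotropy groups alternate along $c$, so $G_{c(-1)}$ and $G_{c(3)}$ are conjugate to $K^-$ while $G_{c(1)}$ is conjugate to $K^+$; hence $c(3) \in G\cdot p_- = B_-$), and $\bar c$ is orthogonal to every orbit, in particular to $B_-$, at both endpoints; thus $\bar c$ is a critical point of the energy functional on the path space $\Omega_{B_-}(M)$. Moreover $\bar c$ meets $B_+$ exactly once, namely at $c(1)$. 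The argument in the proof of Lemma~\ref{cohom1_lem_connected} then shows that the only focal points of $B_-$ encountered along $\bar c$ are at $c(1)$, with focal multiplicity $l_+$ arising from the action fields $X^*$ with $X \in \fr m \cap \fr k^+$, and at the endpoint $c(3)$; hence the index of $\bar c$ with respect to the energy functional is \emph{exactly} $l_+$.

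Next I would check that $\bar c$ passes through a point where all sectional curvatures are positive. Since $M$ is quasipositively curved, the set $U \subset M$ of such points is open, non-empty and $G$-invariant (positivity of all sectional curvatures at a point is preserved by the isometric $G$-action). Hence $\pi(U) \subset M/G = [-1,1]$ is open and non-empty, so it contains some $t_1 \in (-1,1)$; as $\pi^{-1}(t_1)$ is a single principal orbit meeting the $G$-invariant set $U$, it is contained in $U$, so $c(t_1) \in U$ with $t_1 \in (-1,1) \subset [-1,3]$. By the remark following Lemma~\ref{obstructions_thm_connect}, Wilking's argument shows that any geodesic emanating perpendicularly from the totally geodesic submanifold $B_-$ and meeting $U$ has index at least $n - 2k + 1 = n - 2(l_-+1)+1$ with respect to the energy functional. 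Applying this to $\bar c$ and comparing with the previous paragraph yields $n - 2(l_-+1)+1 \le l_+$, i.e. $\dim M = n \le 2l_- + l_+ + 1$; since $\dim G/H = \dim M - 1$ this is exactly the assertion.

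The main point requiring care is that the two index bounds refer to the same critical point of the same functional: the upper bound $l_+$ needs $\bar c$ to intersect $B_+$ only once, which is why one takes the segment over $[-1,3]$ rather than a longer interval; and Wilking's lower bound needs $B_-$ to be totally geodesic — so that the second fundamental form terms in the index form vanish — and needs $\bar c$ to actually hit $U$. All of these conditions are satisfied here, and once the index of $\bar c$ is pinned down from both sides the conclusion is immediate, so I do not expect any deeper difficulty.
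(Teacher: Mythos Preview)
Your proposal is correct and follows essentially the same approach as the paper: both combine the exact index computation from Lemma~\ref{cohom1_lem_connected} (a geodesic from $B_-$ back to $B_-$ meeting $B_+$ once has index exactly $l_+$) with the non-negatively curved version of Wilking's Connectedness Lemma to obtain the lower bound $n-2(l_-+1)+1$ on that same index. Your write-up is in fact more careful than the paper's, spelling out the choice of the segment $c|_{[-1,3]}$ and the $G$-invariance argument ensuring that the horizontal geodesic passes through the positively curved set.
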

Note, that, if one of the Weyl group elements, say $w_-$, is represented by a central element in $G$, the orbit $B_-$ is also totally geodesic, but here linear primitivity already implies the above bound on the dimension of the principal orbit.\\
Another consequence of the Connectedness Lemma together with Poincaré duality is:
\begin{lem}
	If $V$ is totally geodesic of codimension $2$ in $M$, which has odd dimension and positive curvature, then the universal covering $\tilde M$ of $M$ is a homotopy sphere.
\end{lem}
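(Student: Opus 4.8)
The plan is to combine the Connectedness Lemma (applied as in the non-negatively curved case, since $M$ is quasipositively curved) with Poincaré duality, following the standard argument of Wilking. First I would pass to the universal cover: since $M$ is compact and quasipositively curved it has finite fundamental group by Cheeger--Gromoll, so $\tilde M$ is a compact simply connected manifold, and the preimage $\tilde V$ of $V$ is totally geodesic of codimension $2$ in $\tilde M$. Because $V$ has positive curvature, any geodesic emanating perpendicularly to $\tilde V$ meets the positively curved locus, so the version of Theorem \ref{obstructions_thm_connect} valid in this setting applies with $k=2$: the inclusion $\tilde V \to \tilde M$ is $(n-3)$-connected, where $n=\dim M = \dim \tilde M$.

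Next I would feed this connectedness statement into Poincaré duality on $\tilde M$. Being $(n-3)$-connected, the inclusion induces isomorphisms $H_i(\tilde V) \cong H_i(\tilde M)$ for $i \le n-4$ and a surjection for $i = n-3$; in particular $\tilde M$ is $(n-3)$-connected relative to a submanifold of dimension $n-2$, which forces $H_i(\tilde M) = 0$ for $1 \le i \le n-3$ once one also uses that $\tilde V$ itself, being a simply connected positively curved odd-dimensional $(n-2)$-manifold, has the rational homology of a sphere in that range (or more simply, runs the duality bookkeeping directly on $\tilde M$: $H_i(\tilde M) \cong H_{n-i}(\tilde M)$, and the low-degree vanishing propagates to high degree). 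Concretely, $H_1(\tilde M) = 0$ by simple connectivity, and the connectedness lemma kills $H_i$ for $i$ up to $n-3$; Poincaré duality then kills $H_i$ for $i$ from $3$ up to $n-1$; the overlap covers all of $1,\dots,n-1$ since $n$ is odd and hence $n-3 \ge 2$. Thus $\tilde M$ is a simply connected homology sphere, and by the Hurewicz theorem and Whitehead's theorem it is a homotopy sphere.

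The one point that needs care is the edge case of small $n$: the argument above uses $n-3 \ge 2$, i.e. $n \ge 5$, to guarantee that the connectedness range and the Poincaré-duality range together exhaust $\{1,\dots,n-1\}$. For $n=3$ the statement is either vacuous or follows from the fact that a simply connected closed $3$-manifold is a homotopy sphere (Perelman), so one may assume $n\ge 5$, where the dihedral-overlap counting is clean. I expect this low-dimensional bookkeeping, together with the precise justification that the non-negatively curved form of the Connectedness Lemma applies here (which the paper has already recorded in the discussion preceding Lemma \ref{obstructions_lem_totgeod}), to be the only real obstacle; the rest is the routine Wilking-style homology chase.
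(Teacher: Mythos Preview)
Your overall plan matches the paper's: the lemma is recorded there without proof, simply as ``another consequence of the connectedness lemma together with Poincar\'e duality.'' But your execution has a real gap, and also a misreading.

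First the misreading: the clause ``which has odd dimension and positive curvature'' refers to $M$, not $V$. So $M$ is positively curved everywhere and Theorem~\ref{obstructions_thm_connect} applies directly; your sentence ``because $V$ has positive curvature, any geodesic emanating perpendicularly to $\tilde V$ meets the positively curved locus'' is both unnecessary and a non sequitur (intrinsic curvature of $V$ says nothing about ambient sectional curvatures along normal geodesics).

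The substantive gap is in the homology chase. The Connectedness Lemma does \emph{not} ``kill $H_i$ for $i$ up to $n-3$''; it only gives $H_i(\tilde M)\cong H_i(\tilde V)$ in that range, and you have no independent information about $H_i(\tilde V)$ (your aside that a simply connected positively curved odd-dimensional manifold has the rational homology of a sphere is not a known theorem). What is actually needed is Wilking's periodicity step: combining $H^i(\tilde M)\cong H^i(\tilde V)$ for $i\le n-4$ with Poincar\'e duality on \emph{both} $\tilde V^{n-2}$ and $\tilde M^{n}$ yields isomorphisms $H^i(\tilde M)\cong H^{i+2}(\tilde M)$ for $2\le i\le n-4$, a surjection $H^1(\tilde M)\to H^3(\tilde M)$, and an injection $H^{n-3}(\tilde M)\to H^{n-1}(\tilde M)$. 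Since $n$ is odd, the odd-degree chain starting from $H^1=0$ forces $H^3=H^5=\cdots=H^{n-2}=0$; for the even degrees, Poincar\'e duality on $\tilde M$ gives $H^{n-1}\cong H_1=0$, the injection then gives $H^{n-3}=0$, and the chain of isomorphisms pulls this back to $H^2=H^4=\cdots=H^{n-3}=0$. This two-sided bootstrap, using duality on both manifolds to produce the degree-$2$ periodicity, is the missing idea; your version never leaves $\tilde M$ and so cannot get started.
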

In the case of cohomogeneity one manifolds with quasipositive curvature, we again get the same conclusion, if one of the singular orbits is totally geodesic with codimension $2$.\\
By the previous lemma, it will be important to study cohomogeneity one homotopy spheres. We proof the following equivalent of Theorem 2.7 of \cite{gwz}.
\begin{thm}\label{obsturctions_thm_homotopy}
	Any cohomogeneity one homotopy sphere $\Sigma^n$ with an invariant metric of quasipositive curvature is equivariantly diffeomorphic to the standard sphere $\bb S^n$ with a linear action.
\end{thm}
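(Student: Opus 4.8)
The plan is to combine the equivariant classification of cohomogeneity one actions on homotopy spheres with the curvature obstructions assembled in this section. By Straume's classification \cite{st} (compare the proof of Theorem~2.7 in \cite{gwz}), a cohomogeneity one homotopy sphere $\Sigma^n$ together with its almost effective $G$-action is equivariantly diffeomorphic either to a standard sphere $\bb S^n$ carrying a \emph{linear} action --- in which case there is nothing left to prove --- or to one of the non-linear models on the list. These non-linear models are exhausted by the Brieskorn varieties $W^{2m-1}_d = \lbrace z_0^d+z_1^2+\dots+z_m^2 = 0 \rbrace \cap \bb S^{2m+1} \subset \bb C^{m+1}$ with $d\ge 3$ odd, each equipped with the cohomogeneity one action of $\SO(2)\times\SO(m)$ (for $m$ odd these include the Kervaire spheres, which are exotic in most dimensions), possibly together with finitely many further actions in low dimensions that are dealt with in the same way. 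It therefore suffices to show that none of these non-linear examples admits an invariant metric of quasipositive curvature.

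For $n>5$ this is immediate. A quasipositively curved metric is in particular non-negatively curved, and by \cite{gvwz} none of the manifolds $W^{2m-1}_d$ with $d\ge 3$ carries an invariant metric of non-negative curvature as soon as $2m-1>5$; the possible sporadic low-dimensional actions either do not occur on homotopy spheres or are excluded by the same reference. This reproves in a stronger form the observation of Back and Hsiang \cite{bh} in dimensions greater than five, and in particular it discards every exotic $\Sigma^n$.

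The case $n=5$ is the essential one, and it is here that the positively curved argument of Searle \cite{s} must be replaced: the Alexandrov-geometric ingredient it relies on (strict concavity of the distance function from a face of the orbit space) has no counterpart under merely quasipositive curvature. In dimension five the only non-linear model is $W^5_d$ ($d\ge 3$ odd), which is diffeomorphic to $\bb S^5$ but carries a non-linear $G=\SO(2)\times\SO(3)$-action whose group diagram has finite principal isotropy $H$, so that $\dim G/H=4$, and with both normal spheres circles, $l_-=l_+=1$. The plan is to extract a \emph{new} obstruction from the Morse theory of the path space $\Omega_{B_-}(M)$ used in the proof of Lemma~\ref{cohom1_lem_connected}: after the standard openness and $G$-averaging argument one may assume that the set of points with all sectional curvatures positive is an open invariant band, hence that a horizontal geodesic $c$ meets it; the curvature along the arc of $c$ between the two singular orbits should then force additional focal points of $B_-$ before $c$ reaches $B_+$, raising the index of $c$ in $\Omega_{B_-}(M)$ above the value $l_+$ that the cohomogeneity one structure alone provides. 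Converting this into a numerical inequality in the spirit of Lemma~\ref{obstructions_lem_totgeod} and of Wilking's connectedness lemma \cite{w1}, and observing that the data $\dim G/H=4$, $l_\pm=1$ of $W^5_d$ violate it, completes the argument. As a second, independent route for the $W^5_d$ diagram one can try to derive a contradiction from the Primitivity Lemma~\ref{obstructions_lem_primitivity}, i.e.\ from linear primitivity together with the lower Weyl group bound.

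The step I expect to be the genuine obstacle is exactly this $n=5$ case. In positive curvature one disposes of $W^5_d$ via the Grove--Searle and Frankel-type results, none of which survive the weakening to quasipositive curvature, so everything rests on the Morse-theoretic index count of Section~\ref{seccohom1}. The subtle point is that a codimension two singular orbit with $l_\pm=1$ is the most hostile configuration for such a count --- it supplies an index of only $l_+=1$ for free --- so one really has to squeeze a definite extra amount out of the single guaranteed point of positive curvature on $c$ in order to contradict $\dim G/H=4$; obtaining this quantitative improvement, rather than merely the soft statement, is where the new idea is needed.
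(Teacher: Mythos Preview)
Your reduction to the Brieskorn varieties $M_d^{2n-1}$ via Straume's classification is correct, and the use of \cite{gvwz} to dispose of the cases $n\ge 4$ is fine, though as you will see it is unnecessary.

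The genuine gap is the five-dimensional case, where you do not actually have a proof: you propose a Morse-theoretic index enhancement and admit in your final paragraph that the required quantitative improvement is still missing. The point you have overlooked is much simpler. Look at the group diagram: $K^-=\SO(2)\cdot\SO(n-2)$ with the $\SO(2)$ embedded as $(e^{-i\theta},\diag(R(d\theta),A))$, while $H=\bb Z_2\times\SO(n-2)$ projects only to $\{\pm 1\}$ in the $\SO(2)$-factor. Hence $(\SO(2)\times 1)\cap K^-=\bb Z_d$, and since $d\ge 3$ is odd this $\bb Z_d$ is not contained in $H$. But $\bb Z_d$ is central in $G=\SO(2)\times\SO(n)$, so $B_-$ is a fixed point component of this central subgroup and is therefore totally geodesic.

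Once $B_-$ is totally geodesic, Lemma~\ref{obstructions_lem_totgeod} applies directly and gives
\[
2n-2=\dim G/H\le 2l_-+l_+=2+(n-2)=n,
\]
i.e.\ $n\le 2$, which is absurd. This single argument handles all $n\ge 3$ uniformly, including the $n=3$ case you found difficult, and no separate treatment via \cite{gvwz} is needed. Your second suggested route through the lower Weyl group bound does not by itself close the case either: it only yields $\vert W\vert\ge 4$, which is compatible with the upper bound $\vert W\vert\le 8$ from Proposition~\ref{prop_weylgroup_upperweyl}.
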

\begin{proof}
	By the classification of low cohomogeneity homology spheres by Straume \cite{st}, the only cohomogeneity one homotopy spheres that are not standard spheres with linear actions are  $(2n-1)$-dimensional Brieskornvarieties $M^{2n-1}_d$ with an action by $\SO(2) \times \SO(n)$. For $d$ and $n\ge 3$ odd they are spheres and exotic if $2n-1 \equiv 1 \text{ mod } 8$ and $d\ge 3$. The group diagram of $M^{2n-1}_d$ for $n$ and $d$ odd is given by 
	\begin{align*}
		 &H = \bb Z_2 \times \SO(n-2) = \lbrace(\epsilon, \diag(\epsilon, \epsilon, A)) \vert \, \epsilon = \pm 1, \, A\in \SO(n-2)\rbrace\\
		 &K^-= \SO(2)\cdot \SO(n-2) = (e^{-i \theta}, \diag(R(d\theta),A))\\
		 &K^+=\O(n-1) = (\det B, \diag(\det B,B))
	\end{align*}
	Here $R(\theta)$ is the counterclockwise rotation by angle $\theta$. Note that $d=1$ is a standard sphere with a linear action and $l_- =1$ and $l_+ = n-2$. By \cite{gvwz} $M^{2n-1}_d$ does not have an invariant metric of non negative curvature, if $n \ge 4$ and $d \ge 3$. We will show, that $M^{2n-1}_d$ cannot have a metric of quasipositive curvature for $n \ge 3$ and $d\ge 3$. To obtain this, note, that $\SO(2)\times 1 \cap K^- = \bb Z_d$. Therefore $B_-$ is totally geodesic. If $M_d^{2n-1}$ has a metric with quasipositive curvature, by Lemma \ref{obstructions_lem_totgeod} we have $2n-2= \dim G/H \le 2 + n-2 =n$, which is equivalent to $n \le 2$, a contradiction. 
\end{proof}
	The analogue conclusion holds for cohomogeneity one manifolds with the rational cohomology ring of a non spherical rank one symmetric space (cf. \cite{i1,i2,u}).\\

As mentioned in the introduction Wilking's Chain Theorem (\cite{w2}, Theorem 5.1) is a useful tool in the classification of positively curved cohomogenity one manifolds. For quasipositive curvature we have the same result in Theorem \ref{intro_block}. To make this section complete, we restate it here and prove it in Section \ref{secblock}. 
\begin{thm} [Block Theorem]\label{obstructions_thm_block}
	Let $M$ be a simply connected Riemannian manifold with quasipositive sectional curvature, such that $G= L \times G_d$ acts isometrically on $M$ with cohomogeneity one, where $L$ is a compact connected Lie group and \emph{$(G_d,u) \in \lbrace (\text{SO}(d),1),$ $(\text{SU}(d),2),$ $(\text{Sp}(d),4) \rbrace$}. Assume that the principal isotropy group contains up to conjugacy a lower $k\times k$-block $B_k$, with $k \ge 3$, if $u = 1,2$, and $k\ge 2$, if $u = 4$. Then $M$ is equivariantly diffeomorphic to a rank one symmetric space with a linear action.    
\end{thm}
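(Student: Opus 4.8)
The plan is to follow the strategy of Wilking's original Chain Theorem proof (\cite{w2}, Theorem 5.1), but to use the cohomogeneity one hypothesis to bypass the steps where positive curvature was essential. First I would set up the block notation: write $B_k \subset G_d$ for the lower $k \times k$-block subgroup (so $B_k \cong \SO(k), \SU(k), \Sp(k)$ according to $u$), and let $F$ be the component of the fixed point set $M^{B_k}$ that meets the horizontal geodesic $c$ (it is nonempty since $B_k \subset H$ up to conjugacy). By the discussion of fixed point components of subactions in Section~\ref{seccohom1}, $F$ is a cohomogeneity one manifold under the action of $N(B_k)_c$, and more importantly the complementary subgroup $G_{d-k}$ (the upper $(d-k)\times(d-k)$-block, together with $L$) acts on $F$. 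The key geometric input is that $M^{B_k}$ is totally geodesic; combined with the quasipositive curvature hypothesis and the Connectedness Lemma (Theorem~\ref{obstructions_thm_connect}, in the version valid for totally geodesic submanifolds meeting the positively curved set), the inclusion $F \hookrightarrow M$ is highly connected — precisely, $(n - 2\operatorname{codim} F + 1)$-connected. Since $\operatorname{codim} F$ equals the dimension of the $B_k$-representation on the normal space, which grows like $uk$ (times the number of normal blocks), while $\dim M = n$ is controlled by the cohomogeneity one structure, one gets that $F \to M$ is at least $3$-connected once $k$ is large enough, the thresholds $k \ge 3$ (resp. $k \ge 2$ for $\Sp$) being exactly what makes this work.

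Next I would run an induction on $d$, reducing from $G_d$ to $G_{d-1}$ (or more precisely passing to the fixed-point submanifold of a one-dimensional sub-block and identifying it as a quasipositively curved cohomogeneity one manifold for a group still containing a large block). The base case is where $G_d$ itself is small, i.e.\ $d = k$ or $d = k+1$; here $B_k = H$ or nearly so, and one reads off the group diagram directly. In the inductive step, the Connectedness Lemma forces the inclusion of the smaller cohomogeneity one manifold to be so highly connected that $M$ and the submanifold have the same homotopy type through a range that, by Poincaré duality (as in the lemma preceding Theorem~\ref{obsturctions_thm_homotopy}), pins down the cohomology ring of $\tilde M$ to be that of a rank one symmetric space. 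At that point Theorem~\ref{obsturctions_thm_homotopy} and the remark following it (covering cohomogeneity one manifolds with the rational cohomology of a rank one symmetric space) upgrade this to an equivariant diffeomorphism with a linear model, using Straume's classification \cite{st} together with the obstruction in Theorem~\ref{obsturctions_thm_homotopy} to kill the exotic Brieskorn candidates.

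I expect the main obstacle to be controlling the normal representation of $B_k$ precisely enough to guarantee the codimension bound needed for the Connectedness Lemma, uniformly down the induction — in positive curvature one has extra rigidity (e.g.\ from the Rank Lemma and from fixed-point-homogeneity arguments) that is only partially available here. In particular one must verify that the subaction on $F$ still meets a point of positive curvature, i.e.\ that the positively curved point of $M$ can be taken on, or arranged to influence, the totally geodesic submanifold $F$; this is where the cohomogeneity one hypothesis is genuinely used, since $F$ contains the orbit through $c(0)$ and one can push the quasipositive point along the $G$-orbit into the relevant position, or invoke Lemma~\ref{obstructions_lem_totgeod}-type dimension counts when it cannot. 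A secondary technical point is the bookkeeping when the $B_k$-normal representation has several isotypical blocks (so $\operatorname{codim} F$ is a multiple of $uk$): one should check that this only helps, i.e.\ makes $F \to M$ more connected, so the worst case is a single normal block, which is precisely the linear model on a rank one symmetric space. Once the homotopy type is identified, the passage to equivariant diffeomorphism is routine given Theorems~\ref{intro_fp_hom} and~\ref{obsturctions_thm_homotopy} and the classification of low cohomogeneity actions on the relevant spaces.
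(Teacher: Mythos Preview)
Your proposal follows Wilking's original Chain Theorem strategy, but this is precisely the approach that does not carry over to quasipositive curvature, and the paper's proof takes a fundamentally different route.

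The gap is in your use of the Connectedness Lemma. For the inclusion $F = M^{B_k} \hookrightarrow M$ to be $(n - 2\operatorname{codim} F + 1)$-connected in the quasipositive setting, you need every geodesic emanating perpendicularly from $F$ to meet the set of positive curvature. You argue that the positively curved point can be pushed along its $G$-orbit into $F$; this does show that $F$ itself contains a positively curved point, but it says nothing about geodesics \emph{normal} to $F$. Those geodesics leave $F$ in directions on which $B_k$ acts nontrivially; they are not horizontal for the cohomogeneity one structure, and there is no reason they should reach the positively curved region. In the paper the Connectedness Lemma is only applied when the totally geodesic submanifold is a singular orbit (Lemma~\ref{obstructions_lem_totgeod}), because in that case perpendicular geodesics are horizontal and automatically sweep through the positive set. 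For a fixed point component transverse to the orbits no such mechanism is available, so your induction cannot get off the ground. The same obstruction blocks the inductive step: even granting that $F$ is quasipositively curved, you need the Connectedness Lemma again at the next stage.

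The paper replaces this machinery with a new local ingredient, the Block Lemma (Lemma~\ref{block_lem_blocklemma}). At a single regular point $p$ with all sectional curvatures positive, one writes the Gauss equation for the principal orbit and uses the $\Ad_H$-equivariance of the derivatives $P_l = \partial_l P_p$ of the metric endomorphism. The $B_k$-equivariance forces these operators to act by scalars $\lambda_j^l$ on the $d-k$ irreducible normal $B_k$-blocks, and positivity of the relevant sectional curvatures at $p$ translates into $\langle \lambda_{j_1}, \lambda_{j_2} \rangle < 0$ for $j_1 \neq j_2$ in $\bb R^r$. Since at most $r+1$ vectors can have pairwise negative inner products, $d - k \le r + 1 = 2$. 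This single inequality, extracted from one point, replaces the Connectedness Lemma entirely. The rest of the proof is a direct case analysis of essential diagrams with $d - k = 2$, using primitivity, the Isotropy Lemma, the Core-Weyl Lemma, and linear primitivity bounds to pin down each diagram as that of a linear action on a rank one symmetric space.
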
                                    
	The following obstruction is a generalization of Frankel's Theorem to quasipositive curvature, but it is not necessary for the submanifolds to be totally geodesic. We will denote by $\II$ the second fundamental form of a submanifold of $M$.	
\begin{lem}[Partial Frankel Lemma]\label{obstruction_lem_partial}
	Let $M$ be a Riemannian manifold and $B$ and $C$ disjoint closed submanifolds on $M$. Let $c\colon [a,b] \to M$ be a minimal geodesic from $B$ to $C$. Then there is no parallel vector field $V$ along $c$, tangent to $B$ at $t = a$ and to $C$ at $t= b$, such that $\langle \II (V(a),V(a)), \dot c(a) \rangle \ge 0$, $\langle \II(V(b),V(b), \dot c(b) \rangle\le 0$ and $\int_a^b \langle R(V(t),\dot c(t))\dot c(t), V(t)\rangle dt >0$. 
\end{lem}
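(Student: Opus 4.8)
The plan is to argue by contradiction using the second variation of energy, exactly in the spirit of Frankel's theorem, but keeping track of the boundary terms produced by the second fundamental forms of $B$ and $C$. Suppose such a parallel vector field $V$ along the minimal geodesic $c\colon[a,b]\to M$ exists. I would consider the variation $c_s(t)$ of $c$ obtained by flowing the endpoints along $B$ and $C$ with initial velocity $V$; concretely, choose a variation through curves from $B$ to $C$ whose variation field is $V$, with $V(a)$ tangent to $B$ and $V(b)$ tangent to $C$ (this is possible precisely because $V(a)\in T_{c(a)}B$ and $V(b)\in T_{c(b)}C$). Since $c$ is a \emph{minimal} geodesic from $B$ to $C$, the energy (equivalently length) of $c_s$ has a minimum at $s=0$, so the second variation $\left.\tfrac{d^2}{ds^2}\right|_{s=0}E(c_s)$ must be nonnegative.

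Next I would write out the standard second variation formula for a geodesic with endpoints moving in submanifolds. Because $V$ is parallel along $c$, we have $V'\equiv 0$, so the interior term $\int_a^b \big(\langle V',V'\rangle - \langle R(V,\dot c)\dot c, V\rangle\big)\,dt$ collapses to $-\int_a^b \langle R(V(t),\dot c(t))\dot c(t),V(t)\rangle\,dt$. The boundary contribution is the difference of the second fundamental form terms: it equals $\langle \II(V(b),V(b)),\dot c(b)\rangle - \langle \II(V(a),V(a)),\dot c(a)\rangle$ up to the sign conventions fixed by the chosen orientation of $c$ (the inward-pointing normal at $b$ is $-\dot c(b)$ and at $a$ is $\dot c(a)$, which is exactly why the hypotheses are stated as $\langle\II(V(a),V(a)),\dot c(a)\rangle\ge 0$ and $\langle\II(V(b),V(b)),\dot c(b)\rangle\le 0$). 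Putting this together, the second variation is
\begin{align*}
\left.\frac{d^2}{ds^2}\right|_{s=0} E(c_s) = \langle \II(V(b),V(b)),\dot c(b)\rangle - \langle \II(V(a),V(a)),\dot c(a)\rangle - \int_a^b \langle R(V(t),\dot c(t))\dot c(t),V(t)\rangle\, dt.
\end{align*}
By hypothesis the first term is $\le 0$, the second term (with its minus sign) is $\le 0$, and the integral is strictly positive, so the whole expression is strictly negative. This contradicts minimality of $c$, proving the lemma.

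The main thing to be careful about — and the only real obstacle — is bookkeeping of signs in the boundary terms of the second variation formula, i.e. matching the paper's convention for $\II$ and for the shape operator (note the convention $J'(0)+S_{\dot c(0)}(J(0))$ used in the proof of Lemma~\ref{cohom1_lem_connected}) with the direction of $\dot c$ at each endpoint, so that "geodesic bends away from $B$ toward $C$ decreases energy" comes out with the stated inequalities. I would also remark that one must check the chosen variation genuinely has endpoints constrained to $B$ and $C$ for all small $s$ (so that competitor curves are admissible in the minimization defining $c$); this is immediate from extending $V(a)$, $V(b)$ to vector fields tangent to $B$, $C$ near the endpoints and using their flows, with any smooth interpolation in between whose $s$-derivative at $s=0$ is $V$. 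No curvature assumption on $M$ is needed for the statement itself; the positivity of the curvature integral is supplied as a hypothesis, which is how this becomes a usable obstruction when $c$ passes through the quasipositively curved region.
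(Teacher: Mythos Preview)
Your proposal is correct and follows essentially the same approach as the paper: both argue by contradiction via the second variation formula, obtaining exactly the expression
\[
\langle \II(V(b),V(b)),\dot c(b)\rangle - \langle \II(V(a),V(a)),\dot c(a)\rangle - \int_a^b \langle R(V,\dot c)\dot c,V\rangle\,dt < 0,
\]
which contradicts minimality of $c$. The only cosmetic differences are that the paper uses the length functional rather than energy (equivalent here since $c$ has constant speed) and spells out an explicit construction of the endpoint-constrained variation by writing $B$ and $C$ locally as graphs over their tangent spaces via normal vector fields and patching with cutoff functions, whereas you invoke the existence of such a variation abstractly; your remark that this must be checked is exactly what the paper carries out in detail.
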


\begin{proof}
	We assume $c$ to be parametrized by arc length and $p \coloneqq c(a)$ and $q \coloneqq c(b)$. Let $\epsilon >0$ be small enough, such that $\exp_{c(t)}$ is a diffeomorphism on $B_\epsilon(0_{c(t)})$ for each $t$. Write $S=\exp_p(B_\epsilon(0_p)\cap T_pB)$ and choose $\delta>0$ small enough for $\exp$ to be a diffeomorphism, when restricted to $\nu^{\le \delta}S$. By shrinking $\epsilon$ and $\delta$, if needed, we can write $B$ as a graph in $U \coloneqq \exp(\nu^{\le \delta}S)$, i.e. there exists a vector field $N$ normal to $S$, such that $B\cap U = \lbrace \exp(N(x))\vert \, x \in S\rbrace$. Hence $N(p) = 0$ and $\partial_s\vert_{s=0} N(d(s)) = 0$ for each curve in $S$ with $d(0) = p$. In a neighbourhood of $ p$ we can extend $N$ smoothly to a vector field $N_1$, which fulfills $N_1\vert_S = N$, $N_1(c(t)) =0$ for all $t$ and $\partial_s\vert_{s=0} N_1(d(s)) = 0$ for each curve $d$ in $M$ orthogonal to $c$. We construct a vector field $N_2$ locally at $C$ in a similar way. Now let $f_{1,2}\colon [a,b] \to [0,1]$ be smooth functions, such that $f_1$ is $1$ in a neighbourhood of $a$ and $0$ outside a slightly larger neighbourhood of $a$. Similar for $f_2$ and $b$. For a parallel vector field tangent to $B$ and $C$ we define 
	\begin{align*}
	\alpha_s(t) \coloneqq \exp_{\exp_{c(t)}(sV(t))}(f_1(t)N_1(\exp_{c(t)}(sV(t))) + f_2(t) N_2(\exp_{c(t)}(sV(t))))
	\end{align*}
	By construction $\alpha_0 = c$ and $\partial_s\vert_{s=0} \alpha_s(t) = V(t)$. Furthermore $\alpha_s(a) \in B$ and $\alpha_s(b) \in C$ for each $s$. Hence it is a variation of curves connecting $B$ and $C$. Let $L$ denote the length functional. Since $c$ is a geodesic $\partial_s \vert_0 L(\alpha_s) = 0$. By the second variation formula
	\begin{align*}
	\partial_s^2\vert_0 L(\alpha_s) &= \langle \dot c(t), \nabla_{\partial_s}\vert_0 \partial_s \alpha_s(t) \rangle \vert_a^b + \int_a^b \norm{\nabla_{\partial_t}V(t)}^2 - \langle R(V(t),\dot c(t))\dot c(t), V(t)\rangle dt\\
	&= \langle \dot c(b) , \II(V(b),V(b)) \rangle  - \langle \dot c(a) , \II(V(a),V(a)) \rangle - \int_a^b \langle R(V(t),\dot c(t))\dot c(t), V(t)\rangle dt
	\end{align*}
	If $V$ fulfils the assumptions this is a contradiction to $c$ being minimal.
\end{proof}
This lemma is particularly useful, if the tangent spaces of the non principal orbits contain subspaces, on which the second fundamental form vanishes, which can often be obtained by equivariance arguments. If the sum of the dimension of these subspaces is at least as large as the dimension of the manifold, we get a contradiction, like in Frankel's Theorem.\\ 

We finish this section with a useful reduction of the problem to so called \emph{essential actions}. Let $G = G^\prime \cdot G^{\prime \prime}$. If $G^\prime$ is contained in one of the singular isotropy groups, say $K^-$, then either $G^\prime$ acts transitively on the normal sphere to $B_-$ or $G^{\prime \prime}$ acts with the same orbits as $G$. An action is called \emph{fixed point homogeneous}, if the group acts transitively on the normal sphere to a fixed point component. A subaction is called \emph{orbit equivalent}, if it acts with the same orbits as the original action. Hence, if a singular isotropy group contains a normal factor of $G$, either a normal subgroup acts fixed point homogeneous or orbit equivalent. This motivates the following
\begin{dfn}[\cite{gwz} Definition 2.13]
	An action is called \emph{essential}, if no subaction is fixed point homogeneous and no normal subaction is orbit equivalent.
\end{dfn}
If $M$ is quasipositively curved and a subgroup of $G$ acts fixed point homogeneous, then it is easy to see, that a normal subgroup of $G$ acts fixed point homogeneous on a singular orbit: Let $L\subset G$ be a subgroup acting fixed point homogeneous. First assume that $L$ is not conjugated to a subgroup of $H$. Without loss of generality $L \subset K^-$. Since $K^-$ acts transitively on the normal sphere of $B_-$, so does $L$, since otherwise it would be contained in $H$, and hence $L$ is a normal subgroup of $K^-$. Since the tangent space of $B_-$ is invariant by the action of $L$ it has to act trivially there. Now let $K^\prime$ be the normal subgroup of $K^-$, that acts trivially on $B_-$. $K^\prime$ still acts transitively on the normal sphere. Since $kg.p_- = g.p_-$ for all $g$, we have $g K^\prime g^{-1} \subset K^-$ for all $g$. Furthermore $gkg^{-1}h.p_- =h.p_-$ for each $h,g \in G$, hence $g K^\prime g^{-1} \subset K^\prime$ and is therefore a normal subgroup of $G$. Now let $L \subset H$. $L$ cannot be in the kernel of both normal spheres to the singular orbits, since otherwise it is contained in the center of $G$ by Lemma \ref{lem_weylgroup_ineffkern} and cannot act fixed point homogeneous. Therefore we can assume, that $L$ acts non trivially on the sphere normal to $B_-$. Since the action is fixed point homogeneous and $T_{p_-}B_-$ is an invariant subspace, $L$ acts trivially on $B_-$. Let $K^\prime$ be the kernel of the isotropy action of $B_- = G/K^-$. Then $K^- = K^\prime \cdot K^{\prime\prime}$. By the classification of homogeneous spheres one of the factors acts transitively on $\bb S^{l_-}$. If $K^{\prime \prime}$ acts transitively, then $H$ projects onto $K^\prime$ and hence $L \subset H\cap K^\prime$ acts trivially on the normal sphere. Therefore $K^\prime$ has to act transitively on $\bb S^{l_-}$ and thus acts fixed point homogeneous.\\
By Theorem \ref{intro_fp_hom}, it is enough to consider essential actions. The result follows from the following lemma.
\begin{lem}\label{obstructions_lem_alexandrov}
	Let $X$ be a non negatively curved cohomogeneity one Alexandrov space with boundary $F$. Assume there is an open set $U \subset X$ which has positive curvature. Then the set $C$ of maximal distance to $F$ consists of a single point.   
\end{lem}
\begin{proof}
	 We will use the partial flat strip property by Shioya and Yamaguchi (see \cite{ShiYa00} Proposition 9.10 and \cite{RoWa22} Lemma 1.4). It states the following. Let $X_t \coloneqq \lbrace x \in X \vert d_F(x) \ge t \rbrace$. Furthermore let $c \colon \lbrack 0, 1 \rbrack \to C$ be a geodesic and $\gamma_0$ a shortest geodesic from $c(0)$ to $\partial X_t$ perpendicular to $C$. Then there exists a minimal geodesic $\gamma_1$ from $c(1)$ to $\partial X_t$, such that $\gamma_0$, $c$ and $\gamma_1$ bound a totally geodesic flat strip. Since $X$ has a cohomogeneity one action, a flat strip will always intersect some set of positive curvature if $t$ is chosen close to $0$. Hence $C$ is a point.
\end{proof}
Lemma \ref{obstructions_lem_alexandrov} implies, that there is an orbit of maximal distance to a fixed point component $F$ in $M$, which implies that $M$ is the union of tubular neighbourhoods of $F$ and this orbit. Theorem \ref{intro_fp_hom} is now proven in the same way as the classification of positively curved fixed point homogeneous manifolds by Grove and Searle \cite{gs}.
	\section{Weyl groups}\label{secweyl}

We already know, that the Weyl group of a cohomogeneity one manifold with quasipositive curvature is finite. In this chapter we will establish bounds on the order of $W$. Together with the lower Weyl group bound this will turn out to be a powerful tool in the classification. Furthermore we will proof the Rank Lemma. To do this we first determine, which groups can act on a quasipositively curved cohomogeneity one manifold with trivial principal isotropy group and give bounds on the order of the Weyl groups of these actions.

\begin{lem}[Core-Weyl Lemma]\label{lem_weyl_coreweyl}
	Let $M$ be a compact Riemannian manifold with quasipositive sectional curvature and $G$  a compact Lie group acting isometrically on $M$ with cohomogeneity one and trivial principal isotropy group. Then $G$ has at most two components and the action is primitve. Moreover 
		\begin{align*}
		\vert W \vert \text{ divides } 2\cdot \text{\emph{rk}}\,(G) \cdot\vert G/G_0\vert \le 8
		\end{align*}
	Furthermore $G_0$ is one of the groups $S^1$, $S^3$, $T^2$, $S^1\times S^3$, \emph{$\text{U}(2)$}, $S^3 \times S^3$, \emph{$\SO(3)\times S^3$}, or \emph{$\SO(4)$}, and $M$ is fixed point homogeneous in all cases but \emph{$G_0 = \SO(3) \times S^3$}.
\end{lem}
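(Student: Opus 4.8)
The plan is to exploit linear primitivity (Lemma \ref{obstructions_lem_primitivity}(a)) together with the fact that $H$ is trivial, which forces the Lie algebras $w\fr k^- w + w\fr k^+ w$, $w \in W$, to span $\fr g$. First I would record the constraints coming from the triviality of $H$: since $K^\pm/H = K^\pm$ are spheres, each $K^\pm$ is itself a group acting transitively and almost freely on a sphere, so $K^\pm_0 \in \{1, S^1, S^3\}$ (the only groups diffeomorphic to a sphere). In particular $\dim \fr k^\pm \le 3$. Combined with the Lower Weyl Group Bound $|W| \ge 2\dim(G/H)/(l_-+l_+) = 2\dim G/(l_-+l_+)$ and linear primitivity $\dim \fr g \le |W|\cdot(\dim \fr k^- + \dim \fr k^+)$, one gets a two-sided pinching on $\dim G$, $|W|$ and the $l_\pm$. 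Next I would use the structure of the Weyl group: $W$ is dihedral, generated by the two involutions $w_\pm$, and each non-principal isotropy group along $c$ is conjugate (by an element of $N(H) = N(1) = G$, but more usefully by the core group $N(H)_c$) to $K^-$ or $K^+$; since $H$ is trivial the core $M_c^H$ is all of $M$ and the core group is $N(H)_c$, so $W \subset N(H)/H = G$ acts and the isotropy groups along $c$ are literally $w K^\pm w^{-1}$ for $w \in W$. Counting: the isotropy Lie algebras appearing along $c$ number at most $|W|$, each has dimension $\le 3$, and they must span $\fr g$; but more can be extracted because $W$ dihedral of order $2m$ means the distinct conjugates of $\fr k^-$ (resp. $\fr k^+$) number at most $m$ each.

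The heart of the argument is then a case analysis on $\rk G$ and on the possible pairs $(\fr k^-, \fr k^+)$ with $\fr k^\pm$ of dimension $0,1$ or $3$. I would argue that $\fr g$ is generated as a Lie algebra — not just as a vector space — by $\fr k^-$ and the $G$-conjugates of $\fr k^+$ (this is Primitivity, Lemma \ref{obstructions_lem_primitivity}(c), applied with trivial $H$), which severely restricts $G$: a compact Lie algebra generated by copies of $\fr{su}(2)$ and $\fr u(1)$ with the vector-space span condition forces $\fr g$ to be one of $\fr{u}(1)$, $\fr{su}(2)$, $\fr t^2$, $\fr{u}(1)\oplus\fr{su}(2)$, $\fr{u}(2)$, $\fr{su}(2)^2$, $\fr{so}(3)\oplus\fr{su}(2)$, $\fr{so}(4)$ — i.e. rank $\le 2$ and each simple factor of rank one. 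The rank bound comes from: a maximal torus $T \subset H$ is trivial, so by the Rank Lemma's mechanism (here proved independently — but I may instead argue directly that $\rk G - \rk H = \rk G \in \{0,1,2\}$ follows since $M$ is even dimensional forces a torus fixed point hence $\rk K^\pm = \rk G$ for one sign, contradicting $\dim \fr k^\pm \le 3$ unless $\rk G \le 1$; and odd dimensional gives $\rk G \le 2$). Then $|W|$ divides $2\rk(G)\cdot|G/G_0|$: I would show the two generating involutions $w_\pm$ of $W$, together with the requirement that the conjugates of $\fr k^\pm$ by powers of $w_+w_-$ must keep producing new directions until $\fr g$ is exhausted, bounds the rotation order $m$ by $\rk G$; and $|G/G_0| \le \ldots$ by noting $G$ has at most two components (the covering argument from section \ref{seccohom1}: a non-trivial $H/H_0$ is impossible as $H=1$, and extra components of $G$ would have to act on $M/G = [-1,1]$, giving at most a $\bb Z_2$). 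Finally $|G/G_0| \in \{1,2\}$ and $\rk G \le 2$ give $|W| \mid 2\cdot 2\cdot 2 = 8$.

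The last clause — that $M$ is fixed point homogeneous in all cases except $G_0 = \SO(3)\times S^3$ — I would handle by going through the eight groups individually: in each case one of $K^\pm$, being (a finite extension of) $S^1$ or $S^3$ inside $G$, has a fixed point component of codimension equal to $l_\mp + 1$ on which it acts fixed point homogeneously, \emph{unless} the normal sphere representation is the adjoint-type representation of $S^3 = \Sp(1)$ on $\bb R^3$, which is exactly the obstruction that occurs for $\SO(3)\times S^3$ (where the $\SO(3)$ factor sits as $S^3/\bb Z_2$ and acts on an $\bb S^2$ without the transitive-on-sphere upgrade being available via a normal subgroup). Concretely I would check that for each of the other seven $G_0$, the group diagram — which is essentially forced by the spanning condition and $K^\pm/H = \bb S^{l_\pm}$ — exhibits a singular orbit $B_\pm$ that is a fixed point component of a subgroup acting transitively on its normal sphere.

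\textbf{Main obstacle.} The delicate point is the divisibility statement $|W| \mid 2\rk(G)|G/G_0|$ rather than a mere inequality: extracting exact divisibility (not just $|W| \le 8$) requires understanding precisely how $W$, as a subgroup of $N(H)/H$, intersects $G_0$ and how the dihedral rotation subgroup sits inside a maximal torus — essentially identifying the rotation part of $W$ with a subgroup of the Weyl-group-theoretic data of $G$. This is where the "knowledge of the Weyl group" alluded to in the introduction is really needed, and it is the step I expect to require the most care; the rest is organized bookkeeping once the list of eight groups is pinned down.
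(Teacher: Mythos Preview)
Your proposal has a genuine gap at the step where you bound $\rk G$. You offer two routes: either invoke ``the Rank Lemma's mechanism'', or argue directly that a torus action on an even-dimensional $M$ has a fixed point. The first is circular in this paper --- the Rank Lemma (Lemma~\ref{obstructions_lem_rank}) is proved \emph{using} the Core-Weyl Lemma, not the other way around. The second is precisely the positive-curvature argument that does \emph{not} carry over to quasipositive curvature; the introduction and the discussion after Lemma~\ref{obstructions_lem_rank} say this explicitly. So neither route is available, and without a rank bound your case analysis cannot get started.

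What the paper actually does is quite different and is the main new idea you are missing. Before attacking the group list, it proves a separate lemma (Lemma~\ref{lem_weyl_weylcommute}): when $H$ is trivial and $G$ is connected, the generators $w_-$ and $w_+$ of $W$ \emph{commute}, so $|W|\le 4$ outright. The argument is not structural but analytic: one perturbs the cohomogeneity-one metric to show that the order of $w_- g w_+ g^{-1}$ is finite for all $g$ in an open set; then, embedding $G$ in $\U(N)$ via Peter--Weyl, one observes that the characteristic polynomial of $w_- g w_+ g^{-1}$ takes values in a countable set, hence is locally constant, hence (by analyticity) globally constant. Choosing $g$ so that $w_-$ and $gw_+g^{-1}$ lie in a common torus gives order $2$. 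With $|W|\le 4$ in hand, linear primitivity gives $\dim G \le 2(l_-+l_+)$, and a short projection argument onto the centraliser of $w_-w_+$ gives $\rk G \le \dim K^- + \dim K^+ \le 6$; the cases $\rk G \in \{3,\dots,6\}$ are then ruled out by hand, and $\rk G \le 2$ reduces to the positive-curvature argument in \cite{gwz}.

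You have also misidentified the hard step: the divisibility $|W|\mid 2\rk(G)|G/G_0|$ is not the delicate part --- once $|W|\le 4$ and the short list of groups is established, it follows from the analysis already in \cite{gwz}. The delicate part is getting $|W|\le 4$ in the first place without the Rank Lemma.
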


In \cite{gwz} Lemma 3.2 the same result was proven for positively curved cohomogeneity one manifolds. The proof reduces to Lie groups of rank at most two by the Rank Lemma. In the case of quasipositive sectional curvature we cannot use the Rank Lemma at this point. We will see later, that it also follows from Lemma \ref{lem_weyl_coreweyl}. For the remaining section let $M$ be a quasipositively curved cohomogeneity one $G$-manifold with group diagram $H \subset \lbrace K^-,K^+\rbrace \subset G$. We will now start the proof of the Core-Weyl Lemma, with giving an upper bound on the order of the Weyl group, if $H$ is trivial.

\begin{lem}\label{lem_weyl_weylcommute}
	If $H$ is trivial and $G$ connected, then the Weyl group elements $w_-$ and $w_+$ commute, i.e. $\vert W \vert \le 4$.
\end{lem}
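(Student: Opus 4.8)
The plan is to convert the hypothesis $H=\lbrace e\rbrace$ into rigidity for $K^\pm$ and the two reflections $w_\pm$, dispose of a trivial case, and then argue by contradiction with the Partial Frankel Lemma \ref{obstruction_lem_partial} applied to the minimal arc of $c$ between the two singular orbits, with $c$ chosen so as to pass through the point of positive curvature.

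First, since $H$ is trivial, the normal spheres $K^\pm/H=K^\pm=\bb S^{l_\pm}$ are themselves Lie groups, so $K^\pm\in\lbrace\bb Z_2,\bb S^1,\bb S^3\rbrace$, and not both are trivial. Hence $w_\pm$ is the unique element of order two of $K^\pm$; it is central in $K^\pm$ and acts as $-\Id$ on the normal space $\nu_{p_\pm}$, so antipodally (in particular freely) on $K^\pm/H$. Identifying $W$ with the finite dihedral stabiliser $G_c\subset G$ of $c$, we have $W=\langle w_-,w_+\rangle$; writing $\sigma=w_-w_+$ and $n=\mathrm{ord}(\sigma)$, the claim $\vert W\vert\le4$ is equivalent to $n\le 2$, i.e.\ to $w_-w_+=w_+w_-$. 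If one of $w_\pm$ is central in $G$ it commutes with the other and we are done, so from now on assume neither is central. Then each $M^{w_\pm}$ is totally geodesic, proper, and, as no principal isotropy group contains $w_\pm$, meets no principal orbit, so the component through $p_\pm$ is the orbit of $(Z_G(w_\pm))_0$ inside $B_\pm$. Equivalently, by $K^\pm$-equivariance of the second fundamental form $\II$ of $B_\pm$, one has $\langle\II(v,v),\dot c(\pm1)\rangle=0$ for every $v\in T_{p_\pm}B_\pm$ lying in a single $\Ad(w_\pm)$-eigenspace.

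Now suppose $n\ge 3$. Choose $c$ so that it passes through the point $q$ of positive curvature. The arc $c|_{[-1,1]}$ is a minimal geodesic from $B_-$ to $B_+$, orthogonal to both. Suppose we can find a parallel field $V$ along $c|_{[-1,1]}$ with $V(-1)$ in a single $\Ad(w_-)$-eigenspace of $T_{p_-}B_-$, with $V(1)$ in a single $\Ad(w_+)$-eigenspace of $T_{p_+}B_+$, and with $V$ not a multiple of $\dot c$. For such a $V$ the boundary terms in the second variation vanish by the previous paragraph, while $\int_{-1}^{1}\langle R(V,\dot c)\dot c,V\rangle\,dt>0$ since the integrand is non-negative everywhere and strictly positive near $q$; this contradicts the Partial Frankel Lemma, and hence $n\le 2$. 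To produce $V$ one exploits that the geodesic loop $\ell=c|_{[-1,3]}$ based at $B_-$ is preserved and reversed by $w_+$, so that parallel transport along $\ell$ is a conjugate of $dw_+$; matching an $\Ad(w_-)$-eigenspace at $c(-1)$ with its $w_+$-image at $c(3)$ is then reduced to an intersection statement for an orthogonal involution acting on $T_{p_-}B_-$, which can be settled using linear primitivity (which, via $H=\lbrace e\rbrace$, prevents the eigenspaces from sitting in general position) together with the disjointness, for $n\ge 3$, of the relevant fixed point components in $B_-$.

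I expect the main obstacle to be exactly this last construction. In the positively curved case the second variation is negative for essentially any admissible $V$, so only the \emph{existence} of one needs to be arranged; here positivity of the curvature integral is available only because $c$ meets the single good point, and the naive count $\dim M^{w_-}+\dim M^{w_+}>\dim M$ need not hold. Combining the $w_\pm$-symmetry of $c$ with linear primitivity to pin down a parallel field obeying the eigenspace conditions at both ends is the step with no counterpart in \cite{gwz}, and is where the weight of the argument lies.
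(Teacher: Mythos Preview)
Your approach via Partial Frankel is genuinely different from the paper's, but the proof is incomplete at precisely the point you flag. You reduce the problem to producing a parallel field $V$ along $c|_{[-1,1]}$ with $V(\pm1)$ lying in a single $\Ad(w_\pm)$-eigenspace of $T_{p_\pm}B_\pm$, and then say this ``is where the weight of the argument lies'' without carrying it out. This is not a routine verification: parallel transport $P\colon T_{p_-}M\to T_{p_+}M$ is a metric-dependent isometry over which you have no algebraic control, and the naive dimension count fails badly (the largest $\Ad(w_\pm)$-eigenspace in $T_{p_\pm}B_\pm$ has dimension at most $\dim G-l_\pm$, while you need a sum exceeding $\dim M=\dim G+1$). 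Your proposed remedy---linear primitivity plus the $w_+$-symmetry of the loop $c|_{[-1,3]}$---relates parallel transport along $c|_{[-1,3]}$ to $dw_+$, but still leaves you needing to match eigenspaces of two \emph{different} involutions through an unknown isometry, and I do not see how linear primitivity (a statement about sums of $\Ad$-conjugates of $\fr k^\pm$ inside $\fr g$) constrains this. Nor is it clear where the hypothesis $n\ge 3$ enters in a way that makes the construction succeed.

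The paper sidesteps the geometry entirely. Since $H=e$ gives $N(H)=G$, every $g\in G$ yields a legitimate cohomogeneity one diagram with Weyl group $\langle w_-, gw_+g^{-1}\rangle$; a perturbation argument shows this group is finite for $g$ in a neighbourhood $U$ of $e$. The key step is purely algebraic: embedding $G\hookrightarrow\U(N)$ via Peter--Weyl, the characteristic polynomial of $w_-gw_+g^{-1}$ has coefficients in the countable field generated by roots of unity whenever the element has finite order, so by connectedness of $U$ the order is constant on $U$, and by real-analyticity constant on all of $G$. Choosing $g$ so that $gw_+g^{-1}$ lies in a maximal torus containing $w_-$ forces this constant order to be $2$, hence $w_-w_+$ has order $2$ for the original diagram as well. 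This argument uses no curvature beyond the finiteness of $W$ for nearby metrics, and avoids any construction of parallel fields.
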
 

\begin{proof}
	We first want to proof, that the Weyl group of any cohomogeneity one metric on $M$ is finite. This means, that for each $g \in G$ the corresponding Weyl group $W(g)$ generated by $w_-$ and $gw_+g^{-1}$ is finite. For the following we do not need to assume that $H$ is trivial: It is straightforward to show, that we can find a neighbourhood $U \subset N(H)_0$ of $e$ and cohomogeneity one metrics on $M$, which are close to the original metric and still have quasipositive curvature, such that $p_-$ and $g.p_+$ are joint by a minimal geodesic for each $g \in U$. In particular it is enough to vary the metric on a small tubular neighbourhood of a principal orbit, where the curvature is positive. If we now assume that $H = e$, then $N(H) = G$. Therefore the order of $w_-gw_+g^{-1}$ is finite or each $g \in U$.\\
	We claim that, if for two involutions $a$ and $b$ in a compact connected Lie group $G$ the order of $agbg^{-1}$ is finite for each $g$ in an open connected subset $U$, then the order of $agbg^{-1}$ is constant in $U$:
	By the Peter-Weyl Theorem any Lie group can be embedded into $\U(N)$ for some $N$ large enough. We fix an embedding $G \subset \U(N)$.
	Let $A \in \U(N)$ be a matrix of finite order. Then the coefficients $k_0(A), \ldots, k_N(A)$ of the characteristic polynomial $\chi_A$ are polynomials in complex roots of unity, which generate a countable subset of $\bb C$.
	The by continuity of the coefficients, the map $r\colon G \to \bb C^{N+1},g \mapsto (k_0(agbg^{-1}),\ldots, k_N(agbg^{-1}))$ is continuous. Since each $agbg^{-1}$ has finite order for $g \in U$, the image of $r$ lies in a countable subset of $\bb C^{N+1}$ and hence $r\vert_U$ is constant, since $U$ is connected. This proves, that the characteristic polynomial of $agbg^{-1}$ is constant in $g$ and therefore $agbg^{-1}$ are conjugate to each other in $\U(N)$. Hence the order of $agbg^{-1}$ is constant in $g$.\\
	By the above claim the map $r$ is constant in an open neighbourhood of $e$. Since it is analytical in a real analytic structure of $G$, it must be constant on all of $G$ and hence $agbg^{-1}$ has constant order in $g \in G$. Now we can pick $g \in G$, such that $w_-$ and $gw_+g^{-1}$ are contained in the same maximal torus and therefore the order of $w_-gw_+g^{-1}$ is two. But then also $w_-w_+$ has order two, which implies, that $w_-$ and $w_+$ commute.
\end{proof}
\begin{proof}[Proof of Lemma \ref{lem_weyl_coreweyl}]
	First note, that both $K^\pm$ are diffeomorphic to spheres, since $H$ is trivial. Therefore $K^\pm \in \lbrace \bb Z_2, S^1,S^3\rbrace$ and at most one of $K^\pm$ is $\bb Z_2$. $G$ has at most two components. If $G$ is not connected, then the Weyl group of $G_0$ has index $2$ in $W$ and the bound follows from the connected case. Consequently, we can focus on connected Lie groups $G$.\\
	Firt note that $\emph{\rk} G \le \dim K^- + \dim K^+$, which proves as follows:
	Denote the Lie algebras of $K^\pm$ by $\fr k^\pm$. Let $ w_0 \in W$ be the product of $w_-$ and $w_+$ and $Z(w_0) $ the centraliser of $w_0$. Since $w_0$ is contained in a maximal torus $\rk G = \rk Z(w_0)$ and we can proof the claim for $Z(w_0)$. Let $\mathfrak z(w_0)$ be the Lie algebra of $Z(w_0)$. Using an arbitrary biinvariant metric on $G$ we can pick a projection $P \colon \mathfrak g \to \mathfrak z (w_0) $, which commutes with $\Ad_{w_0}$.  We have $\mathfrak g = \sum_{i=0}^{1}\Ad_{w_0^ {i}}(\fr k^-)+\Ad_{w_0^{i}}(\fr k^+)$ by linear primitivity. Hence 
	\begin{align*}
		\mathfrak z(w_0) &= P(\mathfrak g) = P\left(\sum_{i=0}^{1}\Ad_{w_0^i}(\fr k^-)+\Ad_{w_0^i}(\fr k^+)\right)\\
		&= \sum_{i=0}^{1}\Ad_{w_0^i}(P(\fr k^-))+\Ad_{w_0^i}(P(\fr k^+)) = P(\fr k^-)+P(\fr k^+)
	\end{align*}
	The last equality holds, since $P(\fr k^\pm) \subseteq  \mathfrak z(w_0)$. 
	
	By Lemma \ref{lem_weyl_weylcommute} and linear primitivity, we also get a bound on the dimension of $G$: $\dim G \le 2\cdot (l_-+l_+)$. The proof of \cite{gwz} Lemma 3.2  already handles the cases with $\rk G \le 2$ and applies here, if we assume $\rk G \le 2$. Therefore we will only rule out the cases, where $3 \le\rk G \le 6$. This implies, that at least one of $K^\pm$, say $K^-$, is isomorphic to $S^3$, and we consider the cases $l_+ = 0,1,3$.\\
	$l_+ = 0$: In this case $K^+ = \bb Z_2$, $\dim G \le 6$ and $\rk G =3$. By linear primitivity $G$ is generated by two groups isomorphic to $S^3$ and hence $G$ is semisimple. But semisimple groups of rank three are at least $9$-dimensional.\\
	$l_+ = 1$: $K^+ = S^1$, $\dim G \le 8$ and $3 \le \rk G \le 4$. The center of $G$ is at most one dimensional, since $\fr k^-$ and $\Ad_{w_+}(\fr k^-)$ are contained in the semisimple part of $\fr g$ and hence $\fr k^+$ projects onto the center. By the dimensional bound on $G$ it cannot be semisimple and must contain exactly two $3$-dimensional factors, from which one is isomorphic to $S^3$. Since the center is at most one dimensional and there is no eight dimensional semisimple group of rank 3, $G$ is at most of dimension $7$. Therefore $G$ is covered by $\tilde G = S^3 \times S^3 \times S^1$. A lift of $w_+$ in $\tilde G$ has order at most $4$. But obviously conjugating $K^- = S^3 \subset \tilde G$ with such an element leaves an at least one dimensional subgroup invariant. Hence, we have $\dim \Ad_{w_+}(\fr k^-) \cap \fr k^- \ge 1$. Since $w_-$ must be central, because $K^- = S^3$,  linear primitivity implies $\dim G \le 3 +3 +1 -1 = 6$, a contradiction. \\
	$l_+ =3$: We have $K^+ = S^3$, $\dim G \le 12$ and $3 \le \rk G \le 6$. Since $\fr k^\pm$ and $\Ad_{w_\mp}(\fr k^\pm)$ are contained in the semisimple part of $\fr g$, linear primitivity implies, that $G$ is semisimple. By the bounds on dimension and rank $G$ either consists of at most four simple rank one groups or its universal covering is isomorphic to $S^3 \times \SU(3)$. In the first case, we get again $\dim G \le 6$, since both $w_\pm$ are central. Hence we are left with the case $\tilde G = S^3 \times \SU(3)$, which has center $\bb Z_3 \times \bb Z_2$. Note that it is easy to show, that for nontrivial involutions $a$ and $b$ in $L = \SO(3)$, $\SU(3)$ and $\SU(3)/\bb Z_3$ there exists $g \in L$ such that $agbg^{-1}$ has infinite order. This only leaves the cases, where $G$ contains an $S^3$-factor. But then one of $w_\pm$, say $w_-$ is central in $G$ and linear primitivity implies $\dim G \le 9$, a contradiction. 
\end{proof}

It is now possible to proof the Rank Lemma using the Core-Weyl Lemma.
\begin{proof}[Proof of the Rank Lemma]
	Let $T \subset H$ be a maximal torus and $N(T)$ its normalizer. Then $N(T)_0/T$ acts with cohomogeneity one on a component of $M^\prime \subset M^T$. The corank of $H$ is the rank of $N(T)/T$ and the principal isotropy group $H^\prime = (N(T)_0 \cap H)/T$ is finite. The dimensions of $M$ and of $M^\prime$ have the same parity. Therefore it is enough to show the Rank Lemma for $G$ acting with finite principal isotropy group. Denote the order of $H$ by $n$. Suppose $n=1$. Then $G$ acts with trivial principal isotropy group on $M$. If $\dim M = 1$, then $G$ is finite and has therefore rank $0$. If $\dim M \ge 2$, then $G_0$ is one of the groups of the Core-Weyl Lemma and $K^\pm$ must be isomorphic to $\bb Z_2$, $S^1$ or $S^3$, such that at most one of them is $\bb Z_2$. Therefore $\rk G \le 2$. If the dimension of $M$ is even, then the rank of $G$ is odd and hence  one. Since one of $K^\pm$ is $S^1$ or $S^3$, it has full rank in $G$. If $\dim M$ is odd, then the rank of $G$ is even and hence two. Since one of $K^\pm$ is of rank one, we get the result of the Rank Lemma.\\
	Now suppose $n>1$. Let $\iota \in H$. Then $C(\iota)/\iota$ has the same rank as $G$ and acts with cohomogeneity one on a component of $M^\iota$ with principal isotropy group $\tilde H = (C(\iota)_0 \cap H)/\iota$. Therefore the order of $\tilde H$ is strictly less than the order of $H$ and the result follows by induction.
\end{proof}
  
We will finish the section with repeating results from \cite{gwz} Section 3 in terms of quasipositive curvature. All proofs carry over from the positive curvature case.

\begin{cor}[Group Primitivity]
	Suppose that $M$ admits a quasipositively curved metric. Consider any other cohomogeneity one metric on $M$, then the corresponding groups $K^-$ and $K^+$ generate $G$ as a Lie group. Equivalently $K^-$ and $n K^+n^{-1}$ generate $G$ for any $n \in N(H)_0$.
\end{cor}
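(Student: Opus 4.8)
The plan is to read this corollary as the metric-free reformulation of the Primitivity Lemma (Lemma~\ref{obstructions_lem_primitivity}(c)), using that the group diagram of a cohomogeneity one $G$-manifold is intrinsic: by the equivalence of group diagrams recalled at the end of Section~\ref{seccohom1} it is unique up to conjugation by an element $b\in G$ together with a twist of one of the singular groups by an element $a\in N(H)_0$. First I would fix the given quasipositively curved invariant metric and a horizontal geodesic for it, obtaining a group diagram $H\subset\lbrace K^-,K^+\rbrace\subset G$. Lemma~\ref{obstructions_lem_primitivity}(c) applies to this metric and gives $\langle K^-,nK^+n^{-1}\rangle=G$ for every $n\in N(H)_0$, and symmetrically with $K^-$ and $K^+$ interchanged. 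This already establishes the ``equivalently'' clause for the reference diagram, so it remains to transport the conclusion to an arbitrary invariant metric.

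Now let any cohomogeneity one invariant metric on $M$ be given, with group diagram $\bar H\subset\lbrace\bar K^-,\bar K^+\rbrace\subset G$. Since both diagrams describe the same $G$-manifold, the equivalence criterion provides, after possibly switching $K^-$ and $K^+$, elements $b\in G$ and $a\in N(H)_0$ with $K^-=b\bar K^-b^{-1}$, $H=b\bar Hb^{-1}$ and $K^+=ab\bar K^+b^{-1}a^{-1}$. Conjugating the subgroup generated by $\bar K^-$ and $\bar K^+$ by $b$ gives
\[
\langle\bar K^-,\bar K^+\rangle=b^{-1}\langle K^-,a^{-1}K^+a\rangle b=b^{-1}Gb=G,
\]
where the middle equality is Lemma~\ref{obstructions_lem_primitivity}(c) with $n=a^{-1}\in N(H)_0$. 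Since that lemma is symmetric in the two singular isotropy groups, the case in which $K^-$ and $K^+$ were switched is handled identically. Hence $\bar K^-$ and $\bar K^+$ generate $G$ in all cases, which is the first assertion; the reformulation $\langle K^-,nK^+n^{-1}\rangle=G$ for all $n\in N(H)_0$ is the defining primitivity condition and is equivalent to it by the same computation, since the pairs $(b^{-1}K^-b,\,b^{-1}a^{-1}K^+ab)$ with $b\in G$, $a\in N(H)_0$ exhaust, up to switching, all group diagrams of $M$.

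I do not expect any real obstacle: the whole mathematical content is carried by Lemma~\ref{obstructions_lem_primitivity}(c), whose genuinely harder proof is postponed to Section~\ref{secweyl} and which is used here purely as a black box. The only points needing care are formal---fixing the direction of the conjugations in the group-diagram equivalence, noting that $a^{-1}\in N(H)_0$ since $N(H)_0$ is a group, and observing that relabelling $K^-\leftrightarrow K^+$ costs nothing because the lemma treats the two singular groups symmetrically. This is exactly how the corresponding statement is deduced in \cite{gwz}, so the argument carries over verbatim from the positively curved case.
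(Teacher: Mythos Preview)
Your proposal is correct and matches the paper's treatment: the paper states that ``all proofs carry over from the positive curvature case'' and gives no further argument, so the intended proof is precisely the one you spell out---apply Lemma~\ref{obstructions_lem_primitivity}(c) to the quasipositively curved metric and then transport the conclusion via the equivalence of group diagrams recalled at the end of Section~\ref{seccohom1}. Your handling of the conjugations and the possible switch of $K^-$ and $K^+$ is accurate.
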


\begin{lem}\label{lem_weylgroup_ineffkern}
	Assume $G$ acts effectively. Then the intersection $H_-\cap H_+$ of the ineffective kernels $H_\pm$ of $K^\pm/H$ is trivial. 
\end{lem}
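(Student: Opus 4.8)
The plan is to show that $H_-\cap H_+$ is contained in the ineffective kernel of the $G$-action on $M$; since $G$ acts effectively this kernel is trivial, and the lemma follows. The first step is the elementary observation that the ineffective kernel of the action equals the largest normal subgroup $N\trianglelefteq G$ with $N\subseteq H$: on the one hand the kernel lies in every isotropy group along $c$, in particular in $H$, and is normal; on the other hand any normal $N\subseteq H$ lies in every isotropy group along $c$ (as $N\subseteq H\subseteq K^\pm$) and hence, being normal, fixes $G\cdot c=M$ pointwise. So it suffices to prove $H_-\cap H_+\subseteq N$, and since $H_-\cap H_+\subseteq H$ already, this amounts to showing that $H_-\cap H_+$ is normal in $G$.

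For the main step I would argue via linear primitivity, Lemma~\ref{obstructions_lem_primitivity}(a). Fix $n\in H_-\cap H_+$ and write $\fr g=\fr h\oplus\fr m$ for the reductive decomposition coming from a biinvariant metric. Since $n\in H$, $\Ad(n)$ preserves both summands and commutes with the orthogonal projection $\proj_{\fr m}$, and since $n$ fixes the geodesic $c$ pointwise it fixes the normal line $\bb R\dot c(0)$. Because $n\in H_-$ acts trivially on the slice sphere $K^-/H=\bb S^{l_-}$, it acts trivially on $\fr k^-/\fr h=T_{eH}(K^-/H)$, i.e.\ $\Ad(n)X-X\in\fr h$ for all $X\in\fr k^-$; applying $\proj_{\fr m}$ shows that $\Ad(n)$ restricts to the identity on $\proj_{\fr m}(\fr k^-)$, and symmetrically on $\proj_{\fr m}(\fr k^+)$. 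The analogous computation, using the conjugate isotropy groups $w\fr k^\pm w^{-1}$ occurring along $c$, shows that $\Ad(n)$ is the identity on $\proj_{\fr m}(w\fr k^\pm w^{-1})$ for a Weyl group element $w$, provided the conjugate $w^{-1}nw$ again lies in $H_-$, respectively $H_+$. Granting this for every $w\in W$, linear primitivity gives $\Ad(n)|_{\fr m}=\Id$; then the component $M^n_c$ of the fixed point set of the isometry $n$ containing $c$ satisfies $T_{c(0)}M^n_c=\fr m\cdot c(0)\oplus\bb R\dot c(0)=T_{c(0)}M$, so, being totally geodesic and hence open as well as closed in the connected manifold $M$, it equals $M$. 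Thus $n$ acts trivially, so $n=e$ by effectiveness, and this holds for every $n$.

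The hard part is therefore to control the Weyl group conjugates, i.e.\ to show that $H_-\cap H_+$ is invariant under conjugation by $N(H)$ — equivalently, together with primitivity, that it is normal in $G$. The naive attempt fails: although $H_\pm\trianglelefteq K^\pm$ and $G=\langle K^-,K^+\rangle$ by primitivity, the group $H_-\cap H_+$ need not be normalized by $K^+$, because $H_-$ is normal in $K^-$ but not in $K^+$ nor in an arbitrary conjugate of it. Instead I would work along $c$ using the dihedral structure of $W$: $W$ is generated by the involutions $w_\pm$, represented by elements of $K^\pm\cap N(H)$, so conjugation by (a representative of) $w_-$ preserves $H_-$ and conjugation by $w_+$ preserves $H_+$; combining this with the primitivity statement \ref{obstructions_lem_primitivity}(c) — that $K^-$ together with the $w$-conjugate of $K^+$ generates $G$ — one reduces the invariance of $H_-\cap H_+$ to a single Weyl reflection at a time. (When $|W|=2$ this is immediate, since then $w_-=w_+$ has a common lift in $K^-\cap K^+$, which automatically normalizes both $H_-$ and $H_+$.) Once invariance under $N(H)$ is in hand, $H_-\cap H_+$ is normalized by $K^-$, by $K^+$ and by $N(H)$, hence normal in $G$, which finishes the argument.
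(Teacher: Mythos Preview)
Your overall strategy is sound and the first half is correct: for $n\in H_-\cap H_+$ one has $\Ad(n)|_{\fr m_-}=\Id$ and $\Ad(n)|_{\fr m_+}=\Id$, and it suffices to upgrade this to $\Ad(n)|_{\fr m}=\Id$. However, your proposed resolution of the ``hard part'' does not work. You need $w^{-1}nw\in H_-$ and $w^{-1}nw\in H_+$ for \emph{every} $w\in W$, and your dihedral argument only gives $w_-^{-1}nw_-\in H_-$ and $w_+^{-1}nw_+\in H_+$; the cross-terms $w_-^{-1}nw_-\in H_+$, $w_+^{-1}nw_+\in H_-$ (and their iterates) are exactly what is missing, and nothing you wrote supplies them. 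This is a genuine obstruction: without primitivity the lemma is actually false (take $G=S^3$, $K^-=K^+$ the same circle, $H=\bb Z_n$ with $n\ge3$ odd; then $H_-\cap H_+=\bb Z_n$ while the action is effective), so some use of primitivity is unavoidable, but linear primitivity together with the Weyl group alone does not close the gap.

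The argument the paper is invoking (carried over from \cite{gwz}) bypasses Weyl-invariance entirely. The key observation is that the centralizer $\fr f=\{X\in\fr g:\Ad(n)X=X\}$ is a Lie \emph{subalgebra}. Since $\fr m_-\cup\fr m_+\subset\fr f$, the subalgebra $\fr a$ generated by $\fr m_-\cup\fr m_+$ lies in $\fr f$. Now $\fr m_\pm$ are $\ad(\fr h)$-invariant, hence so is $\fr a$, so $\fr h+\fr a$ is a subalgebra; it contains $\fr k^-\cup\fr k^+$, and by group primitivity (the corollary stated just before this lemma) $\fr k^-\cup\fr k^+$ generates $\fr g$. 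Thus $\fr g=\fr h+\fr a$. For any $X\in\fr m$ write $X=Y+Z$ with $Y\in\fr h$, $Z\in\fr a\subset\fr f$; then $\Ad(n)X-X=\Ad(n)Y-Y\in\fr h$, while $\Ad(n)X-X\in\fr m$ since $n\in H$, so $\Ad(n)X=X$. Hence $dn_{c(0)}=\Id$ and $n$ acts trivially, so $n=e$ by effectiveness. The point you were missing is to exploit that $\fr f$ is closed under brackets, which turns the additive statement $\Ad(n)|_{\fr m_-+\fr m_+}=\Id$ into the needed multiplicative one without ever touching the Weyl group.
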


\begin{lem}\label{lem_weylgroup_codim2}
	Suppose $M$ is a simply connected quasipositively curved cohomogeneity one $G$ manifold with singular orbits of codimension two. Then one of the following holds:
	\begin{enumerate}[label = (\alph*), topsep=0pt,itemsep=-1ex,partopsep=1ex,parsep=1ex]
		\item $H=\lbrace 1 \rbrace$ and both $K^\pm$ are isomorphic to \emph{$\SO(2)$}.
		\item $H = H_- = \bb Z_2$, \emph{$K^- = \SO(2)$}, \emph{$K^+ = \O(2)$}.
		\item $H=H_-\cdot H_+ = \bb Z_2 \times \bb Z_2$, and both $K^\pm$ are isomorphic to \emph{$\O(2)$}.
	\end{enumerate}
\end{lem}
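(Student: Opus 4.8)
The plan is as follows. Codimension two means that the normal discs $\bb D_\pm$ are $2$-dimensional, so $l_- = l_+ = 1$ and $K^\pm/H \cong \bb S^1$. As usual for simply connected $M$ we may assume $G$ connected and acting effectively. If $H = \{1\}$ then $K^\pm = K^\pm/H \cong \bb S^1$ is a connected $1$-dimensional compact Lie group, i.e.\ $\SO(2)$, and we are in case (a); so assume $H \neq \{1\}$. The crucial first step is that $H$ is finite, and in fact an elementary abelian $2$-group. The isotropy representation of each $K^\pm/H$ is $l_\pm = 1$-dimensional, so by the Isotropy Lemma (Lemma~\ref{obstructons_lem_isotropy}) every irreducible subrepresentation of the isotropy representation of $G/H$ is one-dimensional; hence the connected group $H_0$ acts trivially on $\fr g/\fr h$, so $\fr h$ is an ideal, $H_0$ is a connected normal subgroup of $G$, and $H_0$ therefore fixes every (dense) principal orbit pointwise and lies in the trivial ineffective kernel. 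Thus $H_0 = \{1\}$ and $\fr h = 0$; consequently $\fr k^\pm$ is one-dimensional, so $K^\pm_0 = \SO(2)$, and since $\SO(2)\cdot H/H$ is open and closed in the connected space $\bb S^1 \cong K^\pm/H$ we get $K^\pm = \SO(2)\cdot H$. Finally, the isotropy representation of $G/H$ is now the $\Ad$-action of $H$ on $\fr g$; it is faithful (the isotropy representation of $H = G_{c(0)}$ on $T_{c(0)}M = \fr g \oplus \bb R\dot c(0)$ is faithful and $H$ fixes the geodesic $c$, hence $\dot c(0)$) and decomposes into one-dimensional real pieces, so $H$ embeds into a product of copies of $\O(1) = \bb Z_2$; thus $H \cong \bb Z_2^{\,k}$ for some $k \ge 0$.

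Next I would bound $k$ using $\pi_1(M) = \{1\}$. Writing $M$ as the union of its two disc bundles and applying van Kampen's theorem gives $\pi_1(M) = \pi_1(G/H)/\langle\langle c_-,c_+\rangle\rangle$, where $c_\pm \in \pi_1(G/H)$ is the class of the normal circle $K^\pm/H$. Let $\psi\colon \pi_1(G/H) \to H$ be the surjective monodromy homomorphism of the (connected, since $G$ is connected) covering $G \to G/H$; lifting the circle $K^\pm/H = K^\pm_0/(K^\pm_0\cap H)$ to $K^\pm_0 = \SO(2)$ shows that $\psi(c_\pm)$ generates $K^\pm_0 \cap H$. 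Since $\pi_1(M) = \{1\}$ the classes $c_\pm$ normally generate $\pi_1(G/H)$, so applying $\psi$ and using that $H$ is abelian, $\psi(c_-)$ and $\psi(c_+)$ generate $H$. As each $K^\pm_0\cap H$ is a cyclic subgroup of $\bb Z_2^{\,k}$ it has order at most $2$, so $|H| \le 4$ and $k \le 2$.

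It then remains to read off the three cases; this is finite-group bookkeeping, using Lemma~\ref{lem_weylgroup_ineffkern} ($H_-\cap H_+ = \{1\}$), the fact that a compact group acting effectively and transitively on $\bb S^1$ is $\SO(2)$ or $\O(2)$ with point stabiliser $\{1\}$, respectively $\bb Z_2$, and the observation that $\mathrm{Pin}(2)$ --- the only other $\bb Z_2$-extension of $\O(2)$ --- contains no copy of $\bb Z_2\times\bb Z_2$. The case $k = 0$ is case (a). For $k = 1$: since $\psi(c_-),\psi(c_+)$ generate $H \cong \bb Z_2$, one of $K^\pm_0\cap H$, say $K^-_0\cap H$, equals $H$, so $H \subseteq K^-_0$, $K^- = \SO(2)$ and $H_- = H$; then $K^+_0\cap H = \{1\}$ (otherwise $K^+ = \SO(2)$ and $H_+ = H$, contradicting $H_-\cap H_+ = \{1\}$), so $K^+ = K^+_0\rtimes H$ is a split extension $\SO(2)\rtimes\bb Z_2$; it cannot be $\SO(2)\times\bb Z_2$ (a direct factor is normal, so that would give $H_+ = H$), hence $K^+ = \O(2)$ and $H_+ = \{1\}$ --- case (b). For $k = 2$: $\psi(c_-),\psi(c_+)$ generate $H \cong \bb Z_2\times\bb Z_2$, so both $K^\pm_0\cap H$ are nontrivial, distinct, and (acting trivially on $K^\pm/H = K^\pm_0/(K^\pm_0\cap H)$) contained in $H_\pm$; hence $|H_\pm| \ge 2$, and $H_-\cap H_+ = \{1\}$ together with $|H_-H_+| \le |H| = 4$ forces $H_\pm = K^\pm_0\cap H \cong \bb Z_2$ and $H = H_-\times H_+$; then $K^\pm/H_\pm$ acts effectively and transitively on $\bb S^1$ with point stabiliser $H/H_\pm \cong \bb Z_2$, so $K^\pm/H_\pm \cong \O(2)$, and $K^\pm$ --- a two-component $\bb Z_2$-extension of $\O(2)$ (identity component $\SO(2)$) containing $\bb Z_2\times\bb Z_2$ --- must be $\O(2)$ rather than $\mathrm{Pin}(2)$ --- case (c).

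The only place I expect to have to be careful is this last step: once the Isotropy Lemma has made $H$ finite, the rest is elementary group theory and covering-space theory, but one must identify the monodromy classes $\psi(c_\pm)$ correctly and eliminate the non-split extension $\mathrm{Pin}(2)$. Quasipositive curvature enters the argument only through the Isotropy Lemma; without a lower curvature bound $H$ need not be finite (for instance $\SU(3) \supset K^- = \U(2) \supset H = \SU(2)$ with $K^-/H \cong \bb S^1$).
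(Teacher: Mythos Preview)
Your proof is correct. The paper itself does not give a proof of this lemma; it simply states that the proof ``carries over from the positive curvature case'' in \cite{gwz}, so there is nothing in the paper to compare against beyond that reference.

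Your argument follows the natural line: the Isotropy Lemma forces all irreducible isotropy summands to be one-dimensional, so $H_0$ acts trivially on $\fr g/\fr h$, hence $H_0$ is normal and therefore trivial by effectiveness; the same one-dimensionality then forces the finite group $H$ to embed in a product of copies of $\O(1)$, i.e.\ $H\cong\bb Z_2^{\,k}$. The van Kampen step and the identification of $\psi(c_\pm)$ with a generator of $K^\pm_0\cap H$ are correct; one small point you could make explicit is why the monodromy of $G\to G/H$ is a genuine homomorphism rather than just a set map --- this covering need not be regular in general, but since the target $H$ is abelian the (anti\nobreakdash-)homomorphism $\pi_1(G/H)\cong\tilde H\twoheadrightarrow \tilde H/Z=H$ obtained via the universal cover does the job. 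Your case analysis for $k\le 2$ is also fine; in the $k=2$ case the phrase ``a $\bb Z_2$-extension of $\O(2)$'' (i.e.\ surjecting onto $\O(2)$) already silently excludes $\SO(2)\times\bb Z_2$, since the latter admits no surjection onto $\O(2)$, so together with ruling out $\mathrm{Pin}(2)$ you are done.
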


\begin{prop}[Upper Weyl Group Bound]\label{prop_weylgroup_upperweyl}
	Assume that $M$ is simply connected and $G$ connected. Then:
	\begin{enumerate}[label = (\alph*), topsep=0pt,itemsep=-1ex,partopsep=1ex,parsep=1ex]
		\item If $H/H_0$ is trivial or cyclic, we have $\vert W \vert \le 8$, if the corank of $H$ is two, and $\vert W\vert \le 4$, if the corank is one.
		\item If $H$ is connected and $l_\pm$ are both odd, then $\vert W \vert \le 4$ in the corank two case and $\vert W\vert \le 2$ in the corank one case.
		\item If none of $(N(H)\cap K^\pm)/H$ is finite, we have $\vert W \vert \le 4$ in the corank two case and $\vert W\vert \le 2$ in the corank one case.
	\end{enumerate}
\end{prop}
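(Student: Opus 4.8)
The plan is to bound $|W|$ by reducing to the core of $M$ and invoking the Core-Weyl Lemma. Recall that $W=\langle w_-,w_+\rangle$ is dihedral, with $w_\pm\in(N(H)\cap K^\pm)/H$ the two canonical involutions, and that it is finite; so $|W|=2m$ with $m=\mathrm{ord}(w_-w_+)$, and the task is to bound $m$. First I would make a harmless normalisation: the group diagram, hence $M$ and $|W|$, do not depend on the chosen horizontal geodesic, and since quasipositive curvature provides a point at which all sectional curvatures of $M$ are positive — a point which, by openness of this condition and density of the principal part $M_0$, may be taken to be principal and therefore to lie on a horizontal geodesic — I may assume that $c$ passes through such a point $p$.

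Next I would apply the Reduction Lemma with $L=H$: the core group $N(H)_c/H$ then acts isometrically and with cohomogeneity one on the core $M_c^H$, with the same Weyl group $W$. This action has trivial principal isotropy, since $H$ fixes $M_c^H$ pointwise and hence $N(H)_c\cap H=H$; and $M_c^H$ is again quasipositively curved, being a compact totally geodesic submanifold of the non-negatively curved $M$ that contains $p$, so that at $p$ its sectional curvatures form a subset of the positive ones of $M$. Hence the Core-Weyl Lemma applies to $(M_c^H,\,N(H)_c/H)$: this group has at most two components, its identity component — being one of the groups listed in that lemma — has rank at least one, and $|W|$ divides $2\cdot\rk(N(H)_c/H)\cdot|(N(H)_c/H)/(N(H)_c/H)_0|$.

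Then I would estimate the rank. Since $H$ is normal in $N(H)_c$, and $N(H)_0\subseteq N(H)_c$ so that $N(H)_c$ has the rank of $N(H)_0$,
\begin{align*}
\rk(N(H)_c/H)=\rk N(H)_0-\rk H\le\rk G-\rk H=\cork H .
\end{align*}
Combined with the bound of at most two components and $\rk(N(H)_c/H)\ge1$, this shows $|W|\le 4$ when $\cork H=1$ and $|W|\le 8$ when $\cork H=2$, which is (a) — and no hypothesis on $H/H_0$ is in fact needed for this. For (c), the assumption that both $(N(H)\cap K^\pm)/H$ are infinite means the connected group $(N(H)\cap K^\pm)_0$ has dimension $>\dim H$ and lies in $N(H)_0\cap K^\pm\subseteq N(H)_c\cap K^\pm$, so both normal spheres of the core action have positive dimension; by the structure of cohomogeneity one manifolds the principal orbit of the core action — which is just the group $N(H)_c/H$ itself, since its principal isotropy is trivial — is then connected, the component factor drops, and $|W|$ divides $2\cork H$, giving $|W|\le2$ for corank one and $|W|\le4$ for corank two.

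It remains to treat (b): here the oddness of the $l_\pm$ need not make $(N(H)\cap K^\pm)/H$ infinite (e.g. for $\bb S^{2n-1}=\SO(2n)/\SO(2n-1)$), so the above only yields the bound of (a), and one must separately rule out that the core group is disconnected. I expect this to be the main obstacle: one has to show, using the odd-dimensionality of the normal spheres $\bb S^{l_\pm}$ as in the positively curved case of \cite{gwz}, that the potential extra component of $N(H)_c/H$ cannot occur. The remaining ingredients are routine once one notes that the core of a quasipositively curved cohomogeneity one manifold is again quasipositively curved provided the horizontal geodesic is chosen through a point of positive curvature.
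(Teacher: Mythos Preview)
Your proposal follows precisely the route the paper has in mind: the paper itself gives no argument here, only the remark that ``All proofs carry over from the positive curvature case,'' deferring to \cite{gwz}; your reduction to the core via the Reduction Lemma with $L=H$ followed by the Core-Weyl Lemma is exactly that route. Parts (a) and (c) are correctly argued. The core $M_c^H$ is totally geodesic and, after translating $c$ by a suitable element of $G$ so that it passes through a principal point of positive curvature, quasipositively curved; the Core-Weyl Lemma then applies to the trivial-isotropy action of $N(H)_c/H$. The rank estimate $\rk(N(H)_c/H)\le\cork H$ together with the two-component bound gives (a), and positive dimension of both core normal spheres forces connectedness of $N(H)_c/H$, giving (c). Your side observation that the cyclicity hypothesis on $H/H_0$ plays no role in (a) is correct: once one reduces via $L=H$ rather than via a maximal torus of $H_0$, the core principal isotropy is trivial regardless of $H/H_0$, and the Core-Weyl Lemma furnishes the component bound on its own.

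For (b) you rightly flag the remaining gap, and it is a genuine one. The connectedness argument you used for (c) does \emph{not} go through: for $K^\pm/H\cong\SO(2n)/\SO(2n-1)$ the isotropy representation of $H$ on the normal slice has no trivial summand beyond the geodesic direction, so the core normal sphere on that side is $\bb S^0$, the core group acquires a second component, and one checks directly that every representative of $w_\pm$ lies in the non-identity component of $N(H)/H$. So one cannot simply ``rule out that the core group is disconnected''; a separate argument, exploiting that $l_\pm$ odd forces $\rk K^\pm=\rk H+1$, is needed as in \cite{gwz}. Since the paper itself only asserts that this carries over, your proposal is at least as detailed as the paper's own treatment, but (b) remains to be filled in.
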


	\section{The Block Theorem}\label{secblock}

In this section we want to prove the Block Theorem (Theorem \ref{intro_block}). Wilking's original result  uses the Connectedness Lemma and requires positive curvature at all points on the manifold. Therefore we will follow another approach using the following key lemma, which is a generalization of  \cite{wz} Lemma 1.3 to any cohomogeneity.
\begin{lem}[Block Lemma]\label{block_lem_blocklemma}
	Let $L$ be a compact Lie group and \emph{$(G_d,u) \in \lbrace (\SO(d),1),$} \emph{$(\SU(d),2),$ $(\Sp(d),4)\rbrace$}. Suppose $G = L \times G_d$ acts almost effectively and isometrically on a Riemannian manifold $M$ with cohomogeneity $r$, such that the principal isotropy group $H$ contains up to conjugacy a lower $k \times k$-block with $k \ge 3$, if $u = 1,2$, and $k \ge 2$, if $u = 4$. Let $k$ be maximal with this property. Suppose $M$ contains a point $p$, such that all sectional curvatures at $p$ are positive. Then $d-k \le r +1$.
\end{lem}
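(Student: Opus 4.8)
The strategy has two parts: a reduction that shrinks the block while keeping $d-k$ fixed and not increasing the cohomogeneity, and, for a minimal block, the single use of curvature, namely the positivity input behind linear primitivity. First I would observe that the set of points with all sectional curvatures positive is open and the regular set is open and dense, so I may take $p$ regular; after conjugation $B_k\subseteq G_p=H$, where $B_k$ denotes the lower $k\times k$-block of $G_d$. For a regular point $q$ the normal space $\nu_q(Gq)$ is a trivial $H$-module, hence a trivial $B_k$-module; consequently every normal geodesic issuing from $p$ lies in $M^{B_k}$, and $M^{B_k}$, being totally geodesic, still has all sectional curvatures positive at $p$. I prove $d-k\le r+1$ by induction on $k$, the base cases being $k\in\{3,4\}$ if $u=1,2$ and $k=2$ if $u=4$.

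\textbf{Inductive step} (so $k\ge5$ if $u=1,2$ and $k\ge3$ if $u=4$). Choose a circle $T\subseteq B_k$ in the standard maximal torus of the block, occupying the last two coordinate axes, resp.\ the last quaternionic axis when $u=4$. Since $T\subseteq B_k\subseteq H=G_p$, the totally geodesic submanifold $M^T$ contains $p$ and still carries positive curvature there. The identity component $N_G(T)_0$ acts on $M^T$, and modulo the subgroup acting trivially one obtains an almost effective action of a group $L'\times G_{d'}$ with $L'$ compact connected and $(G_{d'},u)\in\{(\SO(d-2),1),(\SU(d-2),2),(\Sp(d-1),4)\}$, of cohomogeneity $r'\le r$ since passing to a fixed point set does not raise the cohomogeneity. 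Its principal isotropy group contains a conjugate of the lower block $B_{k'}$ of $G_{d'}$ with $k'=k-2$, resp.\ $k'=k-1$ when $u=4$, because $N_H(T)/T$ contains $N_{B_k}(T)/T$. Moreover $k'$ is still maximal: if the reduced principal isotropy contained a lower block of size $m>k'$, then, read inside $G_d$, the group $H$ would contain that block together with $B_k$; these are two blocks of $G_d$ sharing at least one coordinate axis, so they generate a lower block of size at least $k+1$ inside $H$, contradicting the maximality of $k$. The inductive hypothesis applied to $M^T$ then gives $d'-k'\le r'+1$, i.e.\ $d-k\le r+1$.

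\textbf{Base cases --- the main obstacle.} When $B_k$ is minimal the dimension $d$ is still unbounded, so this is where the real work is. Every normal geodesic $c$ through $p$ lies in $M^{B_k}$, so each isotropy algebra along $c$ is a $B_k$-invariant subalgebra of $\fr g$ containing $\fr b_k$; and a $B_k$-invariant subalgebra through $\fr b_k$ must, by closure under brackets, contain a whole sub-block $\fr{so}(U\oplus\bb R^k)$ (and similarly for $u=2,4$) for some subspace $U\subseteq\bb R^{d-k}$. By \cite{w3} Corollary~10 --- the input behind Lemma~\ref{obstructions_lem_primitivity}(a) --- the positive curvature at $p$ forces finitely many such algebras, attached to the orbit strata adjacent to the regular stratum through $p$, to span $\fr g$; projecting onto the standard-module isotypic component of the Lie algebra of $G_d$ under $B_k$, which consists of $d-k$ copies of the standard module, the spanning condition becomes $\sum_j U_j=\bb R^{d-k}$ over the relevant strata $j$.

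\textbf{What I expect to be hard.} The delicate point is the quantitative cohomogeneity-$r$ form of linear primitivity: an $r$-dimensional orbit space has only so many faces meeting an interior point, each enlarges the block by a controlled amount, and the total comes out to be $r+1$. In cohomogeneity one this is precisely Lemma~\ref{obstructions_lem_primitivity} together with the finiteness of the Weyl group; the replacement, for general $r$, of the horizontal geodesic and its finite Weyl group by the faces of the $r$-dimensional orbit space is the genuinely new content over \cite{wz}, and making this count sharp (rather than just finite) is the step I expect to cost the most. The sharper value $k\ge2$ in the symplectic case will come out of the same analysis, reflecting that $\Sp(2)$ already acts transitively on a sphere, so its standard block leaves no slack; once the count is in place, the three families $u=1,2,4$ are handled uniformly by the module computation above.
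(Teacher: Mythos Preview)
Your inductive reduction is sound but, as you yourself note, buys nothing: the base cases still have $d$ unbounded, so the entire content is there. And in the base cases you do not actually prove anything --- you sketch a hoped-for ``cohomogeneity-$r$ linear primitivity'' in which the faces of the orbit space adjacent to the regular stratum would enlarge the block by one each and number at most $r+1$. Neither claim is established. For $r>1$ the orbit space is not an interval and its face structure near a regular point is not constrained in the way you need; the analogue of Lemma~\ref{obstructions_lem_primitivity}(a) you invoke from \cite{w3} is a cohomogeneity-one statement about a single geodesic and its finite Weyl group, and you have not shown how to turn it into the sharp count $d-k\le r+1$. So the proposal has a genuine gap exactly where the curvature must enter.

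The paper's proof is entirely different and avoids any stratification of the orbit space. It works at a single regular point $p$ with positive curvature and uses the Gauss equations for the orbit $G\cdot p$. Write the metric on the orbit as $Q(P_x\,\cdot\,,\,\cdot\,)$ for an $\Ad_H$-equivariant endomorphism $P_x$ of $\fr q=\fr h^\perp$, and let $P_1,\ldots,P_r$ be its derivatives in an orthonormal normal frame. The $d-k$ copies of the standard $B_k$-module sitting in $\fr g_d/\fr n(\fr b_k)$ can be arranged (conjugating by the upper $(d-k)\times(d-k)$ block) to be eigenspaces of $P$, and Schur's lemma forces each $P_l$ to act on the $j$-th copy by a scalar $\lambda_j^l$. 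Choosing $X=(e_1)_{j_1}$, $Y=(e_2)_{j_2}$ with $j_1\neq j_2$ --- commuting eigenvectors of $P$ --- the intrinsic curvature term vanishes and the Gauss equation collapses to
\[
g(R(X,Y)Y,X)=-\tfrac14\sum_{l=1}^r\lambda_{j_1}^l\lambda_{j_2}^l=-\tfrac14\langle\lambda_{j_1},\lambda_{j_2}\rangle.
\]
Positivity at $p$ forces $\langle\lambda_{j_1},\lambda_{j_2}\rangle<0$ for all $j_1\neq j_2$, and at most $r+1$ vectors in $\bb R^r$ can have pairwise negative inner products. That is the whole argument: one curvature computation plus one elementary Euclidean fact, no induction and no orbit-space combinatorics.
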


\begin{proof}
	Without loss of generality we can assume, that $p$ is a regular point of the $G$-action, since those points form an open and dense set in $M$. Moreover we can assume, that $H$ contains the lower $k \times k$-block. Since $k$ is maximal, $B_k$ is a normal subgroup of $H_0$ by \cite{w2} Lemma 2.7. For some small $\epsilon > 0$ we set $S_p \coloneqq \nu^{<\epsilon}G.p$. By the Slice Theorem $\exp(\nu^{< \epsilon}G.p) \cong G \times_{H} S_p$, if $\epsilon$ is small enough. All $q \in S_p$ have the same isotropy group $H$, because $H$ is the principal isotropy group. Therefore $G \times_HS_p = G/H \times S_p$. Note that for any geodesic $c$ orthogonal to $G.p$ the orbit $G.c(t)$ is orthogonal to $c$. We now pick a biinvariant metric $Q$ on $\fr g$. Let $\fr q$ be the orthogonal complement of $\fr h$ with respect to $Q$, so $\fr g = \fr h \oplus \fr q$. Since all points in $S_p$ have the same isotropy group, we can pick a smooth family of symmetric $\Ad_H$-invariant Endomorphisms 
	$(P_x \colon \fr q \to \fr q)_{x \in S_p}$,
	such that $g_x(X^*,Y^*) = Q(P_xX,Y)$ for all $X,Y \in \fr q$, where $X^*$ denotes the action field.
	We will now write down the Gauss equations for the principal orbit $G.p$. First pick an orthonormal basis $(e_1, \ldots,e_r)$ of $\nu_pG.p$. We compute the Shape operators $S_k (X) \coloneqq S_{e_k} (X) = -(\nabla_{X^*} e_k)^t = -(\nabla_{e_k}X^*)^t$ of $G.p$ in these directions:
	\begin{align*}
	-g_p(S_kX^*,Y^*) &= g_p(\nabla_{e_k}X^*,Y^*) = g_p(X^*,\nabla_{e_k}Y^*) \\&= \frac{1}{2} e_k(g(X^*,Y^*)) = \frac{1}{2} e_k (Q(P_xX,Y)) = \frac{1}{2} Q(\partial_k P_pX,Y)
	\end{align*}
	To shorten the notation we will write $P \coloneqq P_p$ and $P_k \coloneqq \partial_kP_p$. By the Gauss equations now the following holds, where $R^\prime$ is the intrinsic curvature of the Orbit $G.p$:
	\begin{align*}
	g(R(X,Y)Z,W) =& \,g(R^\prime(X,Y)Z,W) + g(\II (X,Z),\II(Y,W)) - g(\II(Y,Z),\II(X,W))\\
	=& \,g(R^\prime(X,Y)Z,W) + \sum_{k=1}^{r}g(S_kX,Z)g(S_kY,W) - g(S_kY,Z)g(S_kX,W)\\
	=& \,g(R^\prime(X,Y)Z,W) + \frac{1}{4}\sum_{k=1}^{r}\lbrack Q(P_kX,Z)Q(P_kY,  W) \\
	&- Q(P_kY,Z)Q(P_kX,W) \rbrack
	\end{align*}
	So for $Y=Z$ and $X=W$, we get:
	\begin{align}\label{gauss}
	g(R(X,Y)Y,X) = g(R^\prime(X,Y)Y,X) + \frac{1}{4}\sum_{k=1}^{r}Q(P_kX,Y)^2 - Q(P_kY,Y)Q(P_kX,X)
	\end{align}
	The summand $g(R^\prime(X,Y)Y,X)$ was already analysed in \cite{gz}. It vanishes if $X$ and $Y$ are commuting eigenvectors of $P$ (see also \cite{wz}). We will now make use of the assumption, that $G = L \times G_d$ and that $H$ contains a lower $k \times k$-block:\\
	Let $N(B_k)$ be the normalizer of $B_k$. Since $B_k$ is normal in $H_0$, $H_0 \subset N(B_k)$. Let $\fr n$ be the Lie algebra of $N(B_k)$. The orthogonal complement $\fr p$ of $\fr n$ consists of the direct sum of $d-k$ pairwise irreducible standard representations of $B_k$ and is contained in $\fr q$. So $\fr p = \bigoplus_{l=1}^{d-k} \bb K^{k}$, with $(\bb K,u) \in \lbrace (\bb R,1), (\bb C,2), (\bb H,4)\rbrace$. Since $k \ge 3$, if $u \in {1,2}$, respectively $k \ge 2$, if $u = 4$, the upper left $(d-k) \times (d-k)$-block in $G_d$ acts transitively on the irreducible representations of $B_k$. Therefore we can assume (by conjugating $P$), that the upper right $(d-k) \times k$-block, that represents $\fr p$ consists of eigenvectors of $P$. Since all $P_x$ are $\Ad_H$-equivariant, also $P_l$ is $\Ad_H$-equivariant for each $l \in \lbrace 1,\ldots r\rbrace$. Now denote by $(e_i)_j$ the $i$-th Standard unitvector in the $j$-th row. Since $P_l$ is $B_k$-invariant and $k$ is large enough, the concatenation of $P_l$ and the projection to a row is multiplication with an element of $\bb K$ by a more general version of Shur's Lemma (cf. Theorem 6.7, \cite{bt}). Since $B_k$ acts transitively on the unit-sphere in $\bb K^k$ this implies:
	\begin{align*}
	Q(P_l(e_{i_1})_{j_1},(e_{i_2})_{j_2}) &= 0, \text{ if }i_1 \neq i_2\\
	Q(P_l(e_{i})_{j}, (e_i)_j) &= Q(P_l(e_1)_j,(e_1)_j) = \lambda_j^l
	\end{align*}
	Now we choose for $j_1 \neq j_2$: $X = (e_1)_{j_1}$ and $Y=(e_2)_{j_2}$. Note, that $X$ and $Y$ are commuting eigenvectors of $P$. This reduces (\ref{gauss}) to
	\begin{align*}
	g(R(X,Y)Y,X) = -\frac{1}{4}\sum_{l=1}^{r}\lambda_{j_1}^l\lambda_{j_2}^l
	\end{align*}
	We set $\lambda_{j} \coloneqq (\lambda_j^1, \ldots, \lambda_j^r)$ for $j \in \lbrace 1, \ldots, d-k\rbrace$ and denote by $\langle .,. \rangle$ the standard scalarproduct on $\bb R^{r}$. Since all sectional curvatures at $p$ are positive, we get:
	\begin{align*}
	\langle \lambda_{j_1}, \lambda_{j_2} \rangle < 0, \text{ if } j_1 \neq j_2
	\end{align*}
	An euclidean vector space of dimension $r$ contains at most $r+1$ vectors, which have pairwise negative scalar product. Hence $d-k \le r+1$
\end{proof}

\begin{proof}[Proof of the Block Theorem] 
	Let $H\subset \lbrace K^-, K^+ \rbrace \subset G$ be the group diagram of the $G = L \times G_d$ action, such that $B_k$ is contained in $H$, where $k$ is maximal among all subgroups contained in $H$ conjugated to a lower $k \times k$ block. This implies, that $B_k$ is normal in $H_0$. The general strategy is to compute the possible group diagrams of essential actions and compare them to the ones contained in Tables \ref{appendix_table_cohom_one_even} and \ref{appendix_table_cohom_one_odd_sphere}.
	By the following we can assume the action to be essential:
	If a normal subgroup acts fixed point homogeneous then $M$ must be a rank one symmetric space with a linear action by Theorem \ref{intro_fp_hom}. Hence we can assume, that a normal subgroup $G^\prime$ of $G$ acts orbit-equivalent. Suppose $G^\prime$ does not contain $G_d$ as a normal factor. Then $L$ acts orbit equivalent. This implies that $H$ projects to $G_d$, which cannot happen, since $B_k$ is normal in $H_0$. From now on we assume the action of $G$ on $M$ to be essential.
	
	By the Block Lemma, we can assume that $d-k=2$, since for $d-k = 1$ the action is not essential. 
	We fix the following notation: For a subgroup $W \subseteq G$ define $W_L \coloneqq L \cap W$, $W_{G_d} \coloneqq G_d \cap W$. Furthermore there exists a connected and compact diagonal subgroup $W_{\Delta}$, such that $W_0 = (W_L \cdot W_\Delta \cdot W_{G_d})_0$. By $U_2$ we denote the upper $2 \times 2$ block.\\
	Since the action is spherical, $B_k$ is normal in $H$. The normalizer of $B_k$ is given as follows:
	\begin{align*}
		N(B_k)= S\begin{pmatrix} \text O(2) & 0\\ 0 & \text O(k)\end{pmatrix},\,  S\begin{pmatrix} \text U(2) & 0\\ 0 & \text U(k)\end{pmatrix} \text{ or } \begin{pmatrix} \text{Sp}(2) & 0\\ 0 & \text {Sp}(k)\end{pmatrix}
	\end{align*}
	There exists a closed subgroup $\bar H \subset G$ commuting with $B_k$, such that $H_0 = \bar H \cdot B_k$. Furthermore one of the singular isotropy groups, say $K^-$, contains a $(k+1) \times (k+1)$-block as a normal factor, because the standard $B_k$ representation must degenerate by the Isotropy Lemma. Hence $K^-_0 = \bar K^- \cdot B_{k+1}$ and $l_- = uk + u -1 > 1$. Since $B_{k+1}$ acts transitively on $\bb S^{l_-}$, the Weyl group element $w_-$ can be represented by an element in $G_d$. Therefore $w_-\proj_L(K^+)w_- = \proj_L(K^+)$. Since $H$ must project onto $\bar K^-$ by the classification of homogeneous spheres, we have $\proj_L(K^-) = \proj_L(H) \subseteq \proj_L(K^+)$. By linear primitivity, $K^+$ projects to $L$ and hence $K^+_L = \lbrace e \rbrace$, since $K^+_L$ is a normal subgroup of $L$. Furthermore we note, that $H$ cannot have corank $0$, since then both $\bb S^{l_\pm}$ are even dimensional, which implies $\bar K^+ = \bar K^- = \bar H = \lbrace e \rbrace$. But $B_k$ has at least corank $1$.\\
	First we exclude the case, where $B_k$ acts non trivially on both normal spheres:\\			
	Suppose, that $B_k$ does not act trivially on $\bb S^{l_+}$. Then all groups are connected. Furthermore $\proj_L(K^-) = \proj_L(H)=\proj_L(K^+)$ and hence  $L=\lbrace e \rbrace$, since the action is essential. Moreover, $l_- = l_+>1$, since the $B_k$-representations can only degenerate in a $(k+1)\times (k+1)$ block, which implies, that all groups are connected. \\
	If $u=1$, then $B_k$ has corank $1$. This immediately implies, that $M$ is even dimensional. Also $H= B_k$ and $K^\pm \cong B_{k+1}$. But $U_2$ acts transitively on the irreducible $B_k$ representations in $G_d$. Since $U_2 \subset N_0(H)$, this contradicts primitivity.\\
	If $M$ is odd-dimensional and $u = 2,4$, then $B_k$ has corank $2$ in $G_d$. Hence again $H=B_k$ and $K^\pm \cong B_{k+1}$ and we have a contradiction as before.\\
	Therefore we are left with the cases, where $\dim M$ is even and $u = 2,4$. \\
	$u=2$: Since both spheres are odd-dimensional, $K^\pm$ both have corank $0$ and hence $K^\pm \cong \U(k+1)$ and $H = S^1 \cdot B_k$, where $S^1$ is in the centralizer of $B_k$ and therefore $S^1 = \diag(z^n,z^m,z^l,\ldots,z^l)$. 
	$S^1$ acts with weights $n -m$, $n-l$ and $m-l$. If $n = m$, then again $U_2$ is contained in the normalizer of $H$, contradicting primitivity. Therefore we can assume $n \neq m$. But in this case $B_k$ acts trivially on the space with weight $n-m$ and hence it cannot degenerate in one of the normal spheres.\\
	$u=4$: In this case either $K^\pm \cong S^1 \times \Sp(k+1)$ and $H = S^1 \cdot \Sp(k)$ or $K^\pm \cong \Sp(1) \times \Sp(k+1)$ and $H = \Sp(1)\cdot\Sp(k)$. So first assume the case $H = S^1\cdot \Sp(k)$, with $S^1= \diag(z^n,z^m,1,\ldots,1)$, which acts with weights $2n$, $2m$, $n\pm m$, $m$ and $n$. If $n = \pm m$ the upper left $\SU(2)$ block is contained in the normalizer of $H$. But then $N(H)_0/H \cong \SO(3)$ acts with trivial principle isotropy group on a component of $M^H$, contradicting the Core-Weyl Lemma. If $n \neq \pm m$, then there can be at most two distinct non trivial weights in the upper $2 \times 2$ block, by the Isotropy Lemma. Hence $m =0$, and $B_\pm$ are both totally geodesic, since $K^\pm$ both contain the central element of $\Sp(k+2)$. This contradicts Frankel's Theorem, since $8k+7 = \dim G/H \le l_-+l_+ = 8k+6$.\\
	We are left with the case $H = \Sp(1) \cdot \Sp(k)$. By the Isotropy Lemma the $\Sp(1)$ factor in $H$ can only have $3$-dimensional representations in $U_2$ and hence must be given by $\diag(a,a,1, \ldots,1)$ and therefore acts non trivially on both normal spheres. Hence $K^\pm \cong \Sp(1) \cdot \Sp(k+1)$. Note that $\SO(2) \subset \Sp(2)$ acts transitively on the irreducible representations of $H$, since they must be of real type. Therefore the action is not primitive, since $N(H)$ contains $\SO(2)$.
	
	From now on we assume $B_k$ to act trivially on $\bb S^{l_+}$. Hence, we can assume, that $K^+ = \bar K^+ \cdot B_k$, $H_0 = \bar H \cdot B_k$ with $\bar H$ and $\bar K^+$ contained in the centralizer  of $B_k$ and $\bb S^{l_+} \cong \bar K^+/\bar H$. Also $\bar K^+ = K^+_\Delta \cdot \bar K^+_{G_d}$ and $\bar K^+$ projects onto $L$. Since $B_k$ has corank $1$ in $G_d$, if $u=1$, and $2$, if $u =2,4$, we have the following: In even dimensions $\rk \bar H - \rk L$ equals $0$, if $u = 1$, and $1$, if $u=2,4$. In odd dimensions $\rk \bar H -\rk L $ equals $-1$, if $u = 1$, and $ 0$, if $u =2,4$. Furthermore $\bar H$ has rank at most $1$, if $u = 1$, and at most $2$, if $u = 2,4$. We will distinguish the cases $0 \le \rk \bar H \le 2$.\\
	$\rk \bar H =0$: If $u=1$, then $\rk L =0$, if $\dim M$ is even, and $1$, if $\dim M$ is odd, by the above. In both cases $H_0 = B_k$ and hence $K_0^- = B_{k+1}$ and $K^+ = \SO(2) \cdot B_k$. Furthermore $H/H_0 \cong K^-/K^-_0 \cong \lbrace e \rbrace, \bb Z_2$. Suppose $H$ is connected. Then the Weyl group element $w_+$ can be represented by an element projecting to $  \diag(-1,-1,1, \dots,1)$. Therefore $w_+ K^- w_+ = K^-$ and hence by linear primitivity $2k+1 \le \dim G/H \le l_-+2l_+ = k+2$, a contradiction. Thus $H$ is not connected and we have the $\SO(k+2)$ action on $\bb C\bb P^{k+1}$, if $\dim M$ is even, and the $\SO(2) \SO(k+2)$ action on $\bb S^{2k+3}$, if $\dim M$ is odd.\\
	If $u=2,4$ then $\dim M$ is odd and $\rk L =0$. Hence $H_0 = B_k$, $K_0^- = B_{k+1}$ and $\bar K^+ = S^1,S^3$. If $l+=3$, then all groups are connected and $U_2 \subset N(H)$. If $u = 4$, this is not primitive, since $\bar K^+$ can be assumed to be on the diagonal of $U_2$. If $u=2$,  $K^+$ contains the center of $G_d$, which is not contained in $H$. Therefore $B_+$ is totally geodesic and $\dim G/H \le l_- + 2l_+$, by Lemma \ref{obstructions_lem_totgeod}. This is a contradiction, since $\dim G/H = 4k+4$. Let $l_+=1$ and assume $u =2$. In this case $K_0^-$ can only be extended by elements in $\U(k+1)$.  If $H$ commutes with the upper $2 \times 2$-block, then $N_0(H)/H = \U(2)$ acts with one dimensional singular isotropy groups, a contradiction to the Core-Weyl Lemma. Therefore we can assume $H$ does not commute with $U_{2}$. But then $N_0(H) = S(T^2 \cdot Z(\U(k))) \cdot B_k$, which contains $\bar K^+ = T^2$. But this is not primitive. Let $u = 4$. Note that both of $N(H)\cap K^\pm/H$ are at least one dimensional. Hence $\vert W \vert \le 4$ and we get from linear primitivity $8k +10 = \dim G/H \le 8k+8$, a contradiction. \\
	$\rk \bar H =2$: $u = 2,4$. If $\dim M$ is even, then $\rk L  = 1$. Since the action is essential $L = \SU(2)$ and $\bar K^+$ contains a diagonal $\SU(2)$ as a normal subgroup. Since $\rk \bar H = 2$, we have $l_+ = 2$. If $u=2$ this implies $K^+ = \Delta \SU(2) \cdot S^1 \cdot \SU(k-1)$, $H = T^2 \cdot \SU(k-1)$ and $K^- = S^1 \cdot \U(k+1)$, which is the diagram of $\bb C\bb P^{2k+1}$.\\
	Let $u = 4$. If $H$ contains an $S^3$ factor, it has to be contained in $\Sp(k+2)$ and diagonal in the upper $2 \times 2$ block by the Isotropy Lemma. But then it does not commute with another $S^3$, which is a contradiction. Therefore $H$ contains an $S^1$ factor, that acts trivially on $\bb S^{l_+}$ and hence is contained in $\Sp(k+2)$. Assume $S^1 = \diag(z^n,z^m,1, \ldots, 1)$, which acts with weights $2n$, $2m$ and $n\pm m$ on the upper $2 \times 2$ block and with weights $n$ and $m$ on the irreducible representations of $B_k$. By the Isotropy Lemma, we can assume $n = m = 1$. But the representation with weight $n + m$ is not equivalent to a representation in one of $K^\pm$ by conjugacy with an element of $N_0(H)$, contradicting the Isotropy Lemma.\\
	If $\dim M$ is odd, then $\rk L = 2$. Since $L = \SU(2) \times S^1$ is not essential $L = \Sp(2)$.    Because $\bar K^+$ and $\bar H$ have the same rank, $l_+$ must be even and since  $K^+$ projects onto $L$, $l_+ =4$. Therefore $K^+ = \Delta \Sp(2) \cdot \Sp(k)$, $H = \Delta \Sp(1)^2 \cdot \Sp(k)$ and $K^- = \Sp(1)^2 \cdot \Sp(k+1)$, which is the diagram of $\bb S^{8k +15}$.\\
	Now only the case $\rk H = 1$ is left. If $\dim M$ is even, then $u =2,4$, since otherwise $L =\SO(2)$, which is not possible for essential actions. Therefore we have $\rk L = 0$. Let $u = 2$. Since $K^- = \U(k)$ is a maximal subgroup of $\SU(k+2)$, $H$ is connected. By the Isotropy Lemma $\bar H = \diag(z^n,z^m,z^l, \ldots, z^l)$, with $m = \pm n$. If $m = n$, then  $N_0(H)/H = \SO(3)$ acts with trivial principal isotropy group, a contradiction to the Core-Weyl Lemma. Hence  $n = -m =1$. $l_+ = 3$ is not possible, since $\bar H$ is the maximal torus of the upper $2 \times 2$ block.  In the case $l_+ = 2$, we have $K^+ = \SU(2) \cdot \SU(k)$, $ H = S^1 \cdot \SU(k)$ and $K^- = \U(k+1)$, which is the diagram of $\bb H \bb P^{k+1}$.\\
	Let $u =4$. If $\bar H = \SU(2)$, then $K^- = \Sp(1)\cdot \Sp(k+1)$, which is maximal and hence all groups are connected. By the Isotropy Lemma $\bar H = \diag(a,a,1, \ldots,1)$. If $l _+ = 3$, the action is not primitive. If $l_+ =1$ the action is not linear primitve, since one of the three dimensional representations of in the upper $2 \times 2$ block is not contained in any of $K^\pm$. Hence we assume $\bar H = S^1 = \diag(z,z,1,\ldots,1)$ by the Isotropy Lemma. $l_+ = 1$ contradicts the Isotropy Lemma, since $\bar H$ has non trivial weights in the upper $2 \times 2$ block. Therefore $H$ is connected and $N_0(H)/H = \SO(3)$ acts with trivial principal isotropy group, a contradiction. \\
	Now let $\dim M$ be odd. Then $u=2,4$ and $\rk L =1$. Assume $\bar H=S^1$. If $u =4$ again, since $\bar H$ acts with non trivial weights on $U_{2}$, $l_+ =2,3$. Like before $N_0(H)/H =\U(2)$ acts with trivial principal isotropy group. Therefore $\bar K^+$ contains an $\SU(2)$ commuting with $\bar H$. This implies $l_+ = 3$ and $H \subset \Sp(k+2)$, contradicting the Isotropy Lemma.\\
	Now let $u =2$. If $L = \SO(2)$, $H_0,\,K_0^- \subset \SU(k+2)$, since the action is essential. Since $K_0^- = \U(k+1)$ is a maximal subgroup of $\SU(k+2)$, it can only be extended by elements in $L = \SO(2)$. Therefore either $N_0(H)/H = \SO(2) \times \SO(3)$ acts with trivial principal isotropy group, which is a contradiction to the Core-Weyl Lemma, or $N_0(H) = T^2\cdot B_k$, which is not primitive. If  $L = \SU(2)$ then $l_+ = 2,3$. In the first case $H = \Delta S^1 \cdot \SU(k)$, $K^- = S^1 \cdot \SU(k+1)$ and $K^+ = \Delta \SU(2)\cdot \SU(k)$, which is the diagram of $\bb S^{4k+7}$. Let $l_+ =3$. $H$ must project trivially to $L = \SU(2)$. Otherwise $N_0(H)/H = \U(2)$ acts with trivial principal isotropy group and $K^+$ must contain the $\SU(2)$ by the Core-Weyl Lemma, which is a contradiction. But then $K^+$ contains the central element $(-1,1)$ and by Lemma \ref{obstructions_lem_totgeod} we have $4k +6 = \dim G/H \le 2k+7$.\\
	Now assume $\bar H =\SU(2)$. Then $u = 4$ and $H_0 \subset \Sp(k+2)$. By the Isotropy Lemma, it must be diagonal in the upper $2 \times 2$ block. Since $\bar H$ does not commute with another $\SU(2)$ subgroup of $U_{2}$, $L = \SO(2)$ and $H$ acts trivially on $\bb S^{l_+}$, which contradicts the Isotropy Lemma.
\end{proof}

	\section{The even dimensional case}\label{seceven}

In this section we will start with the classification of cohomogeneity one manifolds with quasipositive sectional curvature in even dimensions. The case of positive sectional curvature was first done by Verdiani in \cite{v1} and \cite{v2}. However, our proof is oriented to the proof of Grove, Wilking and Ziller in \cite{gwz} Section 14. Namely, we have the following result.

\begin{thm}\label{even_thm_class}
	Let $M$ be a compact even dimensional cohomogeneity one manifold. Suppose $M$ admits an invariant metric with quasipositive sectional curvature. Then $M$ is equivariantly diffeomorphic to a rank one symmetric space with a linear action.
\end{thm}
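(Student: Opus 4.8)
The plan is to follow \cite{gwz}, section 14, systematically replacing the positive-curvature input by the quasipositive-curvature tools established above, and to treat the semisimple-but-not-simple case by hand.

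I would first reduce to essential actions: by Theorem \ref{intro_fp_hom}, if some subaction of $G$ on $M$ is fixed point homogeneous then $M$ is already equivariantly diffeomorphic to a rank one symmetric space with a linear action, and if a normal subgroup acts orbit-equivalently we may pass to it; so from now on assume the $G$-action is essential. By the Rank Lemma (Lemma \ref{obstructions_lem_rank}), since $\dim M$ is even, $H$ has corank one, and after relabelling $K^-$ has corank zero; hence $K^-$ contains a maximal torus of $G$ and in particular $Z(G)_0$. If $G$ were not semisimple, then $Z(G)_0$ is a nontrivial normal factor of $G$ contained in $K^-$, so by the observation preceding the definition of an essential action either $Z(G)_0$ acts fixed point homogeneously or the semisimple part of $G$ acts orbit-equivalently, both excluded; hence $G$ is semisimple. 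If $H$ were trivial then $\rk G \le 1$, so $G_0 \in \lbrace S^1, S^3 \rbrace$ and the Core-Weyl Lemma (Lemma \ref{lem_weyl_coreweyl}) would force the action to be fixed point homogeneous, again a contradiction; the case of finite nontrivial $H$ reduces to this as in the proof of the Rank Lemma. Thus $H_0$ is nontrivial, and by the Isotropy Lemma (Lemma \ref{obstructons_lem_isotropy}) the isotropy representation of $G/H_0$ is spherical, every irreducible summand of $\fr g/\fr h$ degenerating in one of the spheres $K^\pm/H = \bb S^{l_\pm}$.

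If $G$ is simple, the argument of \cite{gwz}, section 14, carries over essentially unchanged: Table \ref{appendix_table_spherical} together with the Isotropy Lemma confines $H_0$ to finitely many candidates, and whenever $G \in \lbrace \SO(d), \SU(d), \Sp(d) \rbrace$ and $H$ contains up to conjugacy a lower $k \times k$-block with $k \ge 3$ (resp. $k \ge 2$ if $G = \Sp(d)$) the Block Theorem (Theorem \ref{obstructions_thm_block}) already identifies $M$. In each of the remaining finitely many low-rank cases one lists the possible group diagrams, using Lemma \ref{obstructions_lem_totgeod} and the Partial Frankel Lemma (Lemma \ref{obstruction_lem_partial}) — neither of which requires positive curvature everywhere — where \cite{gwz} uses Frankel's theorem and the Connectedness Lemma, and checks that each surviving diagram is that of a linear cohomogeneity one action on $\bb S^{2n}$, $\bb C \bb P^n$, $\bb H \bb P^n$, $\bb R \bb P^n$, or the Cayley plane; exotic candidates allowed by Straume's classification are excluded by Theorem \ref{obsturctions_thm_homotopy} and its analogues for the other rank one symmetric spaces.

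The genuinely new case, and the one I expect to be the main obstacle, is $G$ semisimple but not simple, say $G = G_1 \cdots G_m$ with $m \ge 2$. Here one no longer has for free the positively-curved fact that some isotropy group has corank at most one, and this must be reconstructed from the Rank Lemma, the Lower Weyl Group Bound in the Primitivity Lemma (Lemma \ref{obstructions_lem_primitivity}), and the Upper Weyl Group Bound (Proposition \ref{prop_weylgroup_upperweyl}). Essentiality forbids any $K^\pm$ from containing a factor $G_i$, so each $G_i$ must be met nontrivially by both singular isotropy groups in a way compatible with $K^\pm/H$ being spheres; by the classification of homogeneous spheres (Table \ref{appendix_table_homsphere}) this confines the $G_i$ to a short list of classical and exceptional groups, and the Block Theorem eliminates the large-block subcases, leaving $m = 2$ and finitely many diagrams. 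Each is then analyzed directly, the Partial Frankel Lemma doing most of the work — exactly as in the proof of the Block Theorem above, where the diagrams of the $\SO(2)\SO(k+2)$-action on $\bb C \bb P^{k+1}$ and of its $\SU(2)$-enlargements on $\bb C \bb P^{2k+3}$ and $\bb H \bb P^{k+1}$ arise — and is shown to be either impossible or the linear diagram of a rank one symmetric space. Collecting the outcomes proves the theorem.
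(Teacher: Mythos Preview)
Your proposal is correct and follows essentially the same architecture as the paper: reduce to essential actions via Theorem \ref{intro_fp_hom}, use the Rank Lemma to force $G$ semisimple with $\cork H = 1$, then split into the simple case (which tracks \cite{gwz} \S14 with the Block Theorem and Lemma \ref{obstructions_lem_totgeod} substituted for the positive-curvature tools) and the non-simple case treated by hand. Two small corrections: the Rank Lemma is already established for quasipositive curvature in Section \ref{secweyl} and applies uniformly, so there is nothing to ``reconstruct'' in the non-simple case --- you invoked it yourself at the outset; and in the paper's non-simple argument (Lemma \ref{even_prop_semisimple}) the work is carried by linear primitivity, the lower Weyl group bound, and Lemma \ref{obstructions_lem_totgeod} rather than the Partial Frankel Lemma, with the unique essential outcome being the tensor product action of $\SU(2)\times\SU(k)$ on $\bb C\bb P^{2k-1}$.
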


Note, that by the Rank Lemma, one of the principal isotropy groups has maximal rank in $G$. This will be very useful, since there is a classification of subgroups of maximal rank in Lie groups. Furthermore the Weyl group has at most four elements, if one of $l_\pm$ is greater than one. The proof is split up into the two cases where $G$ is simple or not. In the simple case the proof in Section 14 of \cite{gwz} just requires a few adjustments. For the non simple case a new proof is required. In both cases the statements of the results carry over almost verbatim. For the entire section let $M$ be an even dimensional cohomogeneity one $G$-manifold with an invariant metric of quasipositive curvature and group diagram $H \subset \lbrace K^-,K^+\rbrace \subset G$.
\subsection{$G$ is not simple}

\begin{lem}\label{even_prop_semisimple}
	If $G$ is not simple and the action of $G$ on $M$ is essential, then $M$ is $G$-equivariantly diffeomorphic to $\bb C \bb P^{2k-1}$ with the tensor product action of \emph{$\SU(2)\times \SU(k)$}.
\end{lem}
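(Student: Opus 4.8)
The plan is to follow the treatment of the non-simple case in \cite{gwz}, Section 14, replacing at each step the positively curved tools by the quasipositive substitutes established above. First I would carry out the usual reductions: since $M$ is compact with quasipositive curvature its fundamental group is finite, and the covering arguments of Section \ref{seccohom1} let me assume $M$ simply connected and $G$ connected, so that $H$ and $K^\pm$ are connected when both $l_\pm\ge2$ and otherwise differ from their identity components at most by a cyclic group. Since $\dim M$ is even, the Rank Lemma (Lemma \ref{obstructions_lem_rank}) shows, after relabelling, that $K^+$ has full rank in $G$ and $H$ has corank one; the Upper Weyl Group Bound (Proposition \ref{prop_weylgroup_upperweyl}) then gives $\vert W\vert\le4$.

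Next I would exploit that the action is essential. Writing $\fr g=\fr g_1\oplus\dots\oplus\fr g_s\oplus\fr z$ as a sum of simple ideals and the centre, no nontrivial ideal can lie in $\fr k^-$ or $\fr k^+$, for such an ideal would either act fixed point homogeneously or have its complement act orbit equivalently, both contradicting essentiality; in particular $s\ge2$ and $\fr z=0$, so $G$ is semisimple with at least two simple factors. Since $\fr k^+$ has full rank, each $\fr g_i\cap\fr k^+$ is a proper subalgebra of $\fr g_i$ of maximal rank, hence appears in the Borel--de Siebenthal list. Applying the Reduction Lemma with $L=H$, the core $M_c^H$ carries a cohomogeneity one action of $N(H)_c/H$ with trivial principal isotropy and the same Weyl group, so the Core-Weyl Lemma (Lemma \ref{lem_weyl_coreweyl}) forces $(N(H)_c/H)_0$ into a short list of groups of rank at most two, which --- together with $H$ having corank one --- sharply limits what can sit between $H$ and $G$.

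Then I would run the structural bookkeeping. The Isotropy Lemma (Lemma \ref{obstructons_lem_isotropy}) forces every irreducible summand of the $G/H$-isotropy representation to degenerate in $K^-/H$ or $K^+/H$, so $K^\pm/H$ are homogeneous spheres from Table \ref{appendix_table_homsphere}, and the Block Theorem (Theorem \ref{obstructions_thm_block}) rules out a $k\times k$-block with $k\ge3$ (resp.\ $k\ge2$ in the symplectic case) inside a classical factor of $H$ unless $M$ is already a linear rank one symmetric space. Feeding this into linear primitivity (Lemma \ref{obstructions_lem_primitivity}), which for $\vert W\vert\le4$ yields $\dim G/H\le3\dim H+2(l_-+l_+)$, together with the Lower Weyl Group Bound $\vert W\vert\ge2\dim(G/H)/(l_-+l_+)$, I expect to be forced into $s=2$ with one simple factor of rank one; among $\Spin(3),\SU(2),\Sp(1)$ we may take it to be $\SU(2)$ acting on the first tensor slot, and the other factor an $\SU(k)$. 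An $\SO$- or $\Sp$-factor is excluded here: one of the singular orbits then carries a central Weyl group involution or is otherwise totally geodesic, and Lemma \ref{obstructions_lem_totgeod} together with the Partial Frankel Lemma (Lemma \ref{obstruction_lem_partial}) contradicts the dimension bound, or else the codimension two situation forces $M$ to be a homotopy sphere, handled by Theorem \ref{obsturctions_thm_homotopy}. This leaves $G$ covered by $\SU(2)\times\SU(k)$, with group diagram
\begin{align*}
	H=\Delta S^1\cdot S^1\cdot\SU(k-2),\qquad K^-=S^1\cdot\U(k-1),\qquad K^+=\Delta\SU(2)\cdot S^1\cdot\SU(k-2)
\end{align*}
for $k\ge3$ (and its evident degeneration for $k=2$), which is precisely the diagram of the tensor product action of $\SU(2)\times\SU(k)$ on $\bb C\bb P^{2k-1}=\bb P(\bb C^2\otimes\bb C^k)$; the equivalence of group diagrams then concludes the proof.

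The step I expect to be the genuine obstacle is the combinatorial elimination when $l_-$ and $l_+$ are both small, i.e.\ equal to $1$ or $2$, where $\vert W\vert$ may really be $4$ and the crude dimension estimates leave several competing candidate diagrams. In \cite{gwz} some of these are discarded at once by noting that there are zero sectional curvatures at a singular orbit; since that shortcut is unavailable under only quasipositive curvature, each such case must instead be ruled out by a careful analysis of $N(H)/H$, the exact weights of the torus part of $H$, and repeated use of the totally geodesic and Frankel-type obstructions, which is where the bulk of the work lies.
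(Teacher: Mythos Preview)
Your outline follows the right spirit but misses the paper's central structural step, and the case analysis you anticipate is not the one that actually occurs.

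The paper does \emph{not} reach ``one factor is $\SU(2)$'' via dimension bounds, Core--Weyl, or the Block Theorem. Instead it uses a projection argument: writing $G=L_1\times L_2$, the full-rank singular isotropy group (say $K^-$) splits as $K_1\times K_2$ with one factor, say $K_2$, acting transitively on $\bb S^{l_-}$; hence $w_-$ may be represented in $L_2$, and $H$ projects onto $K_1$. Projecting linear primitivity to $\fr l_1$ then forces $\proj_{L_1}(K^+)=L_1$. Essentiality makes $K^+\cap L_1$ finite, so $K^+$ has corank at least one; by the Rank Lemma it has corank exactly one, and $K^+_\Delta\cong L_1$. Since $K^+_\Delta$ commutes with $K^+_2$, which has corank one in $L_2$, the rank of $L_1$ is one, i.e.\ $L_1\cong\SU(2)$. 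The same reasoning forces $l_+=2$ directly. None of this comes out of the tools you list, and your sentence ``I expect to be forced into $s=2$ with one simple factor of rank one'' is precisely the gap: you have not said how.

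Once $L_1=\SU(2)$ and $l_+=2$ are known, the remaining work is a case analysis on the simple group $L_2$ and the pair $(K_2,H_2)$, organised by $\rk H_2$. This is not a ``small $(l_-,l_+)$'' problem; $l_-$ is typically $3$, $5$, $7$ or larger. The genuinely new step compared to \cite{gwz} occurs for $L_2=G_2$, where the positive-curvature argument (zero curvatures at a singular orbit) is replaced by showing $B_+$ is totally geodesic via equivariance of $\II$ and then invoking Lemma \ref{obstructions_lem_totgeod}. Finally, your stated bound $\dim G/H\le 3\dim H+2(l_-+l_+)$ is the naive count that ignores the common summand $\fr h$; the paper uses the sharper $\dim G/H\le 2(l_-+l_+)$ from the Lower Weyl Group Bound throughout.
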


\begin{proof}
	We can assume $G=L_1 \times L_2$ and both $L_i$ have positive dimension. Since $\rk G = \rk K$ there exist subgroups $K_i \subseteq L_i$, such that $K_0^- = K_1 \times K_2$.  $K^-$ cannot contain a normal subgroup of $G$, because the action is essential, which implies, that $G$ cannot have a center of positive dimension and is therefore semisimple. Furthermore let $H_0 = (H_1 \cdot H_\Delta \cdot H_2)_0$ and $K_0^+=(K_1^+ \cdot K_\Delta^+\cdot K_2^+)_0$. By the classification of homogeneous Spheres, we can assume, that one of $K_i$, say $K_2$ acts transitively on the normal sphere $\bb S^{l_-}$ and that $H$ projects onto $K_1$. Hence we can assume, that the Weyl group element $w_-$ is represented by an element in $K_2$, which implies $w_-\proj_{L_1}(K^+)w_- = \proj_{L_1}(K^+)$. Since $\proj_{L_1}(K^-) = \proj_{L_1}(H) \subseteq \proj_{L_1}(K^+)$ the projection of $K^+$ to $L_1$ must be onto and therefore $K^+_1$ is a finite group, since the action is essential. Hence $K^+$ cannot have full rank and has corank $1$ by the Rank Lemma. Moreover $K^+_\Delta \cong L_1$ and $\rk(L_2) - \rk(K_2^+) = \rk(L_1) + \rk(L_2) - (\rk(K_\Delta^+) + \rk(K_2^+)) = 1$. Since $K_\Delta^+$ commutes with $K_2^+$ and the projection of $K^+_\Delta$ to $L_2$ has finite kernel, $L_1$ must have rank $1$ and is therefore isomorphic to $\SU(2)$.  $K_1$ is of maximal rank and the action is essential, therefore, $K_1 \cong S^1$ and  $H_\Delta \cong S^1$. Furthermore $l_+$ must be even since $K^+$ has corank $1$. $K_\Delta^+$ must act non trivially on $\bb S^{l_+}$ and hence by the classification of homogenous spheres $l_+ = 2$. Obviously $K^+_2$ acts trivially on $\bb S^{l_+}$, which implies $K^+_\Delta/H_\Delta \cong \bb S^{l_+}$ and $K^+_2 = H_2$. Therefore by primitivity the sphere $\bb S^{l_-} = K_2/H_2$ must be almost effective. Since $S^1 = K_1$ acts nontrivially on $\bb S^{l_-}$, the only possibilities are $\bb S^{l_-} \cong \U(n+1) / \U(n)$ or $\bb S^{l_+} \cong \U(1) \times \Sp(n+1)/\Delta \U(1) \cdot \Sp(n)$. Therefore up to covering $(K_2,H_2)$ is one of $(\SU(n+1), \SU(n))$, $(\U(n+1), \U(n))$, $(\Sp(n+1),\Sp(n))$, $(\U(1)\times \Sp(n+1), \Delta \U(1) \cdot \Sp(n))$.\\
	Suppose $L_2$ is not simple. Since $K_2$ is of maximal rank in $L_2$, $K_2$ cannot be simple and hence $L_2$ contains a rank one factor isomorphic to $\SU(2)$. This implies $L_2 = \SU(2) \times L^\prime_2$ and $K_2 = S^1 \times K^\prime_2$. Furthermore let $H_2^\prime = H\cap L_2^\prime$.  Obviously $K^\prime_2$ acts transitively on $\bb S^{l_-}$. By the same argument as above $K^+$ must project onto the $\SU(2)$-factor of $L_2$. Since $H_2$ commutes with $K^+_\Delta$, this means $H_2 = H_2^\prime$, forcing $H$ to have at least corank $2$, a contradiction. Therefore $L_2$ is simple.\\
	To determine the group diagram of the possible actions of $G$ on $M$, we thus have to find quadruples $(L_2,K_2,\SU(2)\cdot H_2, H_2)$, where $L_2$ is simple, $K_2$, $\SU(2)\cdot H_2$ have maximal rank in $L_2$, $H_2$ has corank one and acts spherical, and $(K_2,H_2)$ is one of the pairs above up to covering. Spherical subgroups of simple groups are classified in Table \ref{appendix_table_spherical}.\\
	First assume, that $H$ only consists of normal factors of rank $1$. This implies $\rk H_2 \le 2$. So we first consider the case $\rk H_2 = 0$. In this case $L_2 = \SU(2)$, $K^- = S^1 \times S^1$, $K_0^+ = \Delta \SU(2)$ and $H_0 = \Delta S^1$. If $H$ is connected, then $w_-$ can be represented by a central involution, normalizing $K^+$. $w_+$ is given by a diagonal Element normalizing $H$ and hence also normalizing $K^-$. Thus $\fr g = \fr k^+ + \fr k^-$ by linear primititvity and therefore $5 = \dim G/H \le l_+ + l_- = 3$, a contradiction.  Hence $H$ is not connected and we receive the group diagram of $\bb C \bb P^3$, with the tensor product action of $\SU(2) \times \SU(2)$. \\
	Now assume $\rk H_2 = 1$. Then $\rk L_2 = 2$ and hence $L_2$ is one of $\SU(3)$, $\Sp(2)$ or $G_2$.  \\
	$L_2 = \SU(3)$: $H_2$ cannot be three dimensional, since then $K_2 = \SU(3)$, and the action is not essential. Hence $H_2 = S^1$. Since $H_2$ commutes with the $\SU(2)$ factor of $K^+$, we can assume that $K^+$ projects to the maximal $\U(2)$ subgroup of $\SU(3)$, where the $\SU(2)$-factor is the lower block. $l_- =1$ contradicts primitivity, since $H$ projects to the maximal torus of $L_2$. Hence $l_- = 3$.  and $K_2 = \U(2)$. But this is the diagram of $\bb C \bb P^5$ with the tensor product action of $\SU(2) \times \SU(3)$.\\
	$L_2=\Sp(2)$:
	If $H_2$ is three dimensional, then $L_2$ must either contain $\SU(3)$ or $\Sp(2)$, but $\Sp(2)$ does not contain $\SU(3)$ as a subgroup and the latter case is not essential. Hence $H_2 = S^1$ and $l_-\le 3$. 
	But then $11 = 13 -2 = \dim G/H \le 2(l_-+l_+) \le 10$ by linear primitivity, a contradiction. \\
	$L_2 = G_2$: If $H_2$ is one dimensional, then again $l_- \le 3$ and linear primitivity rules out this case. Hence $H_2 = \SU(2)$ and $K_2 = \SU(3)$, since no $\Sp(2)$ is contained in $G_2$. Also $K^+ = \Delta \SU(2) \cdot \SU(2)$ projects to $\SO(4) \subseteq G_2$. As in the proof of \cite{gwz} Proposition 14.2 the isotropy representation on the tangent space $T^+$ of the singular orbit $B_+$ decomposes into an eight dimensional and a three dimensional irreducible representation. The representation on $S^2 T^+$ splits into two trivial, two five dimensional and into representations on which $G_2 \cap K^+$ acts nontrivially. Hence $B_+ $ is totally geodesic. Lemma \ref{obstructions_lem_totgeod} implies $13=\dim G/H \le 2\cdot l_++l_- =9 $, a contradiction. \\
	Now assume, that $\rk H_2 = 2$. Then $\rk L_2 = 3$ and since $L_2$ is simple, it must be one of $\Spin(7)$, $\SU(4)$, $\Sp(3)$. 
	Because of primitivity $H_2$ cannot be a torus. Hence it contains at least one three dimensional normal subgroup. It cannot contain more than one three dimensional group, since it must be one of the spheres above. Hence $H_2 = S^1 \cdot \SU(2)$ and $\dim H = 5$ and $l_- \le 7$.\\
	$L_2 = \Spin(7), \Sp(3)$: By linear primitivity $19 = 24 -5 = \dim G/H \le 2(l_+ + l_-) \le 2 (7+2) = 18$. \\
	$L_2 = \SU(4)$: In this case $K_2 = \U(3)$ and $\SU(2)$ is a lower block in $L_2$. Only the upper $\SU(2)$ block commutes with this. Hence the $\SU(2)$ factor of $K^+$ projects to the upper block and $K^+ = \Delta \SU(2) \cdot \U(1) \cdot \SU(2)$, $K^- = S^1 \times \U(3)$, $H = \Delta S^1 \cdot \U(1) \cdot \SU(2)$, which is the diagram of the action of $\SU(2) \SU(4)$ on $\bb C \bb P^{5}$.  \\
	From now on we can assume, that a simple factor of $H_2$ has at least rank $2$.\\
	$(K_2, H_2) = (\SU(n+1),\SU(n))$: $n \ge 3$. Since the corank of $H_2$ in $L_2$ is one and the isotropy action $L_2/H_2$ has to be spherical, $L_2$ has to be one of $\SU(n+1)$ or $\Spin(k)$ for $k = 6,7,8,9$. The first case is not essential. If $L_2$ is one of $\Spin(6)$ or $\Spin(7)$, then $H_2=\SU(3)$ and $K_2 = \SU(4)$, but then $L_2 = \Spin(6)$ is not essential. If $L_2 = \Spin (7)$ then $\SU(2)\cdot \SU(3)$ is not subgroup of $L_2$. Similarly $\SU(5)$ is not contained in $\Spin(8)$ or $\Spin(9)$.\\
	$(K_2,H_2)=(\U(n+1),\U(n))$: $n \ge 3$. Since the simple factor of $H_2$ is spherical in $L_2$, the only possibilities for $L_2$ are $\SU(n+2)$ or $\Spin(k)$ for $k = 8,9,10,11$. The first case is handled by the Block Theorem. So let $L_2 = \Spin (k)$ for $k = 8,9,10,11$. If $k = 8,9$, then $n = 3$ and if $k = 10,11$, then $n = 4$. If $k = 8,9$, then by linear primitivity $21 \le \dim G/H \le 2 (l_-+l_+) = 2 \cdot 9 = 18$ and if $k = 10,11$, then $31 \le \dim G/H \le 22$. \\
	$(K_2,H_2) = (\Sp(n+1),\Sp(n))$: $n \ge 2$. Since $\Sp(n)$ must be spherical in $L_2$ with corank one, $L_2$ must be one of $\Sp (n+1)$ or $\SU(4)$. The first case is not essential and the second case does not contain $\Sp(3)$.\\
	$(K_2,H_2) = (\U(1) \times \Sp(n+1), \Delta \U(1) \cdot \Sp(n))$: $n \ge 2$: In this case $L_2$ must be one of $\Sp(n+2)$, $\SU(5)$, $F_4$. The first case is handled by the Block Theorem, $\SU(5)$ does  not contain $\U(1) \times \Sp(3)$. Hence  $L_2 = F_4$, but then by linear primitivity $43 = 55 -12 = \dim G/H \le 2(l_-+l_+) = 2(13+2) = 30$, a contradiction. 
\end{proof}

\subsection{$G$ simple}

 For the simple case we distinguish between the two cases, where  $H$ has a simple normal factor of rank at least two or only factors of rank one, and start with the latter case. 
 \begin{prop}
 	Let $G$ be simple. Assume, that all simple normal factors of $H$ have rank one. If the action is essential, then $(M,G)$ is one of the following: \emph{$(\bb C \bb P^6, G_2)$, $(\bb C \bb P^9, \SU(5))$, $(\bb H \bb P^2,\SU(3))$, $(\bb H \bb P^3, \SU(4))$ or $(\text{Ca}\bb P^2, \Sp(3))$} with one of the actions given in Table \ref{appendix_table_cohom_one_even}.
 \end{prop}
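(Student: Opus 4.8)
The plan is to use the Rank Lemma (Lemma~\ref{obstructions_lem_rank}) together with the Borel--de Siebenthal classification of maximal rank subgroups to reduce the problem to a finite list of candidate group diagrams, and then to eliminate --- or identify --- these one by one by means of the obstructions of Section~\ref{secobstructions} and the Core-Weyl Lemma. First, by the Rank Lemma we may assume that $K^-$ has full rank in $G$, so that $H$ has corank one. Away from the codimension two case --- in which $H$ is essentially trivial and the possible diagrams are listed in Lemma~\ref{lem_weylgroup_codim2} --- one has $|W|\le 4$ by Proposition~\ref{prop_weylgroup_upperweyl}, the hypothesis that $H/H_0$ is cyclic being automatic by the discussion of Section~\ref{seccohom1}. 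By assumption every simple normal factor of $H$ has rank one, so $H_0$ is locally isomorphic to $T^a\times\SU(2)^b$ with $a+b=\rk G-1$, whence $\dim H\le 3(\rk G-1)$. Moreover $K^\pm/H\cong\bb S^{l_\pm}$ is a homogeneous sphere (Table~\ref{appendix_table_homsphere}) whose isotropy group $H$ has only rank one simple factors, and inspecting that table gives $l_\pm\le 7$. Linear primitivity (Lemma~\ref{obstructions_lem_primitivity}) together with the lower Weyl group bound then forces $\dim G/H\le\tfrac{|W|}{2}(l_-+l_+)\le 28$, hence $\dim G\le 28+3(\rk G-1)$; since a simple group of rank $r$ has dimension at least $r(r+2)$, this bounds $\rk G$, so $G$ belongs to an explicit finite list of simple groups.

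Next, for each such $G$ I would run through the conjugacy classes of maximal rank subgroups furnished by Borel--de Siebenthal and take $K^-$ to be one of them. As $K^-/H$ is a homogeneous sphere, $H$ is then determined by $K^-$ up to finitely many choices, and discarding those for which $H$ acquires a simple factor of rank at least two collapses the list dramatically: for instance $G=\SU(n)$ survives only for $n\le 5$, $G=\Sp(n)$ only for $n\le 3$, and $\Spin(n)$ only for small $n$, while the exceptional groups are finite in number anyway and $F_4$ is already excluded by the dimension bound above. For each surviving triple $(G,K^-,H)$ the other singular isotropy group $K^+\supseteq H$ must likewise satisfy $K^+/H\cong\bb S^{l_+}$, and its conjugacy class inside $G$ is constrained by the Primitivity Lemma (Lemma~\ref{obstructions_lem_primitivity}(c)), which forces $K^-$ together with the $N(H)_0$-conjugates of $K^+$ to generate $G$, and by the Isotropy Lemma (Lemma~\ref{obstructons_lem_isotropy}), which forces every irreducible summand of the isotropy representation of $G/H$ to be weakly equivalent to a summand of $\bb S^{l_-}$ or $\bb S^{l_+}$.

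It then remains to eliminate all but the five asserted pairs. For this I would argue case by case, using: the dimension estimate $\dim G/H\le\tfrac{|W|}{2}(l_-+l_+)\le 2(l_-+l_+)$ from linear primitivity; Lemma~\ref{obstructions_lem_totgeod} and the Partial Frankel Lemma~\ref{obstruction_lem_partial} whenever one of the singular orbits is totally geodesic, which occurs in particular when a Weyl group involution $w_\pm$ is represented by a central element of $G$ or when $K^\pm$ contains the central involution of $G$; the Core-Weyl Lemma~\ref{lem_weyl_coreweyl} applied to the core group acting on the core $M_c^H$, which rules out every case in which this group, or one of its singular isotropy groups, fails to appear in the short Core-Weyl list; the Block Theorem (Theorem~\ref{obstructions_thm_block}) whenever $H$ contains a $k\times k$-block with $k\ge 3$, resp. $k\ge 2$ in the symplectic case; and the parity of $\dim M$, which is even and hence forces the appropriate one of $l_+,l_-$ to be even. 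What remains are precisely the group diagrams of the cohomogeneity one actions of $G_2$ on $\bb C\bb P^6$, of $\SU(5)$ on $\bb C\bb P^9$, of $\SU(3)$ on $\bb H\bb P^2$, of $\SU(4)$ on $\bb H\bb P^3$ and of $\Sp(3)$ on $\text{Ca}\bb P^2$; each of these arises as a cohomogeneity one subaction of the isometry group of the corresponding rank one symmetric space and is recorded in Table~\ref{appendix_table_cohom_one_even}. That the surviving diagrams really describe these symmetric spaces (and not exotic manifolds) then follows from Theorem~\ref{obsturctions_thm_homotopy} and the connectedness results of Section~\ref{seccohom1}.

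The main obstacle is the length and delicacy of the Borel--de Siebenthal elimination itself. One must keep careful track of the non-connectedness of $H$ and $K^\pm$ and of the ineffective kernels of the sphere bundles throughout, and --- most importantly --- one must in each surviving-rank case pin down the precise conjugacy class of the embedding $K^+\hookrightarrow G$, since several near-misses are separated from the genuine symmetric-space examples only by a careful application of the Isotropy Lemma, a Frankel-type argument via Lemma~\ref{obstruction_lem_partial}, or a contradiction to the Core-Weyl Lemma. By contrast, once the rank of $G$ has been bounded, the organisation of the argument is purely a finite (if lengthy) bookkeeping task, following \cite{gwz}, section 14, almost verbatim.
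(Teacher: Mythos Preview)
Your proposal is correct and follows essentially the same strategy as the paper's proof, which in turn follows \cite{gwz}, Proposition~14.3. The paper's own proof is in fact much terser than your outline: it simply invokes \cite{gwz} and only spells out the two subcases where the positive-curvature argument genuinely breaks down in quasipositive curvature --- namely $(G,H,K^-,K^+)=(\SU(3),S^1,\U(2),\SO(3))$ in rank two, handled via a fixed-point-set argument and the Connectedness Lemma, and the codimension-$8$ case $G=\Spin(7)$ with $K^-\supseteq\Sp(2)$, handled via Lemma~\ref{obstructions_lem_totgeod} applied to the totally geodesic orbit $B_-$. Your dimension bound $\dim G\le 28+3(\rk G-1)$ is slightly weaker than the paper's $\dim G-3\rk G\le 19$ (the paper exploits that $\vert W\vert\le 2$ when both $l_\pm$ are odd and $\ge 3$, via Proposition~\ref{prop_weylgroup_upperweyl}(b)), so your enumeration would include a few extra groups such as $\SU(6)$, but these are eliminated by the same tools and the difference is purely bookkeeping.
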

 \begin{proof}
 	We will follow the proof of Proposition 14.3 in \cite{gwz}, which is the analogue of this result for positive curvature and only consider the cases, where the proof has to be adjusted. Since all normal factors of $H$ are one and three dimensional $l_\pm = 1,2,3,4,5,7$ and at least one is odd. The Weyl group has order at most $4$, and the the order is $2$ if $l_\pm$ are both one of $3,5,7$. \\
 	The case $\rk G \le 2$ works like in \cite{gwz} except if $G=\SU(3)$, $H=S^1$, $K^- = \U(2)$ and $K^+ = \SO(3)$. Let $j$ be the involution in $H$, then $\tilde G = C(j) = \U(2)$ acts by cohomogeneity one on $M^j$ with isotropy groups $\tilde K^+ = K^+\cap C(j) = \O(2)$, $\tilde K^- = K^- \cap C(j) = T^2$ and $\tilde H = H$.  $\tilde H$ has a non trivial $2$-dimensional representation in $\U(2)$ and acts trivially on $T^2/H = S^1$. Hence $\tilde B_- = \U(2) / T^2$ is totally geodesic. By Lemma \ref{obstructions_lem_totgeod}, we have $3 = \dim \tilde G /\tilde H \le 2 \cdot \tilde l_- +\tilde l_+ =2$.\\
 	We now proceed as in \cite{gwz}. First remember some arguments therein: Let $\rk G \ge 3$. $\dim H \le 3 \rk H = 3(\rk G -1)$ and therefore $\dim G - 3\rk G \le \dim G/H -3$. The lower Weyl group bound implies $\dim G/H \le 2(7+4) = 22$ and hence $\dim G - 3 \rk G \le 19$.\\
 	Suppose there is an orbit of codimension $8$. Then $K^-$ contains $\Sp(2)$ as a normal factor and hence $G$ must be one of $\Spin(7)$ or $\Sp(3)$. The latter case works in the same way as in \cite{gwz}. Hence let $G = \Spin(7)$. Note that $H = S^1 \cdot \Sp(1)$, since $\Spin(7)$ does not contain $\Sp(1) \cdot \Sp(2)$. The central involution $j$ of $\Spin(7)$ is contained in $\Spin(5) = \Sp(2) \subseteq K^-$. Since $l_-=7$, it cannot be contained in $H$ and hence $B_-$ is totally geodesic. By Lemma \ref{obstructions_lem_totgeod}, we get $17 = \dim G/H \le 14 + l_+$. Hence $l_+ \ge 3$. If $l_+$ is odd, then $\dim G/H \le 2 \cdot 7 = 14$, a contradiction. Hence we assume $l_+ = 4$, but then $\bb S^{l_+} = \SO(5)/\SO(4)$, which is not possible since $ H = S^1 \cdot \Sp(1)$.	The other cases work in the same way as in \cite{gwz}.
 \end{proof}
 We end the evendimensional classification with the case, where $H$ contains a simple normal subgroup of rank at least $2$:
 \begin{prop}
 	Let $G$ be simple, such that $H$ a simple normal subgroup of rank at least two. If the action is essential, then $(M,G)$ is one of the following pairs: \emph{$(\bb C\bb P ^{n-1}, \SO(n))$, $(\bb H \bb P^n, \SU(n))$, $(\bb C \bb P ^{15},\Spin(10))$, $(\bb S^{14}, \Spin(7))$ or $(\bb C \bb P^7, \Spin(7))$} with one of the actions given in Table \ref{appendix_table_cohom_one_even}.
 \end{prop}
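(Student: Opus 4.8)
The plan is to follow the structure of the proof of the corresponding result in positive curvature (\cite{gwz}, Section 14), replacing the two inputs that there genuinely use positivity everywhere, namely the zero-curvature obstruction at a singular orbit and Wilking's original Chain Theorem. Write $H_1$ for the simple normal factor of $H$ of rank at least two; by Lemma \ref{obstructions_lem_simple} it is unique. Since $M$ is even dimensional, the Rank Lemma (Lemma \ref{obstructions_lem_rank}) gives $\cork H = 1$, so one of $K^\pm$, say $K^-$, has maximal rank in $G$. By part (b) of the Isotropy Lemma (Lemma \ref{obstructons_lem_isotropy}) the isotropy representation of $G/H_0$ is spherical, hence $H_1$ acts transitively on the unit sphere of each of its nontrivial irreducible summands; in particular $H_1$ is a spherical simple subgroup of $G$, so that $(G,H_1)$ is one of the finitely many pairs listed in Table \ref{appendix_table_spherical}. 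This is the starting point of the case analysis.

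The first reduction clears away the classical families. Whenever $G \in \{\SO(d),\SU(d),\Sp(d)\}$ and $H$ contains, up to conjugacy, a lower $k\times k$ block with $k \ge 3$ (respectively $k \ge 2$ if $G = \Sp(d)$), which is exactly what happens when $H_1$ is the corresponding classical group realized as a block, the Block Theorem (Theorem \ref{obstructions_thm_block}, with the factor $L$ trivial) forces $M$ to be a rank one symmetric space with a linear action; inspecting the diagrams subject to $\cork H = 1$ then produces precisely $(\bb C\bb P^{n-1},\SO(n))$ and $(\bb H\bb P^n,\SU(n))$. This disposes of all but finitely many of the remaining pairs $(G,H_1)$.

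For the finitely many residual pairs, with $G$ exceptional, or $G = \Spin(k)$ for small $k$, or $G = \Spin(10)$ or $\Spin(7)$, I would argue as follows. Since $\rk K^- = \rk G$ and $K^- \supseteq H_1$ up to conjugacy, $K^-$ lies in the short, classified list of maximal-rank subgroups of $G$ containing $H_1$; the requirement $K^-/H \cong \bb S^{l_-}$, together with $K^+/H \cong \bb S^{l_+}$ and the classification of homogeneous spheres (Table \ref{appendix_table_homsphere}), then leaves only a handful of candidate group triples $H \subset \{K^-,K^+\} \subset G$. For each candidate, linear primitivity together with the finiteness of the Weyl group $W$ and the Weyl-group bounds (Lemma \ref{obstructions_lem_primitivity}(b) and Proposition \ref{prop_weylgroup_upperweyl}) either yields a numerical contradiction of the shape $\dim G/H > 2(l_- + l_+)$ when $|W| \le 4$, or leaves a genuine linear cohomogeneity one action, which one then recognizes in Table \ref{appendix_table_cohom_one_even} as one of $(\bb C\bb P^{15},\Spin(10))$, $(\bb S^{14},\Spin(7))$ or $(\bb C\bb P^7,\Spin(7))$. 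Subactions that would be fixed point homogeneous are excluded throughout by Theorem \ref{intro_fp_hom} together with the essentiality hypothesis.

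The delicate point, and the one where our setting differs from \cite{gwz}, is the subset of candidates that are closed there by producing zero curvatures at a singular orbit, an argument no longer available in quasipositive curvature. In those cases I would substitute one of the following: (i) the observation that if some Weyl involution $w_\pm$ is represented by a central element of $G$, or if the relevant factor of an isotropy group meets a normal factor of $G$ in a finite group, then the corresponding orbit $B_\pm$ is totally geodesic, so Lemma \ref{obstructions_lem_totgeod} gives the sharper bound $\dim G/H \le 2 l_- + l_+$; (ii) the Partial Frankel Lemma (Lemma \ref{obstruction_lem_partial}) applied to the two singular orbits, whose second fundamental forms are forced to vanish on large subspaces by equivariance; or (iii) the Connectedness Lemma applied to a totally geodesic singular orbit of small codimension. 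The main obstacle is therefore the bookkeeping: one must check, configuration by configuration, that every case handled in \cite{gwz} by zero curvatures can be handled by one of these substitutes, and that the cases which survive are precisely the five linear actions listed. No single step is deep, but verifying that a usable totally geodesic orbit (or a Frankel-type degeneracy) is always present when the bare Weyl-group count fails is the crux of the argument.
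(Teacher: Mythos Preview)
Your proposal is correct and follows essentially the same approach as the paper: the paper likewise defers to the argument of \cite{gwz}, Proposition 14.4, and identifies the cases needing new treatment as those with $G = \Spin(k)$ and $H' = \SU(3)$, where the zero-curvature obstruction is replaced by the totally geodesic orbit bound of Lemma \ref{obstructions_lem_totgeod} (your substitute (i)). The paper is simply more specific than your sketch in pinning down that the only genuinely new case is $G = \Spin(7)$ with $K_0^- = G_2$, $H_0 = \SU(3)$, $K^+ = S^1 \cdot \SU(3)$, where the central involution of $\Spin(7)$ lying in $K^+ \setminus H$ makes $B_+$ totally geodesic and forces the $\bb C\bb P^7$ diagram.
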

 \begin{proof}
 	We follow the proof of Proposition 14.4 of \cite{gwz}. Let $H^\prime$ denote the simple normal factor of $H$ with rank at least $2$. By Lemma \ref{obstructions_lem_simple} there is only one such subgroup. If $G$ is one of $\Sp(n)$ or $\SU(n)$, then the cases are handled as in \cite{gwz}. If $G = \Spin(k)$ then the only case, that does not work as in \cite{gwz} is $H^\prime = \SU(3)$. Suppose $k \ge 8$. Then $\rk H \ge 3$ and thus not all $6$ dimensional representations can degenerate in $G_2$. Hence we can assume that $K^-$ contains an $\SU(4)$ factor and $l_- = 7$. Since all other simple normal factors of $H$ are at most one dimensional, we have: $\dim H \le 3(\rk H -2) + 8 = 3(\rk G -3) +8 = 3 \rk G -1$. If $l_+$ is odd and at least $3$, the lower Weyl group bound implies $\dim G - 3\rk G \le 13$, which is not possible, since $k \ge 8$. If $l_+$ is even or $1$, then $\dim G - 3 \rk G \le 25$, which is only true for $k \le 9$. If $k = 9$, then $\rk H = 3$. Since $l_+$ is either even or $1$, $H = S^1 \cdot \SU(3)$. Hence the lower Weyl group bound implies $27 = \dim G/H \le 2(l_-+l_+)\le 2 (7+6) = 26$. Therefore $k = 8$. As above $H = S^1 \cdot \SU(3)$. The lower Weyl group bound gives: $19=28-9 = \dim G/H \le 14 + 2 l_+$. Therefore $l_+ \ge 4$, but $l_+ = 4$ is not possible, since then $\rk H \ge 4$. Therefore $l_+ = 6$ and $\bb S^{l_+} = G_2 / \SU(3)$. Then $K^+ = S^1 \cdot G_2$, but $\Spin(8)$ does not contain this group.  The case $k = 6$ is handled as in \cite{gwz} and so is $k = 7$, except if there is no isotropy group isomorphic to $\SU(4)$. Then $K_0^- = G_2$, $H_0 = \SU(3)$ and $K^+ = S^1 \cdot \SU(3)$. If the center of $\Spin(7)$ is not contained in $H$, then the singular orbit $B^+ = \Spin(7) / K^+ $ is totally geodesic in $M$. By Lemma \ref{obstructions_lem_totgeod}, we have $13 = \dim G/H \le 2 l_+ + l_- = 2+6 = 8$, a contradiction. Hence we have recovered the linear action of $\Spin(7)$ on $\bb C \bb P^7$. The exceptional cases can be ruled out as in \cite{gwz}. This finishes the proof.
 \end{proof} 

	\section{The odd dimensional case}\label{secodd}

The classification in odd dimensions splits up into several subcases. For the entire section, let $M$ be an odd dimensional quasipositively curved cohomogeneity one $G$-manifold with group diagram $H \subset \lbrace K^-,K^+\rbrace \subset G$, such that the action is essential.\\

 Suppose that $H$ has full rank in $G$ and $G = L_1 \times L_2$ is not simple. Since $H$ has full rank $l_\pm$ are both even, $H = H_1 \times H_2$ and $K^\pm = K^\pm_1 \times K^\pm_2$. Therefore $G$ is semisimple. By the classification of homogeneous spheres for both $K^\pm$ at least one $K_i^\pm$ acts trivially. But then by primitivity at least one factor of $G$ must be contained in $K^\pm$ and the action is not essential. Hence $G$ is simple. Now the equal rank cases can be handled as in \cite{gwz} Section 5. From now on we can assume, that $H$ has corank two.

\subsection{$G$ is not semisimple}

\begin{thm}
	Suppose $G$ is not semisimple. Then either $G=S^1\cdot L$ with \emph{$L \in \lbrace \SO(n),$ $\Spin(7),$ $G_2\rbrace$} and the action is the tensor product action on $\bb S^{2n-1}$, $\bb S^{15}$ or $\bb S^{13}$, respectively, or otherwise \emph{$G = \U(2)\cdot \SU(2)$} with the tensor product action on $\bb S^7$. 
\end{thm}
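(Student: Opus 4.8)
The plan is to determine the structure of $G$ first, then pin down the group diagram, and finally recognize the manifold as a sphere with a tensor product action.

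\textbf{Step 1: the central torus is one–dimensional.} Write $G = T^j \cdot G_{ss}$, where $T^j = Z(G)_0$ and $j \ge 1$ since $G$ is not semisimple. The orthogonal projection $\proj_{\fr z} \colon \fr g \to \fr z(\fr g)$ restricted to $\fr k^\pm$ is a Lie algebra homomorphism into the abelian algebra $\fr z(\fr g)$ (its kernel contains $[\fr k^\pm,\fr k^\pm]$), so $\proj_{\fr z}(\fr k^\pm)$ is the image of $\fr z(\fr k^\pm)$ and has dimension at most $\dim Z(K^\pm)_0$. Restricting linear primitivity (Lemma \ref{obstructions_lem_primitivity}(a)) to the center gives $\fr z(\fr g) = \proj_{\fr z}(\fr k^-) + \proj_{\fr z}(\fr k^+)$. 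Since $K^\pm/H$ is a sphere, the classification of homogeneous spheres (Table \ref{appendix_table_homsphere}) shows $\dim Z(K^\pm)_0 \le 1$ whenever $l_\pm \ge 2$, and together with the common contribution of $\proj_{\fr z}(\fr h)$ to both summands this forces $j \le 2$; the case $j = 2$ is excluded exactly as the corresponding case in \cite{gwz}, using in addition the Core–Weyl Lemma (Lemma \ref{lem_weyl_coreweyl}) applied to a maximal torus of $H$ and essentiality. Hence $G = S^1 \cdot L$ with $L$ semisimple.

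\textbf{Step 2: the isotropy groups.} By the Rank Lemma (Lemma \ref{obstructions_lem_rank}) and the reduction preceding the theorem, $H$ has corank two in $G$, and one of $K^\pm$, say $K^-$, has corank at most one. If $K^-$ had full rank it would contain the central $S^1$ as a normal subgroup, which by the discussion preceding the definition of essential actions would make the central circle act fixed point homogeneously or orbit equivalently — impossible for an essential action. So $\cork K^- = 1$, $\rk K^- = \rk L$, and $K^- \cap L$ is a corank–one subgroup of $L$. I would now split into $L$ simple and $L$ not simple. If $L$ is simple, $K^- \cap L$ runs over the corank–one subgroups of simple Lie groups; combining this list with the classification of homogeneous spheres, the Isotropy Lemma (Lemma \ref{obstructons_lem_isotropy}), the lower and upper Weyl group bounds (Lemma \ref{obstructions_lem_primitivity}(b), Proposition \ref{prop_weylgroup_upperweyl}), the Partial Frankel Lemma (Lemma \ref{obstruction_lem_partial}) and Lemma \ref{obstructions_lem_totgeod} whenever a singular orbit is totally geodesic, all possibilities are eliminated except $L \in \{\SO(n), \Spin(7), G_2\}$, realized by the tensor product actions of $S^1 \cdot \SO(n)$, $S^1 \cdot \Spin(7)$, $S^1 \cdot G_2$ on $\bb S^{2n-1} \subset \bb R^n \otimes \bb C$, $\bb S^{15} \subset \bb R^8 \otimes \bb C$, $\bb S^{13} \subset \bb R^7 \otimes \bb C$. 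If $L$ is not simple, I would mirror the proof of Lemma \ref{even_prop_semisimple}: one simple factor of $L$ acts transitively on a normal sphere while $H$ projects onto the remaining factors, and the rank constraints together with essentiality force $L = \SU(2)\times\SU(2)$, $G = \U(2)\cdot\SU(2)$, with the tensor product action on $\bb S^7 \subset \bb C^2 \otimes \bb C^2$.

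\textbf{Step 3: recognition.} In each surviving case the group diagram is determined up to the equivalence of group diagrams, so it suffices to check that it coincides with that of the stated tensor product action; equivalently, one sees directly that a singular orbit is totally geodesic of codimension two (or uses Lemma \ref{cohom1_lem_connected} to compute the connectivity of $B_\pm \hookrightarrow M$ and that $M$ is simply connected), concludes that $M$ is a homotopy sphere, and then applies Theorem \ref{obsturctions_thm_homotopy} to identify $M$ with $\bb S^n$ carrying a linear action, which by Straume's classification is precisely the tensor product action attached to the forced diagram.

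The main obstacle is the case analysis in Step 2 for $L$ simple: there are many candidate pairs $(L, K^- \cap L)$, and for each one must either exhibit the tensor product action or derive a contradiction. The decisive leverage is linear primitivity combined with the sharp Weyl group bounds — whenever a Weyl group element normalizes a singular isotropy algebra (which happens as soon as the other singular sphere is acted on transitively by a factor not meeting that algebra) one obtains $\dim G/H \le \tfrac{|W|}{2}(l_-+l_+)$, and this numerical inequality is violated in all but the listed cases; the remaining subtle subcases, where $B_-$ is totally geodesic but no such Weyl element is available, are dispatched by Lemma \ref{obstructions_lem_totgeod} and the Partial Frankel Lemma.
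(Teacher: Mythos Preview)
Your overall plan (pin down $G$, then the diagram, then recognize) is sound, but Step 2 misses the organizing idea that makes the paper's proof work, and without it your case analysis is not actually a proof.

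The paper's proof hinges on a dichotomy you never invoke: either the central $S^1$ acts \emph{freely} on $M$, in which case one can quote Proposition 6.1 of \cite{gwz} verbatim (the quotient $M/S^1$ is an even-dimensional quasipositively curved cohomogeneity one $L$-manifold, already classified), or it does not, in which case some non-principal isotropy group, say $K^-$, contains a non-trivial central element of $G$ not in $H$. That element is an isometry whose fixed point set contains $B_-$ as a component, so $B_-$ is totally geodesic and Lemma \ref{obstructions_lem_totgeod} yields the sharp bound $\dim G/H \le 2l_- + l_+$. Combined with a short argument forcing $l_+ = 1$ and $K^-_0 \subset L$ in the remaining situation, this bound is what makes the subsequent run through the list of homogeneous spheres $K^-_0/H_0$ tractable. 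Your sketch mentions Lemma \ref{obstructions_lem_totgeod} ``whenever a singular orbit is totally geodesic'' but never explains how to produce such an orbit; the central-circle mechanism is precisely what supplies it.

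Two further specific problems: first, your claim that ``$K^- \cap L$ is a corank-one subgroup of $L$'' does not follow from $\rk K^- = \rk L$; the identity component $K^-_0$ may well sit diagonally in $S^1 \times L$, and in fact in the actual examples (e.g.\ $K^+ = \Delta \SO(2)\cdot\SO(k)$ for the $\SO(2)\SO(k+2)$ action on $\bb S^{2k+3}$) it does. Second, Step 1 is too hand-wavy: the reduction to $j=1$ is simply assumed in the paper (it is immediate once you observe that a second central circle would lie in the ineffective kernel or violate essentiality), and your sketch of a linear-primitivity argument for $j\le 2$ plus an unexplained appeal to \cite{gwz} for ruling out $j=2$ is not a replacement. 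The essential missing ingredient, though, is the free/non-free dichotomy; once you add it, the case analysis collapses to the manageable list the paper actually carries out.
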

\begin{proof}
	We can assume $G=S^1 \times L$. $H \cap S^1$ is in the kernel of the action and can  be assumed to be trivial. $H$ does not project onto $S^1$, because the action is essential. If $S^1$ acts freely, then the same proof as in \cite{gwz} Proposition 6.1 applies. Hence we assume that $S^1$ does not act freely and therefore $K^- \cap S^1 \neq \lbrace e \rbrace$. This implies that $B_- = G/K^-$ is totally geodesic. If $G/K^-$ has finite fundamental group then $K^-$ must project to $S^1$. Since $H$ does not project to $S^1$, this implies $l_- = 1$ and hence $G/K^-$ is totally geodesic of codimension $2$ and therefore $M$ is a homology sphere by the Connectedness Lemma. If $l_+$ is at least $2$, then the inclusion of $B_-$ in $M$ is $2$-connected and hence $B_-$ is simply connected. Therefore we only have to consider the case: $l_+ =1$, $K^+ \cap S^1 = e$. Furthermore we can assume, that $K^-$ does not project onto $S^1$ and hence $H_0, K^-_0 \subset L$. $H_0$ is a normal factor of $K^+ = \Delta S^1 \cdot H_0$. By Lemma \ref{obstructions_lem_totgeod}, we have $\dim G/H \le 2\cdot l_- + 1$.
	Since $\bb S^{l_-}$ is almost effective, the strategy is to compute the possible group diagrams by going through the possible cases for $K_0^-/H_0$:\\
	First assume, that $l_-$ is even. Then $K_0/H_0$ = $\SO(2n+1)/\SO(2n)$, $G_2/\SU(3)$. Consider $\bb S^{l_-} = G_2/\SU(3)$. Since $\rk H \ge 2$ and $H$ has corank $1$ in $L$, there is a simple factor $L^\prime$ in $L$ containing $H_0 = \SU(3)$. Since $L^\prime/H_0$ is spherical $(L^\prime,H_0)= (\Spin(6), \SU(3))$, $ (\Spin(7),\SU(3))$, $ (G_2, \SU(3))$. But $\Spin(6)$ does not contain $G_2$ and $L^\prime = G_2$ will not be essential. Hence $L = L^\prime = \Spin (7)$. But then $14 = \dim G/H \le 2 \cdot 6 + 1 = 13$, a contradiction. \\
	Now assume $K^-_0 / H_0 = \SO(2n+1)/\SO(2n)$. First assume $n \ge 3$. Since $H_0$ is simple of rank at least $3$, $L$ contains a simple factor $L^\prime$ containing $H_0$. Hence $(L^\prime, H_0) = (\Spin(k), \Spin(2n))$, where the Block Theorem applies, or $(L^\prime, H_0)$ is one of  $(F_4, \Spin(6))$, $(F_4,\Spin(8))$. But then either $37 \le \dim G/H \le 14$ or $24 \le \dim G/H \le 18$, which are both contradictions. Hence let $n = 1$. Then $\rk L = 2$ and $\dim H =1$. Hence $\dim L \le 2 l_- +1 = 5$. But then $L = S^1 \times  S^3$ and this is not essential. Let $n = 2$. Then $l_- = 4$, $\rk L = 3$ and  $\dim H = 6$. Hence $\dim L \le 8 + 6 = 14$ and the only possible choice for $L$ is $X \times \Sp(2)$, where $X$ is a compact rank one Lie group. But this is not essential.\\
	We can now assume that $l_-$ is odd and hence $K^-$ has full rank in $L$.  So if $L$ is not simple, then $K^-$ is one of the non simple groups acting transitively on a sphere and $L = X \times L^\prime$, where $X$ is a rank one group. Since the action is essential $X = \SU(2)$ and $K^-$ contains the maximal torus of $X$. We can assume, that $w_-$ is represented by an element of $L^\prime$. By linear primitivity $K^+$ projects to $X$. But then $K^+$ has corank 2 or contains $X$, a contradiction. From now on we can assume, that $L$ is simple. \\
	$K_0^-/H_0 = \SO(2n+2)/\SO(2n+1)$: Let $n \ge 2$. The options for $(L,H_0)$ are $(\SO(k), \SO(2n+1))$,  $(\SO(8), \Spin(7))$, $(\SO(9), \Spin(7))$, $(F_4, \Spin(7))$. The first case is handled by the Block Theorem. In the second case the action is not essential. The third and fourth case are ruled out by the dimensional bound on $L$: $\dim L \le 14 +21 = 35$.
	If $n =1$, then $\rk L = 2$ and $\dim H = 3$. Hence $\dim L \le 6 + 3 = 9$. Therefore $L = \SU(3)$, which contains no $\SO(4)$.\\
	$K^-_0/H_0 = \SU(n+1)/\SU(n)$: Assume $n \ge 3$. Then $(L,H_0)$ is one of $(\SU(n+1), \SU(n))$, $(\Spin(6), \SU(3))$, $(\Spin (7),\SU(3))$, $(\Spin(8),\SU(4))$, $(\Spin (9), \SU(4))$. The first two cases are not essential and the last two cases do not contain $\SU(5)$ as a subgroup. Hence the case $(L,H_0) = (\Spin(7), \SU(3))$ is left. Then $K^-_0 = \Spin(6)$ and $K^+ = \Delta \SO(2) \cdot \SU(3)$. But the only $S^1$ commuting with $\SU(3)$ is contained in $\Spin(6)$ and hence the action is not primitive.
	If $n = 2$, then $\rk L = 2$, $l_- = 5$, $\dim H = 3$. Therefore $\dim L \le 10+3 = 13$ and $L = \SU(3)$, $\Sp(2)$.  But the first case is not essential and the second case does not contain $\SU(3)$.
	$n = 1$ implies $\dim H = 0$ and $l_- = 3$ with $\rk L = 1$. Hence $L = \SU(2)$, which is not essential. \\
	$K_0^-/H_0 = \U(n+1)/\U(n)$: Assume $n \ge 3$. $H_0 = S^1 \cdot H^\prime = S^1 \cdot \SU(n)$. Then $(L,H^\prime) = (\SU(n+2), \SU(n))$, $(\Spin(k),\SU(l))$, with $(k,l) = (8,3),$ $(9,3),$ $(10,4),$ or $(11,4)$, $(F_4,\SU(3))$. In the first case the Block Theorem applies. If $H^\prime = \SU(3)$, then $l_- = 7$, $\dim H = 9$ and thus $\dim L \le 14 + 9 = 23$ and all groups have greater dimension. Similarly for $H^\prime = \SU(4)$ we have $l_- = 9$ and $\dim H = 16$. Then $\dim L \le 18 +16 = 34$ and all groups have greater dimension. Now let $n = 2$. Then $\dim H = 4$, $\rk L = 3$ and $l_- = 5$. Hence $\dim L \le 14$. But there is no simple Lie group with this dimensional bound of rank $3$. Assume $n = 1$. Then $\dim H = 1$, $l_- = 3$, $\rk L =2$ and $\dim L \le 7$. But there is no simple Lie group of rank $2$ with this dimensional bound. \\
	$K^-_0/H_0 = \Sp(n+1)/\Sp(n)$: Assume $n \ge 2$. Then $(L,H_0)$ is one of $(\Sp(n+1),\Sp(n))$, $(\SU(4), \Sp (2))$, $(\Spin (7), \Sp(2))$. In all cases the Block Theorem applies since the last two cases are effectively $(\SO(6),\SO(5))$ and $(\SO(7),\SO(5))$. Assume $n = 1$. Then $\dim H = 3$, $\rk L = 2$ and $l_- =7$. The only rank two group containing $\Sp(2)$ as a subgroup is $\Sp(2)$ and hence the action is not essential.\\
	$K^-_0/H_0 = X \times \Sp(n+1)/\Delta X \cdot \Sp(n)$ with $X \in \lbrace S^1, S^3 \rbrace$:  Assume $n \ge 2$. Then $(L,H^\prime)$ is one of $(\Sp(n+2), \Sp(n))$, $(\SU(5), \Sp(2))$, $(\Spin(8), \Sp(2))$ or $(\Spin(9), \Sp(2))$. The last three cases cannot contain $X \times \Sp(3)$ as subgroups. In the first case the Block Theorem applies. If $n = 1$, then $\rk L = 3$, the dimension of $H$ is $4$ or $6$ and $l_- = 7$. Hence $\dim L \le 14+6 = 20$. But there is no simple Lie group of rank $3$ with this property, except $\SU(4)$, which does not contain $\Sp(3)$.\\
	$K^-_0/H_0 = \Spin(9)/\Spin(7)$: $\rk L  = 4$, $l_- = 15$ and $\dim H = 21$. Hence $\dim L \le 51$. Therefore $(L,H_0) = (\Spin(9), \Spin(7))$, $(\Spin(8), \Spin(7))$. The first case is not essential and the second does not contain $\Spin(9)$. \\
	$K^-_0/H_0 = \Spin(7)/G_2$: $l_- = 7$, $\rk L = 3$ and $\dim H = 14$. Hence $\dim L \le 28$. $(L,H_0) = (\Spin (7), G_2)$, which is not essential.
\end{proof}

\subsection{$G$ semisimple of rank two}

	From now on let $G$ have rank two. We will only consider the non simple case $G = S^3 \times S^3$, since in the simple case the proof of \cite{gwz} Proposition 7.3  applies.  For an imaginary unit quanternion $x$ we denote the group $C^x_{p,q} = \lbrace (e^{x \cdot p \cdot \theta}, e^{x \cdot q \cdot \theta}) \vert \theta \in \bb R \rbrace\cong S^1$.
\begin{prop} \label{odd_prop_s3s3}
	Let	$G=S^3 \times S^3$. If the action is essential, then $M$ is one of the Eschenburg spaces $E_p$, $p \ge 0$, $P_k$, $k \ge 1$, the Berger Space $B^7$ or $Q_k$, $k \ge 1$, with one of the actions described in Table \ref{appendix_table_odd_cohom_one}. 
\end{prop}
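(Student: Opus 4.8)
The plan is to run the $S^3\times S^3$ part of the classification in \cite{gwz} essentially line by line, replacing each appeal to positive curvature by one of the quasipositive substitutes assembled in Sections~\ref{secobstructions}, \ref{secweyl} and \ref{secessential}: linear primitivity together with the lower and upper Weyl group bounds, Lemma~\ref{obstructions_lem_totgeod} for totally geodesic singular orbits, and the Partial Frankel Lemma~\ref{obstruction_lem_partial}.

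First I would fix the numerics. Since $G=S^3\times S^3$ has rank two and, by the reduction at the beginning of Section~\ref{secodd}, $H$ has corank two, the group $H$ is finite; hence $\dim M=\dim(G/H)+1=7$, and each $K^\pm_0$ is a connected Lie group acting transitively with finite isotropy on the sphere $K^\pm/H$, so $K^\pm_0\in\{1,S^1,S^3\}$ and $l_\pm=\dim K^\pm_0$. As $M$ is simply connected there are no exceptional orbits, whence $l_\pm\in\{1,3\}$; moreover, if $l_-\ge 2$ then $K^-$ and $H$ are connected, so $H=\{1\}$, and if $l_-=1<l_+$ then $K^-$ is a circle and $H=\bb Z_k$ by Lemma~1.6 of \cite{gwz}. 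Because the action is essential, neither $K^+$ nor $K^-$ may contain a normal factor $S^3\times 1$ or $1\times S^3$ of $G$; hence, up to conjugacy, a singular isotropy group with identity component $S^3$ is the diagonal $\Delta S^3=\{(q,q)\}$, and one with identity component $S^1$ is a circle $C^i_{p,q}$ with $\gcd(p,q)=1$ and $p,q\ge 0$ not both zero. Finally, the Primitivity Lemma~\ref{obstructions_lem_primitivity} gives $\fr g=\sum_{w\in W}\big(w\fr k^- w+w\fr k^+ w\big)$ together with the lower Weyl group bound $|W|\ge 2\dim(G/H)/(l_-+l_+)=12/(l_-+l_+)$; combined with the upper Weyl group bound (Proposition~\ref{prop_weylgroup_upperweyl}, and Lemma~\ref{lem_weyl_weylcommute} when $H$ is trivial) this reduces $|W|$ to a short list of values in each case, the generators $w_\pm$ being the unique involutions of $(N(H)\cap K^\pm)/H$.

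The argument then splits according to $(l_-,l_+)\in\{(3,3),(1,3),(1,1)\}$ up to interchanging $K^\pm$. In each case I would conjugate $K^-$ into standard position, use linear primitivity and the explicit description of circle and diagonal subgroups of $S^3\times S^3$ to list the candidates for $K^+$ up to conjugation by $N(H)_0=G$ (at most a one-parameter family in each case), compute the resulting dihedral group $W$, and discard every candidate that violates the lower Weyl group bound or Group Primitivity. The surviving diagrams are sieved by the quasipositive obstructions: whenever one of $w_\pm$ is represented by a central element of $G$, the corresponding singular orbit is totally geodesic and Lemma~\ref{obstructions_lem_totgeod} forces $6=\dim G/H\le 2l_\mp+l_\pm$, which fails in most branches; in the remaining ones the tangent spaces of the two singular orbits contain, by equivariance, subspaces of total dimension at least $\dim M$ on which the respective second fundamental forms vanish, so that the Partial Frankel Lemma~\ref{obstruction_lem_partial} applies; and Theorem~\ref{obsturctions_thm_homotopy}, together with its analogue for manifolds with the rational cohomology of a projective space, is used to identify the remaining $M$ that are rational spheres or projective spaces with their standard linear models (this is relevant, for instance, because $P_1=\bb S^7$). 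In the few branches where $M$ can only be recognized after descending to the core, I would apply the Reduction Lemma and the Core--Weyl Lemma~\ref{lem_weyl_coreweyl} to the pair $(M^H_c,N(H)_c)$. Matching the surviving group diagrams against Table~\ref{appendix_table_odd_cohom_one} produces precisely the families $E_p$ with $p\ge 0$, $P_k$, $Q_k$ and the single space $B^7$; the only further diagram occurring here is that of the $7$-manifold $R$ of \cite{gwz}, which is excluded because the argument of Verdiani and Ziller \cite{vz} rules out even a quasipositively curved invariant metric on $R$.

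The main obstacle is the case $l_-=l_+=1$. Here one must enumerate all pairs of circles $C^i_{p,q}$, $C^i_{r,s}\subset S^3\times S^3$ for which the $W$-conjugates of the two Lie algebras span $\fr g$ and the generated dihedral group is finite, while carefully tracking which parameter choices give $G$-equivariantly diffeomorphic manifolds --- the equivalence of group diagrams under $N(H)_0$-conjugation and the switch $K^-\leftrightarrow K^+$ --- and which give non-primitive, hence discarded, actions. This is the combinatorially heaviest part of \cite{gwz}; the point of the present proof is that once Sections~\ref{secweyl} and \ref{secessential} are in place, this bookkeeping goes through verbatim, the only curvature input being linear primitivity, the Weyl group bounds, Lemma~\ref{obstructions_lem_totgeod} and the Partial Frankel Lemma.
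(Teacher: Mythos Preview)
Your high-level plan is correct in spirit, and the set-up (finite $H$, $l_\pm\in\{1,3\}$, the $(3,3)$ case falling to primitivity, the $(1,3)$ case handled via weight computations for $\II$ and Partial Frankel) matches the paper. But your treatment of the crucial $(l_-,l_+)=(1,1)$ case has a genuine gap: you claim that ``this bookkeeping goes through verbatim'' from \cite{gwz} with only linear primitivity, the Weyl group bounds, Lemma~\ref{obstructions_lem_totgeod} and Partial Frankel as curvature input, and that is not true.

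The key step you are missing is the slope constraint
\[
1=\min\{|q_+|,|q_-|\}=\min\{|p_+|,|p_-|\}
\]
for the circles $K^\pm_0=C^i_{(p_\pm,q_\pm)}$. In positive curvature this type of restriction comes from stronger obstructions that do not survive to quasipositive curvature. The paper obtains it by a new argument: one passes to the quotient $M/(S^3\times 1)$, notes that the projections $\bar B_\pm$ of the singular orbits are totally geodesic (as strata of fixed orbit type) $2$-dimensional submanifolds of a $4$-dimensional Alexandrov space, and then invokes a quasipositive version of Petrunin's generalization of Frankel's theorem, justified via the second variation inequality on the manifold part of the quotient. None of the tools on your list gives this; without it you cannot cut the two-parameter family of slope pairs down to the finite list that is then sieved by the weight/Partial Frankel arguments. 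A second, smaller omission is the preliminary step showing that $H$ contains no element of the form $(a,\pm 1)$ or $(\pm 1,a)$ with $a$ non-central: this uses that $M^h$ is then totally geodesic of codimension $2$ and reduces to the already-classified $\SO(2)\times\SO(3)$ action on $\bb S^5$, which is again not captured by your stated toolkit. Once these two inputs are in place, the remainder of the $(1,1)$ analysis---the case split on $\bar H\in\{\bb Z_2,\bb Z_2\oplus\bb Z_2\}$, the weight computations on $S^2(T^\pm)$, the $W_v$-subspace trick for Partial Frankel, and the exclusion of $R$ via \cite{vz}---does proceed essentially as you describe.
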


\begin{proof} Since the action is not necessarily effective, we will denote groups in the effective action by $\bar G$, $\bar K^\pm$ and $\bar H$. $H$ is finite and therefore $l_\pm \in \lbrace 1,3\rbrace$. 
As in \cite{gwz} Lemma 7.2, we can rule out the case $l_-=l_+=3$ by primitivity.\\
	Hence at least one of the singular orbits, say $B_-$, has codimension $2$. Therefore $K_0^- = C^i_{(p,q)}$, with $p$ and $q$ relatively prime. Note that, unlike in \cite{gwz}, $p = 0$ is allowed here. $T^- \coloneqq T_{p_-}B_- = W_0 \oplus W_1 \oplus W_2$, which are invariant subspaces of the $K_0^-$ action on $T^-$ with weight $0$ on $W_0$, $2p$ on $W_1$ and $2q$ on $W_2$. From this we can compute the weights of the $K_0^-$ action on 
	\begin{align*}
		S^2(T^-) = S^2(W_0) \oplus W_0 \otimes W_1 \oplus W_0 \otimes W_2 \oplus S^2(W_1) \oplus W_1 \otimes W_2 \oplus S^2(W_2)
	\end{align*}
	The weights are $0$; $2p$; $2q$; $4p$, $0$; $2p \pm 2q$; $4q$, $0$.\\
	Let $l_+ = 3$. Then $K^-$ is connected and $K_0^+ \cong \Delta S^3$. Note, that $B_+$ is totally geodesic, since the central element in $K_0^+$ is also central in $G$. If $H$ is connected, then the normal weight at $B_-$ is $1$ and $B_-$ is totally geodesic, since all weights on $S^2(T^-)$ are even. But this contradicts the Partial Frankel Lemma, since $\dim B_+ = 3$ and $\dim B_- = 5$. Hence $H = \bb Z_2$. Note, that $H$ is central and thus $N(H) = G$. Therefore we can assume, that $K_0^+ = \lbrace (a,a) \vert \, a \in S^3 \rbrace$ and $K^- = C^i_{(p,q)}$ 	with $0 \le p \le q$ relatively prime integers. The normal weight at $B_-$ is $2$. Since $B_+$ is three dimensional, $\II^-$ cannot vanish on $S^2(W_1 \oplus W_2)$, since $W_1 \oplus W_2$ is four dimensional and this would be a contradiction to the Partial Frankel Lemma. Therefore one of the weights of the $K_0^-$ action on $S^1(W_1\oplus W_2)$ must be $2$ and we get $q = p+1$. Thus we have the diagram of the $S^3 \times S^3$ action on $E^7_p$.\\
	Now we assume, that both $l_\pm = 1$. Since $\bar H$ only consists of elements of order $2$ by Lemma \ref{lem_weylgroup_codim2}, $H$ can only consist of elements of order $2$ or $4$. Hence the normal weights can only be $2$ or $4$. We assume $K^\pm_0 \cong C^i_{(p_\pm, q_\pm)}$. Due to primitivity none of $p_\pm$ and $q_\pm$ can be zero.
	First we claim that $H$ cannot contain an element $h$ of the form $(a,\pm 1)$ or $(\pm 1, a)$ with $a$, unless $a$ is central: Assume $H$ contains such an element. Then $N(h)_0 = S^1 \times S^3$ or $S^3 \times S^1$ and hence $M^h$ would be a totally geodesic submanifold of codimension $2$. Therefore $M^h$ has dimension $5$. Since none of $p_\pm$ and $q_\pm$ is zero, the action is still essential. But then up to covering it must the the tensor product action of $\SO(2) \times \SO(3)$ on $\bb S^5$, which implies that one of $p_\pm$, $q_\pm$ must be zero, a contradiction. By the same reasoning as in the proof of \cite{gwz} Lemma 7.2, we can now show	$1 = \min\lbrace \vert q_+ \vert, \vert q_-\vert\rbrace  = \min\lbrace \vert p_+ \vert, \vert p_-\vert \rbrace$. The proof therein still works, since it relies on Petrunin's generalization of Frankels Theorem \cite{petr} applied to $M/(S^3\times 1)$, which only needs that a shortest geodesic between the projections of $B_\pm$ intersects an open set of positive curvature.\\
	$\bar H = \bb Z_2$: By Lemma \ref{lem_weylgroup_codim2} we can assume, that $\bar K^- = \SO(2)$ and $\bar K^+ = \O(2)$. The nontrivial element $\bar h$ is in the second component of $\bar K ^+$. Let $h$ denote an element in $H$, whose image is $\bar h$ in the effective picture. $h = (h_1,h_2)$, with $h_1,h_2$ being imaginary unit quanternions. As in \cite{gwz} Lemma 7.2 
	we can assume, that $K_0^- = C^i_{(p_-,q_-)}$ with $0 \le p_-\le q_-$ relatively prime and both odd, $h = (i, \pm i)$ and after possible conjugating with an element in $N(H)$, that $K_0^+ = C^j_{(p_+,q_+)}$ with $p_+$ and $q_+$ being positive relatively prime integers. \\
	There are now two possible cases for $H$. Either $H = \bb Z_4 = \lbrace  \pm (1,1), \pm h \rbrace $ or $H = \bb Z_4 \oplus \bb Z_2 = \lbrace (\pm 1, \pm 1), (\pm i, \pm i) \rbrace$. \\
	$H = \bb Z_4$: If one of $p_+$ and $q_+$ is even, then $K_0^+\cap \bb Z_4 = \lbrace e \rbrace$. Hence the normal weight is $1$. Since all weights in $S^2(T^+)$ are even, $B_+$ is totally geodesic of codimension $2$ and hence $M$ is a sphere by the Connectedness Lemma. Therefore we assume both $p_+$ and $q_+$ to be odd and hence the normal weight is $2$. But since $p_+$ and $q_+$ are both odd $\II^+\vert_{S^2(W_1 \oplus W_2)} = 0$. $\II^-\vert_{S^2(W_0 \oplus W_i)}$ for $i=1,2$ and $\II^-\vert_{S^2(W_1\oplus W_2)}$ cannot vanish by the Partial Frankel Lemma. If the normal weight at $B_-$ is $2$, then both $p_-=q_-=1$, since otherwise $\II^-\vert_{S^2(W_0 \oplus W_i)}=0$ for both $i$. But then $\II^-\vert_{S^2(W_1\oplus W_2)} =0$. If the normal weight at $B_-$ is $4$, then both $p_-$ and $q_-$ must be $2$, but they are relatively prime, which rules out this case.\\
	$H = \bb Z_2 \oplus \bb Z_4$: We have $H\cap K_0^- = \bb Z_4$ and $H\cap K_0^+ = \bb Z_2$. If $p_- \ge 2$, then $(p_+,q_+) = (1,1)$ and $q_--p_- = 2$, since otherwise $B_-$ is totally geodesic. But then $\II^+\vert_{S^2(W_1^+ \oplus W_2^+)} =0$ and $\II^-\vert_{ S^2(W_0^-\oplus W_2^-) }= 0$, a contradiction to the Partial Frankel Lemma. 
	Hence let $p_- = 1$ and therefore $\II^-\vert_{S^2(W_0^- \oplus W_1^-)} = 0$. If $\vert p_+ - q_+\vert \neq 1$, then $\II^+\vert_{S^2(W_1^+\oplus W_2^+)} = 0$, contradicting the Partial Frankel Lemma. Therefore $\vert p_+-q_+\vert = 1$. If $q_+ \ge 2$, then $q_- = 1$ and up to interchanging $(p_+, q_+) = (p,p+1)$ for $p \ge 1$, which is the diagram of $Q_p$. Hence we assume $q_+ = 1$. Then $p _+ =2$ and $\II^+\vert_{S^2(W_0^+ \oplus W_1^+)}=0$. Since $q_- = 1$ is the former case, we assume $q_-\ge 3$. If $q_- - p_- \neq2$, we get a contradiction to the Partial Frankel Lemma by the following: 
	Pick $v \in W_1^- \backslash \lbrace 0 \rbrace$ and define $W_v \coloneqq W_0^- \oplus W_2^- \oplus \bb R v$. Note that $W_v$ is a four dimensional subspace of $T^-$ and each $w \in W_v$ has the form $w = w_0 + w_2 + b v$ for some $b \in \bb R$ and $w_i \in W_i^-$. Now by parallel transporting $W_0^+\oplus W_1^+$ along $c$ to $T_{p_-}M$, we see that the intersection with each $W_v$ is at least one dimensional.  Let $0 \neq w_v $ be an element in the intersection. Then: 
	\begin{align*}
			\II^-(w_v,w_v) &= \II^-(w_0,w_0) + \II^-(w_2,w_2) + b^2\II^-(v,v) \\&+ 2\II^-(w_0,w_2) + 2b\II^-(w_2,v) + 2ab\II^-(w_0,v) \\&= b^2\II^-(v,v)
	\end{align*}
	If $\II^-(v,v) = 0$, we are done. Suppose $\II^-(v,v)\neq 0$. Since the action of $K^-$ is transitive on $\bb S^{l_-}$, we can choose $v$, such that $\langle \II^-(w_{gv}, w_{gv}), \dot c(-1)\rangle = b^2(g)\langle g \II(v,v), \dot c(-1)\rangle \ge 0$, contradicting the Partial Frankel Lemma. \\
	Hence $q_- =3$ and we get the manifold $R$ (cf. Table \ref{appendix_table_odd_cohom_one}). But in this case Ziller and Verdiani constructed non trivial parallel Jacobi fields along a horizontal geodesic on all nonnegatively curved cohomogeneity one metrics on $R$ (cf. \cite{vz}). Hence it cannot have quasipositive curvature.\\
	$\bar H = \bb Z_2 \oplus \bb Z_2$: In this case $\bar K^\pm \cong \O(2)$. 
	As in \cite{gwz} Lemma 7.2, we can arrange $H = \Delta Q$, where $Q = \lbrace \pm1, \pm i, \pm j, \pm k \rbrace \subset S^3$, $K_0^- = C^i_{(p_-,q_-)}$ and $K_0^+ = C^j_{(p_+,q_+)}$, with $p_\pm$ and $q_\pm$ odd and furthermore $0< p_-\le q_-$ and $0< p_+,q_+$. 
	The weights of both normal actions are $4$. If both $p_+$ and $q_+$ are at least $2$, then $p_- = q_- =1$. Since $p_+$ and $q_+$ are odd, $B_+$ is totally geodesic unless $\vert p_+ - q_+\vert = 2$. Hence up to ordering $(p_+, q_+) = (p, p+2)$ for $p\ge 1$, which is the diagram of $P_p$. Now suppose $p_+ = 1$. If $q_+ \neq 1$, then $p_- = q_- =1$ and $\II^-\vert_{S^2(W_0^- \oplus W_i^-)} = 0$ and if $q_+ \ge 5$, then $\II^+$ vanishes on $W_0 \otimes W_1 \oplus S^2(W_0 \oplus W_2) \oplus W_1 \otimes W_2$. In order for $B_+$ to be not totally geodesic $\II^+\vert_{S^2(W_1)} \neq 0$. As before we get a contradiction to the Partial Frankel Lemma.\\
	If $q_+ = 1$ and $p_+\ge 3$, then $p_- =1$ and $q_- \ge 3$. Hence $\II^-\vert_{S^2(W_0^- \oplus W_2^-)} = 0$ and $\II^+\vert_{S^2(W_0^+ \oplus W_1^+)} = 0$. If $\vert p_- - q_-\vert \neq 2$ or $\vert p_+ - q_+\vert \neq 2$, we get a contradiction to the  Partial Frankel Lemma, as before. Therefore $(p_-,q_-) = (1,3)$ and $(p_+,q_+) = (3,1)$, which is the diagram of $B^7$. These are all cases, since $p_\pm = 1 = q_\pm$ is not primitive.
\end{proof}

\subsection{$G$ semisimple of rank three}

	In the following let $G$ be a compact Lie group of rank $3$ acting almost effectively and isometrically with cohomogeneity one on a quasipositively curved Riemannian manifold $M$, such that the principal isotropy group $H$ has corank $2$.  $H_0$ has rank $1$ and is therefore one of $S^1$, $S^3$, $\SO(3)$. Note, that $H/H_0$ is cyclic and by the Isotropy Lemma $\max\lbrace l_-,l_+\rbrace \ge 2$ and by the Rank Lemma at least one of $l_\pm$ is odd.\\
	For $H_0=S^1$: $(l_-,l_+) = (1,2), (1,3),(2,3),(3,3)$. Hence for a singular isotropy group $K$: $K_0 = T^2$, if $l=1$; $K_0 = \SO(3),$ $S^3$, if $l = 2$; $K_0 = \U(2),$ $S^1 \times S^3$, if $l=3$; where $l+1$ is the codimension of the singular orbit with isotropy group $K$. \\
	For $H_0$ of dimension $3$: $l_\pm = 1,3,5,7$. For a singular isotropy group $K$: $K_0=\U(2)$, $S^3\times S^1$, $\SO(3) \times S^1$, if $l=1$; $K_0 = \SO(4),$ $S^3 \times S^3$, $l = 3$; $K_0 = \SU(3)$, if $l = 5$; $K_0 = \Sp(2)$, if $l = 7$. 
	We will first look at the case, where $G$ contains a normal subgroup of rank $1$ and prove the following analogue of Proposition 8.1 in \cite{gwz}:	
\begin{prop}
	If $G$ has a normal subgroup of rank $1$ and the action is essential, then it is equivariantly diffeomorphic to the tensor product action of \emph{$\SU(2) \times \SU(3)$} on $\bb S^{11}$. 
\end{prop}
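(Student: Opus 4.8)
The plan is to mirror the proof of Proposition 8.1 in \cite{gwz}, replacing each appeal to a positively curved obstruction by its quasipositive analogue established above. Write $G = L \times G'$ with $L$ a rank one normal subgroup, so $L \cong S^3$ or $L \cong \SO(3)$, and $G'$ semisimple of rank two, hence up to finite cover one of $S^3 \times S^3$, $\SU(3)$, $\Sp(2)$ or $G_2$. The first observation is that, since the action is essential, $L$ is contained in neither $K^-$ nor $K^+$: otherwise a singular isotropy group would contain a normal factor of $G$, forcing some normal subgroup to act fixed point homogeneously or orbit equivalently. Next I would record the structural constraints: $H$ has corank two, so $H_0$ has rank one (hence $H_0 \in \{S^1, S^3, \SO(3)\}$) and, by the Rank Lemma (Lemma \ref{obstructions_lem_rank}), each $K^\pm$ has corank at most one; by the Isotropy Lemma (Lemma \ref{obstructons_lem_isotropy}) the pairs $(K^\pm, H)$ appear in Table \ref{appendix_table_homsphere}, so $l_\pm \in \{1,2,3,5,7\}$ with at least one of them odd and $\max\{l_-,l_+\} \ge 2$; and by the Primitivity Lemma (Lemma \ref{obstructions_lem_primitivity}) the subgroup generated by $K^-$ and any conjugate of $K^+$ by a core element is all of $G$, while linear primitivity says the isotropy Lie algebras along $c$ span $\fr g = \fr l \oplus \fr g'$.

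The bulk of the argument then consists in eliminating $G' \in \{S^3 \times S^3, \Sp(2), G_2\}$. For these $\dim G$ is $9$, $13$ or $17$ respectively, while $\dim H \le 3$, so $\dim(G/H)$ is forced large; combining the lower Weyl group bound $\vert W\vert \ge 2\dim(G/H)/(l_-+l_+)$ with the upper Weyl group bound (Proposition \ref{prop_weylgroup_upperweyl}, which gives $\vert W \vert \le 8$, and $\vert W \vert \le 4$ as soon as $H$ is connected with $l_\pm$ odd) shows $l_- + l_+$ must be large, and I would run through the finitely many admissible $(H_0, K^-, K^+, l_-, l_+)$ with the required projections to $L$. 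In each surviving configuration one of the following should yield a contradiction: the normalizer of $H$ contains a subgroup acting transitively on an irreducible summand of $\fr g$, which kills primitivity; a singular orbit is totally geodesic — for instance because $K^\pm$ contains a central involution of $G$ not in $H$ — and then Lemma \ref{obstructions_lem_totgeod}, the Partial Frankel Lemma (Lemma \ref{obstruction_lem_partial}), or the codimension-two connectedness consequence of Lemma \ref{obstructions_thm_connect} gives the wrong isotropy data for a homotopy sphere; or the induced action of $N(H)_0/H$ on a fixed point component violates the Core-Weyl Lemma (Lemma \ref{lem_weyl_coreweyl}). This exhaustive case-checking is the technical heart of the proof and the step I expect to be the main obstacle, since it requires reproducing the list-by-list elimination of \cite{gwz} while verifying that every positive-curvature input there is covered by a quasipositive substitute.

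What remains is $G' = \SU(3)$ together with $L \cong S^3$, so $G$ is, up to finite cover, $\SU(2) \times \SU(3)$. Here $H_0$ has rank one and is not contained in $L$, so its image in each factor is nontrivial; running through the homogeneous sphere table with the corank constraints and discarding the non-primitive and Weyl-bound-violating possibilities pins down $H_0 \cong S^1$ (embedded so that it meets both factors), $K^-_0 \cong S^3$ embedded diagonally with $K^-/H = \bb S^2$, and $K^+_0 \cong \U(2)$ with $K^+/H = \bb S^3$; thus $(l_-, l_+) = (2,3)$. Finally I would compare the discrete parts $H/H_0 \cong K^\pm/K^\pm_0$ and apply the equivalence criterion for group diagrams to identify this diagram with that of the tensor product representation of $\SU(2) \times \SU(3)$ on $\bb C^2 \otimes \bb C^3 \cong \bb R^{12}$, whose singular orbits are the Segre variety of rank one tensors (codimension four) and the locus of tensors with equal singular values (codimension three). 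Since $\SU(2) \times \SU(3) \hookrightarrow \U(6) \subset \SO(12)$ acts effectively on $\bb S^{11}$ and preserves the round metric, which has positive and hence quasipositive curvature, this completes the proof: $M$ is $G$-equivariantly diffeomorphic to $\bb S^{11}$ with the tensor product action.
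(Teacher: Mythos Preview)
Your plan matches the paper's approach exactly: both follow Proposition 8.1 of \cite{gwz}, substituting the quasipositive obstructions (Lemma \ref{obstructions_lem_totgeod}, the Partial Frankel Lemma, the Core-Weyl Lemma) for the positive-curvature inputs, and eliminating $L \in \{S^3 \times S^3, \Sp(2), G_2\}$ by a mix of Weyl-group bounds, totally geodesic orbits, and fixed-point reductions before pinning down the $(l_-,l_+)=(2,3)$ diagram for $L=\SU(3)$. One refinement worth noting is that for $L=\SU(3)$ the paper splits on whether the involution $\iota \in H_0$ is central in $G$, with the central subcase taken verbatim from \cite{gwz} and only the non-central subcase requiring the new quasipositive arguments.
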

\begin{proof}
	We will closely follow the proof of Propostion 8.1 in \cite{gwz}. By allowing a finite kernel, we can assume $G = S^3 \times L$, where $L$ is a compact simply connected Lie group of rank $2$. Hence we have the cases $L = S^3 \times S^3$, $\SU(3)$, $\Sp(2)$ or $G_2$. If $H_0$ is $3$-dimensional, it must be contained in $L$, since otherwise the action is not essential. \\
	$L=S^3 \times S^3$: $H$ cannot be $3$-dimensional, since then the action can not be essential, because $H$ would project onto one of the factors. Hence $H_0 = S^1$ and one of $l_\pm$, say $l_-$ is equal to $2$ or $3$. \\
	$l_- =3$: The semisimple part of $K^-$ is $S^3$ projecting onto at least one of the factors. Hence, the involution in $S^3$ is central in $G$ and acts as the reflection on the normal sphere. Therefore $B_-$ is totally geodesic and  $8 = \dim G/H \le 2\cdot l_- +l_+ = 6 + l_+$, which implies $l_+ \ge 2$. If $l_+ =3$, then also $B_+$ is totally geodesic contradicting the Partial Frankel Lemma. If $l_+ = 2$, then  all groups are connected and $K^+ \cong S^3$, $K^- \cong S^3 \times S^1$. Hence $K^-$ cannot project onto all factors, but must project onto at least two, since the action is essential. Therefore $K^- = \lbrace (a,a,z) \vert \, a \in S^3, z \in S^1 \rbrace$ and $H= \lbrace (z^n,z^m,z^k) \vert \, z \in S^1 \rbrace$ with $n,m,k \neq 0$, because otherwise the action is not essential. Since $H$ must be the maximal torus of some $S^3$, we have $n = m = k = 1$ and $K^+ = \lbrace (a,a,a) \vert \, a \in S^3 \rbrace$, but then $K^+$ and $K^-$ only generate $\Delta S^3 \times S^3$ and the action is not primitive.
	Hence we are left with the case $l_- = 2$ and $l_+ = 1$ and $K_0^- = S^3$ and $K^+=T^2$. Since the action is primitive, $K_0^-$ must project to all factors and thus $K_0^- = \lbrace (a,a,a) \vert\, a \in S^3 \rbrace$ and $H_0 = \lbrace (z,z,z) \vert z \in S^1\rbrace$. Furthermore $K^-/K_0^- = H/H_0$ has at most $2$ elements, because $K^-_0$ can only be extended by the central elements of $G$. $w_-$ can be reperesented by $(j,j,j)$.
	Let $\iota \in T^2$ represent $w_+$. $\iota = (\iota_1,\iota_2,\iota_3)$ has order $2$, if $H$ is connected, and order $4$ otherwise, with $\iota^2 \in H$. Then $j \cdot \iota_i = \pm \iota_i \cdot j$, with negative sign, if $\iota_i$ is not central in $S^3$. But then $\iota \cdot (j,j,j) = (j,j,j) \cdot \iota \cdot \iota^2$. Since $\iota^2$ is contained in $H$ this implies $\vert W \vert \le 4$ and by linear primitivity $8 \le 2 \cdot (l_- + l_+) = 6$, a contradiction.\\
	$L=\SU(3)$: 
	With the same reasoning as in \cite{gwz} Proposition 8.1, we can assume $H_0 = S^1$. $H_0$ cannot be contained in the $S^3$-factor, since then the action is not essential.
	We have the following two cases:
	\begin{enumerate}[label = (\alph*), topsep=0pt,itemsep=-1ex,partopsep=1ex,parsep=1ex]
		\item The involution $\iota \in H_0$ is not central in $G$.
		\item The involution $\iota \in H_0$ is central in $G$
	\end{enumerate}
	In the second case, the proof of Proposition 8.1 in \cite{gwz} applies and hence we only give the proof of the first case.\\
	Suppose $l_- = 1$. Then $K^- = T^2$, $l_+ \ge 2$ and $H/H_0$ is cyclic. If $l_+ = 2$ then $K_0^+$ is isomorphic to $\SU(2)$ or $\SO(3)$. If $K^+_0 \cong \SO(3)$, then $K^+_0 \subseteq \SU(3)$ and hence the action is not primitive. If $K^+_0 \cong \SU(2)$, then $K_0^+$ is diagonal in $S^3 \times \SU(3)$ and cannot project to $\SO(3)$, since then the central element in $H_0$ will be central in $G$. Hence $K_0^+$ projects to the $\SU(2)$-block and $K^-\subseteq N(H_0)_0 = S^1 \times T^2$, which is not primitive. If $l_+ = 3$, then $N(H) \cap K^\pm /H$ are both at least one dimensional and hence $\vert W \vert \le 4$. This implies $10 = \dim G/H \le 2\cdot (l_-+l_+) = 8$, a contradiction. \\
	Hence we are left with the cases $(l_-,l_+) = (2,3)$, $(3,3)$ and all groups are connected. Suppose $(l_-,l_+) = (2,3)$. $K^- = \SO(3)$ or $\SU(2)$, $H = \lbrace (z^n, \diag(z^m,z^k,z^l) ) \vert\, z \in S^1 \rbrace$, $K^+ \cong \SU(2) \times S^1$ or $\U(2)$. $K^-$ cannot be contained in $\SU(3)$: Otherwise, the $\SU(2)$-factor of $K^+$ must be diagonal by primitivity. Since $H$ is conjugated to $\lbrace \diag(z,\bar z, 1) \vert \, z \in S^1 \rbrace$, it does not project to the $S^3$-factor and hence must commute with $\Delta \SU(2)$. But there is no $\SU(2)$ commuting with $H$. Therefore $K^- \cong \SU(2)$ is diagonal projecting to the upper $2 \times 2$-block in $\SU(3)$ and $H = \lbrace (z,  \diag(z,\bar z, 1))\vert\, z \in S^1\rbrace$. Since the action is primitive the $\SU(2)$-factor of $K^+$ cannot project to the upper block. The only other possibility up to conjugacy by an element in the normalizer of $H$, that commutes with some $S^1$ will be the lower $\SU(2)$ block in $\SU(3)$. But then $K^+ = \Delta S^1 \cdot \SU(2)$ and we get the tensor product action of $\SU(2) \times \SU(3)$ on $\bb S^{11}$. \\
	We are left with the case $(l_-,l_+) = (3,3)$. We can assume, that the $\SU(2)$-factor of $K^- = \SU(2) \cdot S^1$ is diagonal, since the action is essential and primitive. Note that in this case the $S^1$-factor of $K^-$ is contained in $\SU(3)$ and $K^- \cong \SU(2) \times S^1$. 
	We can assume, that the $\SU(2)$-factor of $K^-$ projects to the upper $2 \times 2$ block. This forces  the $S^1$-factor of $K^-$ to be  $\lbrace 1 \rbrace \times \lbrace \diag(z,z,\overline z^2) \vert \, z \in S^1 \rbrace$. Hence the element $(-1,1)$, which is central in $G$, is contained in $K^-$. Since the involution $\iota \in H$ is not central, $B_-$ is totally geodesic as the fixed point component of an involution. By Lemma \ref{obstructions_lem_totgeod}, we have $10 = \dim G/H \le 2\cdot l_-+l_+ = 9$, a contradiction.\\
	$L = \Sp(2)$: 
	 Like in \cite{gwz}, we can assume $H_0 = S^1 = (z^n,\diag(z^m,z^l))$. Note that the involution in $H_0$ must be central, since otherwise $N(\iota)_0 = S^3 \times \Sp(1) \times \Sp(1)$ acts with cohomogeneity one and one dimensional principal isotropy group on a quasipositively curved manifold, contradicting the first case. Therefore $m$ and $l$ are either both even and non zero or both odd. By the Isotropy Lemma $m = l$. Since the action is essential one of $K^\pm$, say $K^-$ projects to $S^3$. If $n=0$, then $H_0$ acts trivially on $S^3$ and we can assume $K^-_0 = \SU(2)\cdot H_0$, by primitivity. Hence $K^-$ projects to $\U(2) \subset \Sp(2)$. If $n \neq 0$, then $N(H)/H\cong \U(2)$ acts with trivial principal isotropy group on a component of $M^H$. By the Core-Weyl Lemma, this action is fixed point homogeneous and hence contains the $\SU(2)$-factor in one of the singular isotropy groups. Therefore we can assume, that $K^- = \SU(2)\cdot H_0$ projects to $\U(2)$. If $K^-$ projects to $\U(2) \subset \Sp(2)$, then $l_- = 3$ and the adjoint action of the $\SU(2)$-factor in $K^-$ on $\fr g$ factorizes through $\SO(3)$. Since it acts transitively on the normal sphere, $B_-$ is totally geodesic by the equivariance of the second fundamental form and therefore by Lemma \ref{obstructions_lem_totgeod}, we have $12= \dim G/H \le 2 \cdot l_- + l_+ = 6 +l_+$, a contradiction, since $l_+ \le 3$.\\
	$L = G_2$: Suppose $H_0$ is three dimensional, then $H_0 = \SU(2) \subseteq \SU(3)$, since this is the only spherical subgroup of $G_2$ of dimension three. One of $K_0^\pm$, say $K_0^-$, is isomorphic to $\SU(3)$ and $l_- =5$. By primitivity $K^+_0 $ projects to the $S^3$-factor and since the action is essential $K_0^+$ contains $\Delta \SU(2)$ as a normal factor. $l_+$ can neither be $1$ or $5$, since $K_0^+$ contains $\Delta \SU(2)$ as a diagonal subgroup and $H$ is $3$-dimensional. Hence $l_+ = 3$ and all groups are connected. $K^+ = \Delta \SU(2) \cdot \SU(2)$, projecting to the maximal rank subgroup $\SO(4)$ in $G_2$. Since the $\SU(2)$-factors of $\SO(4)$ intersect in their centers $K^+$ contains the central element of $G$. But $H$ does not contain this element and hence $B_+$ is totally geodesic. Therefore $14 = \dim G/H \le 2\cdot l_+ +l_- = 11$, a contradiction. \\
	Now we can assume $H_0 = S^1$ and $K^-$ has rank $2$. Hence $(l_-,l_+) = (1,2)$, $(1,3)$, $(3,2)$, $(3,3)$. By the lower Weyl group bound $16 = \dim G/H \le \vert W\vert /2 (l_- + l_+)$. If $(l_-,l_+) = (3,3)$, then $\vert W \vert \le 4$ and $16 \le 2(3+3) = 12$, a contradiction. If $(l_-,l_+) = (1,2)$, then $H/H_0$ is cyclic and $\vert W \vert \le 8$ and therefore $16 \le 4(2+1) = 12$. Suppose $(l_-, l_+) = (1,3)$. Then $K^- = T^2$, since it is connected and $K_0^+ = \SU(2)\cdot S^1$. But then $N(H) \cap K^\pm/H \cong S^1$, which implies $\vert W \vert \le 4$ and hence $16 \le 2(1+3) = 8$. Hence we are left with the case $(l_-,l_+) = (3,2)$ and all groups are connected. $K^+ \cong \SU(2)$ must be diagonal, since otherwise the action is either not essential or $H$ contains an involution $\iota$ such that $N(\iota)_0=S^3 \times \SO(4)$, which acts with 1-dimensional principal isotropy group by cohomogeneity one on a quasipositively curved manifold, contradicting the first case. By the same argument $\SU(2)$ must project to $\SO(3)\subset G_2$. First suppose, that $S^3$ acts effectively free. Then $G_2$ acts essentially on an evendimensional quasipositively curved manifold with one dimensional principal isotropy group, which cannot exist by the evendimensional classification. Hence the action of $S^3$ is not effectively free. But then $K^-$ must contain a noncentral element $a$ of the $S^3$-factor. Hence $N(a) = S^1 \times G_2$ and $B_-$ contains an $11$-dimensional totally geodesic subspace. By the same argument as for Lemma \ref{obstructions_lem_totgeod}, the horizontal geodesic has index at least $17-2\cdot6 +1 =6$. Since this must be bounded from above by $l_+=2$, we have a contradiction.
\end{proof} 

\begin{prop}
	If $G$ is simple with $\rk G = 3$, then it is either the linear irreducible representation of \emph{$\SU(4)$} on $\bb S^{13}$ or the cohomogeneity one action of \emph{$\SU(4)$} on one of the Baizaikin spaces $B_p^{13}$ for $p \ge 0$. 
\end{prop}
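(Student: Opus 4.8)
The plan is to follow the proof of the corresponding statement in positive curvature, Proposition 8.2 of \cite{gwz}, carrying out in detail only those steps where the positively curved argument relied on tools unavailable in quasipositive curvature. Since $G$ is compact, simple and of rank three, it is locally isomorphic to one of $\SU(4)$, $\Sp(3)$ or $\Spin(7)$. Because $H$ has corank two, $H_0$ has rank one, so $H_0\in\{S^1,S^3,\SO(3)\}$, and the pairs $(l_-,l_+)$ and admissible identity components $K^\pm_0$ are restricted to the short lists recalled at the beginning of this subsection. The main quantitative tool is the inequality $\dim G/H\le\tfrac{|W|}{2}(l_-+l_+)$ from the Lower Weyl Group Bound, combined with the Upper Weyl Group Bound (Proposition \ref{prop_weylgroup_upperweyl}): in the corank-two case $|W|\le 8$ since $H/H_0$ is cyclic, and $|W|\le 4$ whenever both $l_\pm$ are odd or whenever none of $(N(H)\cap K^\pm)/H$ is finite. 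Recognition of the surviving diagrams is via the classification of homogeneous spheres, primitivity, the Isotropy Lemma, the Core-Weyl Lemma (Lemma \ref{lem_weyl_coreweyl}), and — as substitutes for the zero-curvature arguments of \cite{gwz} — Lemma \ref{obstructions_lem_totgeod} and the Partial Frankel Lemma (Lemma \ref{obstruction_lem_partial}).

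First I would settle the case $H_0=S^1$, so that $K^\pm$ has rank two and $(l_-,l_+)\in\{(1,2),(1,3),(2,3),(3,3)\}$, while $\dim G/H=\dim G-1$ equals $14$ for $\SU(4)$ and $20$ for $\Sp(3),\Spin(7)$. The pair $(1,2)$ is excluded at once by $\dim G/H\le 4(l_-+l_+)=12$. For $(3,3)$ both $l_\pm$ are odd, so $|W|\le 4$ and $\dim G/H\le 12$. For $(1,3)$, by part (c) of \cite{gwz}, Lemma 1.6, $K^-=K^-_0=T^2$, which is abelian and hence normalizes $H$, so $(N(H)\cap K^-)/H=T^2/H$ is one-dimensional, whence $|W|\le 4$ and $\dim G/H\le 8$; this exhausts all three groups. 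The remaining pair $(2,3)$ survives the crude bound only on the boundary ($|W|=8$ forced); it is eliminated by the finer analysis of the corresponding part of \cite{gwz}: the three-dimensional $K^-_0$ acting transitively on $\bb S^2$ forces, via primitivity and the structure of the slice representations, a singular orbit to be totally geodesic (or a large subspace of a tangent space to be killed by the second fundamental form), after which Lemma \ref{obstructions_lem_totgeod} or the Partial Frankel Lemma yields a contradiction. Hence $H_0$ is three-dimensional.

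Now $\dim H=3$, $\dim G/H=\dim G-3$, and $l_\pm\in\{1,3,5,7\}$; if $H_0$ is locally $\SU(2)$ then its four-dimensional isotropy subrepresentation must degenerate, so one of $K^\pm_0$ is $\SU(3)$ or $\Sp(2)$. For $\Sp(3)$ and $\Spin(7)$ one has $\dim G/H=18$, so the bound forces $l_-+l_+\ge 5$; running through the resulting finite list of diagrams and eliminating each by primitivity, the Isotropy Lemma, the Core-Weyl Lemma when a reduced action $N(H)_0/H$ would act with trivial principal but positive-dimensional singular isotropy, Lemma \ref{obstructions_lem_totgeod} when a central element of $G$ lies in $K^\pm\setminus H$, and the Partial Frankel Lemma applied to the subspaces of $T^\pm$ annihilated by the second fundamental form (these three replacing the zero-curvature arguments of \cite{gwz}) leaves $\Sp(3)$ and $\Spin(7)$ with no admissible diagram. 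For $G=\SU(4)$ one has $\dim G/H=12$, and the same analysis leaves only the diagrams with $H_0=\SU(2)$, one singular orbit of codimension eight ($K^-_0=\Sp(2)$) and the other of codimension six ($K^+_0=\SU(3)$), with a one-parameter family of relative embeddings indexed by $p\ge 0$; matching these against the known group diagrams identifies $M$ with the Bazaikin space $B_p^{13}$, $p\ge 0$, carrying the action of Table \ref{appendix_table_odd_cohom_one}, or with $\bb S^{13}$ carrying a linear $\SU(4)$-action, the recognition proceeding as in \cite{gwz}.

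The hard part is precisely the borderline configurations — the $(2,3)$ pair with $H_0=S^1$, the $l_-+l_+=5,6$ diagrams for $\Sp(3)$ and $\Spin(7)$, and the $\SU(4)$-diagrams leading to $B_p^{13}$ — where the positively curved proof of \cite{gwz} rules out a candidate (or pins down the relevant embedding) by exhibiting a vanishing sectional curvature on a singular orbit, which is no longer available here. In each such case the substitute must be one of: a pair of subspaces $V_-\subseteq T^-$, $V_+\subseteq T^+$ with $\dim V_-+\dim V_+\ge\dim M$ on which $\II^\mp$ vanishes, read off from the weights of the slice representations of $K^\pm_0$ and the equivariance of $\II^\pm$, feeding into the Partial Frankel Lemma; or the totally geodesic estimate of Lemma \ref{obstructions_lem_totgeod} once a central involution is located in a singular isotropy group; or a contradiction to the Core-Weyl Lemma. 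Verifying that these replacements cover every case that \cite{gwz} disposed of with zero curvatures, so that the $B_p^{13}$-family is cut out exactly and nothing new survives, is where the real work lies.
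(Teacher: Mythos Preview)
Your overall strategy (follow \cite{gwz} Proposition 8.2, patching only where positive curvature was used) is sound, but there is a genuine error in the endgame and a misdirected argument in the middle.

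First, your identification of the surviving $\SU(4)$-diagrams is wrong. You claim the only survivors have $K^-_0=\Sp(2)$ and $K^+_0=\SU(3)$, i.e.\ $(l_-,l_+)=(7,5)$, ``with a one-parameter family of relative embeddings indexed by $p\ge 0$''. In fact two distinct branches survive: the linear action on $\bb S^{13}$ has $K^-=\SU(3)$, $K^+=\Sp(2)$, $(l_-,l_+)=(5,7)$ and no parameter, while the Bazaikin family $B_p^{13}$ has $K^-=\Sp(2)\cdot\bb Z_2$, $K^+=\SU(2)\cdot S^1_{p,p+1}$, $(l_-,l_+)=(7,1)$ (see Table \ref{appendix_table_odd_cohom_one}). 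The paper reaches them separately: with $H_0=\SU(2)$ the lower $2\times 2$ block, one first assumes $K^-=\SU(3)$ (the cases $l_+=1,7$ go as in \cite{gwz}, yielding $\bb S^{13}$; the case $l_+=3$ is killed because $-\Id\in K^+=\SU(2)\SU(2)$ is central and not in $H$, giving $12\le 2l_++l_-=11$), and then assumes $K^-=\Sp(2)$, where after eliminating $l_+=3,5,7$ one is left with $l_+=1$, and only then does the parameter $p$ appear in the embedding of the circle factor of $K^+$. Your plan misses the entire $l_+=1$ branch and so cannot produce the Bazaikin diagrams.

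Second, your proposed mechanism for the $(l_-,l_+)=(2,3)$ case with $H_0=S^1$ in $\SU(4)$ (force a singular orbit totally geodesic, then invoke Lemma \ref{obstructions_lem_totgeod} or Partial Frankel) is not what works. With weights $(1,1,-1,-1)$ the involution of $H_0$ is $-\Id$, which \emph{is} in $H$, so neither $B_\pm$ is obviously totally geodesic. The paper instead uses a core-reduction argument: one shows the $\SU(2)$-factor of $K^+$ commutes with $H$ (else linear primitivity fails since $N(H)=S(\U(2)\U(2))$ preserves the weight-$\pm 2$ subspace), whence $N(H)/H\cong\SO(4)$ acts by cohomogeneity one on $M^H$; analysing that reduced action forces $|W|\le 4$, and then $14=\dim G/H\le 2(3+2)=10$ is the contradiction. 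You should replace your vague Partial-Frankel sketch there with this reduction.
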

\begin{proof}
	We will closely follow the proof of Proposition 8.2 in \cite{gwz}. We have to consider the cases $G = \SU(4)$, $\Sp(3)$ or $\Spin(7)$. The case $G = \Sp(3)$ can be ruled out with the same arguments as in \cite{gwz}, so the proof is left out here.\\
	$G = \SU(4)$: First assume $H_0 = S^1 =  \diag(z^{p_1}, z^{p_2},z^{p_3},z^{p_4})$. Then $H_0$ acts with weights $p_i-p_j$. By the Isotropy Lemma there can be at most two distinct weights, which only leaves the possibilities $(p_1,p_2,p_3,p_4) = (1,1,-1,-1)$, $(1,-1,0,0)$, $(1,1,1,-3)$ and $(3,3,-1,-5)$. The last three cases can be ruled out like in \cite{gwz}. Hence we only consider the first case. Note that the only possibilities for $l_\pm$ are $(l_-,l_+) = (1,2)$, $(1,3)$, $(2,3)$, $(3,3)$. If $(l_-,l_+) = (1,3)$ or $(3,3)$ then $\vert W\vert \le 4$ and by linear primitivity $14 = \dim G/H \le 2\cdot (3+3) = 12$. If $(l_-,l_+) = (1,2)$, then $H/H_0$ is cyclic and hence $\vert W \vert \le 8$. Linear primitivity again implies $14 \le 4\cdot (1+2) = 12 $, a contradiction. The only case left is $(l_-,l_+) = (2,3)$ and all groups are connected. In this case $K^+ = \SU(2)\cdot S^1$ and $K^- \cong \SU(2)$, since the involution of $H$ is central in $G$. $H$ has weights $0$ and $\pm 2$. Since the action of $H$ on the Lie algebra of $K^-$ must be nontrivial, it must be contained in the invariant subspace of $\fr{su}(4)$ on which $H$ acts with weight $\pm2$. If $H$ also acts nontrivially on the $\SU(2)$-factor of $K^+$ its Lie algebra is also contained in this subspace. Since $N(H) = S(\U(2)\U(2))$ is fixing this subspace, this is a contradiction to linear primitivity. Hence $H$ commutes with this $\SU(2)$-factor. Therefore $N(H)/H \cong \SO(4)$ acts by cohomogeneity one with cyclical principal isotropy group on a component of $M^H$, which has quasipositive curvature. Since $N(H)\cap K^-/H = \bb Z_2$, it must be covered by a cohomogeneity one manifold on which $\SU(2) \times \SU(2)$ acts with two $3$-dimensional singular isotropy groups. But this can only be a space form, which has $\vert W \vert \le 4$, a contradiction since $14 = \dim G/H \le 2(3+2) = 10$.\\
	Now we can assume that $H_0$ is three dimensional. Like in \cite{gwz} the only three dimensional spherical subgroups of $\SU(4)$ are $\SU(2)\subseteq \SU(3) \subseteq \SU(4)$ and $\Delta \SU(2) \subseteq \SU(2)\SU(2) \subseteq \SU(4)$. In the latter case the Block Theorem applies since it is effectively $\SO(6)/\SO(3)$. Therefore we assume $H_0$ to be the lower $2 \times 2$-block. By the Isotropy Lemma one of $K^\pm$ is equal to $\SU(3)$ or $\Sp(2)$. First assume $K^- = \SU(3)$ the cases $l_+ =1,7$ are handled as in \cite{gwz}. For $l_+ =3$ we have $K^+ = \SU(2)\SU(2)$. $-\Id \in K^+$ is central and not contained in $H$ and thus represents a Weyl group element. But then we get by linear primitivity $12 = \dim G/H \le 2 \cdot l_+ + l_- = 6+5 = 11$, a contradiction. Now consider $K^- = \Sp(2)$. Note that $-\Id \in \Sp(2)$ is not contained in $H$ and hence $B_-$ is totally geodesic. If $K^+ = \Sp(2)$, the action is not primitive, since $N(H)$ acts transitively on all embeddings of $\Sp(2)$ containing $H$. If $K^+ = \SU(2) \SU(2)$, then both singular isotropy groups contain the central element and hence $\vert W \vert = 2$ and therefore $12 \le 7+3 = 10$. $K^+ = \SU(3)$ was already handled and we are left with $l_+ = 1$. $K^+ = K_0^+ \subseteq S(\U(2)\U(2))$ and we can assume that up to conjugacy $K^+ = H_0 \cdot \diag(z^k,z^l,\overline z^{(k+l)/2},\overline z^{(k+l)/2})$. If $-\Id$ was in $K^+$ then both $B_\pm$ are totally geodesic of total dimension $5+11 = 16 \ge 13$ contradicting the Partial Frankel Lemma. This implies $(k,l) = (2p,2q)$ with $p$ and $q$ relatively prime and not both odd. By choosing $z = i$ and multiplying with $\diag(1,1,\pm(i,-i))$, either $\iota = \diag(1,-1,-1,1)$ or $\diag(-1,1,-1,1)$ is contained in $K^+$. If $H$ is connected, then $\iota$ is a Weyl group element. By primitivity we can assume, that $K^-$ is the standard $\Sp(2)$, since $N(H) = S(\U(2)\U(2))$. But $\iota$ normalizes $\Sp(2)$ in this case. Hence by linear primitivity $12 \le l_-+l_+ = 1+7 = 8$, a contradiction. Therefore $H$ is not connected. Since $\Sp(2)$ can only be extended by $\bb Z_2$, we have that $H/H_0 \cong \bb Z_2$ with $\iota$ representing the second component. Now we can argue as in \cite{gwz} to show, that this can only be obtained by the Bazaikin spaces. \\
	$G= \Spin(7)$: The case $H_0 = S^1$ can be ruled out as in \cite{gwz}. If $H$ is $3$ dimensional, we can assume, that $H_0$ is embedded as the normal subgroup of a $4 \times 4$-block, since the Block Theorem applies, if $H_0$ is embedded as a $3 \times 3$-block. Therefore $K_0^-$ is one of $\SU(3)$ or $\Sp(2)$. The case $K^-_0 = \SU(3)$ can be ruled out as in \cite{gwz}. If $K_0^- = \Sp(2)$, then it contains the central involution of $\Spin(7)$, which represents a Weyl group element, since $l_- = 7$. Hence $\vert W \vert \le 4$ and thus by the lower Weyl group bound, $18 \le 2\cdot l_- + l_+ = 14 + l_+$. Therefore $l_+ \ge 4$ and hence $l_+ = 7$, since the case $l_+ = 5$ implies $K^+ = \SU(3)$, which was ruled out before. But then $K^+ = \Sp(2)$ also contains the central element of $\Spin(7)$ and therefore $\vert W \vert = 2$ and hence $18 \le l_-+l_+ = 7+7 = 14$, a contradiction. This finishes the proof.
\end{proof}

\subsection{$G$ semisimple and not simple with rank one normal subgroups}

\begin{prop}
	Let $G$ be a semisimple compact Lie group of rank at least four, which contains a normal factor of rank one. Then \emph{$G=\SU(2)\SU(n)$}, $M=\bb S^{4n-1}$ and the action is the tensor product action.
\end{prop}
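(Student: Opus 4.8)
The plan is to reduce, up to finite cover, to $G = S^3 \times L$ with $L$ a compact semisimple Lie group of rank $\rk L \ge 3$, the distinguished $S^3$ being a rank one normal factor. Since we are past the equal rank case $H$ has corank two in $G$, so $\rk H_0 = \rk L - 1 \ge 2$. As $S^3 \times 1$ meets $H_0$ in a subgroup of rank at most one, a rank count forces $\proj_{S^3}(H_0)$ to be a circle; in particular $H_0 \not\subseteq 1\times L$ and, since the action is essential, $S^3 \not\subseteq H$ and (by Theorem \ref{intro_fp_hom} together with the reduction to essential actions) no normal factor of $G$ acts fixed point homogeneously or orbit equivalently.

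The first step is to locate the $S^3$-factor. Because $\proj_{S^3}(H_0)$ is a circle, $\proj_{S^3}(N(H_0))$ lies in the normalizer of a maximal torus of $S^3$, whose Lie algebra is one-dimensional, so it cannot span $\fr{su}(2)$. Since the Weyl group sits inside $N(H)\subseteq N(H_0)$, linear primitivity (Lemma \ref{obstructions_lem_primitivity}) forces one of $K^-, K^+$, say $K^-$, to satisfy $\proj_{S^3}(K^-_0) = S^3$. By the classification of homogeneous spheres (Table \ref{appendix_table_homsphere}), $K^-_0$ then has a normal $\SU(2)$-subgroup mapping isomorphically onto the $S^3$-factor (possibly diagonally into $S^3\times\phi(S^3)$ for some homomorphism $\phi\colon S^3\to L$), and we write $K^-_0 = \bar K^-\cdot\SU(2)$ with $\bar K^-$ commuting with this $\SU(2)$. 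Consequently the Weyl element $w_-$ can be represented inside this $\SU(2)$, so $w_-$ acts trivially on $L$ and normalizes $\proj_L(K^+)$.

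The second step identifies $L$. Since the $\SU(2)$-factor of $K^-_0$ already accounts for the $S^3$-direction, essentiality forces $\proj_L(H_0)$ to have corank exactly two in $L$ as well, and linear primitivity forces $\proj_L(K^-)$ and $\proj_L(K^+)$ to generate $L$; in particular $\proj_L(K^-_0)$ is a corank $\le 2$ subgroup of $L$ arising as the isotropy group of a homogeneous sphere, and by the Isotropy Lemma (Lemma \ref{obstructons_lem_isotropy}) the isotropy representation of $L/\proj_L(H_0)$ is spherical. Running through the maximal-rank and submaximal-rank subgroups of the simple factors of $L$ and intersecting with the list of spherical subgroups (Table \ref{appendix_table_spherical}) leaves, after the dimension estimates below, only $L = \SU(n)$ with $\proj_L(K^-_0)$ the block $S^1\cdot\SU(n-2)$ (equivalently $H$ contains the lower $(n-2)\times(n-2)$ block of $\SU(n)$), plus a handful of corank-one candidates with $\rk L = 3$ to be excluded. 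In the block case, for $n \ge 5$ the Block Theorem (Theorem \ref{obstructions_thm_block}), applied with $L$ in the role of $G_d$ and $u = 2$, shows $M$ is equivariantly diffeomorphic to a rank one symmetric space with a linear action; comparing dimensions and isotropy groups with the classification of rank one symmetric spaces, the only linear $S^3\times\SU(n)$-action with a singular isotropy group of the required type is the tensor product action on $\bb S^{4n-1} = S(\bb C^2\otimes\bb C^n)$, with $H = \Delta S^1\cdot\SU(n-2)$, $K^- = \SU(2)\cdot\SU(n-2)$ ($l_- = 2$) and $K^+ = \U(n-1)$ ($l_+ = 2n-3$).

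It then remains to dispose of the cases not covered by the Block Theorem: $n = 4$ (i.e. $\rk G = 4$), the low-rank candidates where $\proj_L(K^-_0)$ has corank one (such as $L = \Spin(k)$ with an $\SU$- or $\Sp$-block sitting inside), and any $L$ with an exceptional simple factor or with extra small simple factors. The candidates with $\rk L$ large are killed by a dimension count: with $H$ of corank two the Weyl group has order at most eight (Proposition \ref{prop_weylgroup_upperweyl}), often at most four, so linear primitivity gives $\dim G/H \le \tfrac{|W|}{2}(l_- + l_+)\le 4(l_- + l_+)$ with $l_\pm$ bounded by the dimension of the relevant homogeneous sphere, contradicting $\dim G/H = 3 + \dim L - \dim H$. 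For $n = 4$ one argues directly with the diagram $H = \Delta S^1\cdot\SU(2)$, $K^- = \SU(2)\cdot\SU(2)$, $K^+ = \U(3)$ (and its admissible coverings), using primitivity and the upper Weyl group bound to force $M = \bb S^{15}$, much as in the identification of the Bazaikin spaces. At the points where a central involution of a factor of $G$ lies in a singular isotropy group but not in $H$ — making the corresponding singular orbit totally geodesic — one substitutes Lemma \ref{obstructions_lem_totgeod} or the Partial Frankel Lemma (Lemma \ref{obstruction_lem_partial}) for the positive-curvature arguments of \cite{gwz}. The main obstacle is precisely this last, case-by-case part: the bookkeeping of which corank $\le 1$ spherical subgroups of $L$ can occur together with the compatible placement of the $\SU(2)$-factor, and the by-hand treatment of $n = 4$; by contrast Steps one and two are routine once the Isotropy Lemma, linear primitivity and the Block Theorem are available.
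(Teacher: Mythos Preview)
Your first step contains a genuine error that undermines the whole reduction. The rank count does \emph{not} force $\proj_{S^3}(H_0)$ to be a circle: from $\rk H_0 = \rk L - 1$ and $H_0 \cap (S^3\times 1)$ having rank at most one you cannot conclude anything about the projection, since $H_0 \subset 1\times L$ with $\rk H_0 = \rk L - 1$ is perfectly consistent with all the hypotheses. In fact this case occurs. The paper's proof shows only that one of the \emph{singular} isotropy groups, say $K^-$, projects onto $S^3$ (using primitivity and the structure of the slice representations), and then splits into $l_- = 2$, where $H_0 = \Delta S^1 \cdot K^-_L$ does project to a circle, and $l_- = 3$, where $H_0 = K^-_L \subset L$ projects trivially to $S^3$. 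Your argument misses the $l_- = 3$ case entirely, and that case occupies the bulk of the paper's proof.

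Your second step also has a rank error: in both cases above $\proj_L(H_0)$ has corank \emph{one} in $L$, not two, so your framing of which spherical subgroups to search for is off. More seriously, the route you sketch to the Block Theorem is not available in general: the paper reaches the Block Theorem only in specific subcases (e.g.\ $(L,K^-_L) = (\SO(k),\SO(n))$ or $(\Sp(k),\Sp(n))$), and most configurations are eliminated instead by the lower Weyl group bound, by finding a central involution in $K^\pm\setminus H$ (making $B_\pm$ totally geodesic and invoking Lemma \ref{obstructions_lem_totgeod}), or by passing to a fixed-point component $M^\iota$ and appealing to already-excluded lower-rank cases. Your proposal acknowledges this ``case-by-case part'' as the main obstacle but does not carry it out; since your organizing principle (the rank of $\proj_{S^3}(H_0)$) is incorrect, the bookkeeping you would need is essentially the paper's full case analysis, organized instead by the value of $l_-$ and the type of the homogeneous sphere $K^+_L/K^-_L$ (respectively $K'/H'$).
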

\begin{proof}
	By allowing a finite kernel $G = S^3 \times L$ with $L$ being a compact Lie group of rank at least three. First we show, that at least one of $K^\pm$ projects surjectively onto the $S^3$-factor: Suppose both singular isotropy groups do not project onto $S^3$. By primitivity both must project onto some $S^1_\pm \subseteq S^3$. Hence $K^\pm = S^1_\pm \bar K^\pm$, where $\bar K^\pm \subseteq L$. In both cases $S^1_\pm$ cannot act trivially on the normal sphere $\bb S^{l_\pm}$, since otherwise one of $S^1_\pm$ would be contained in the other singular orbit, contradicting, that the action is primitive and essential. If both $l_\pm$ are equal to $1$, then $H_- =H = H_+$, which must be finite by primitivity. But this contradicts $\rk G \ge 4$. Therefore we can assume $l_- \ge 3$, which implies, that $H$ projects onto the $S^1$-factor of $K^-$, by the classification of homogenous spheres. By primitivity, this implies that $K^+$ projects onto $S^3$, contradicting the assumption.\\
	We can now assume, that $K^-$ projects onto $S^3$. Since the action is essential $K_0^- = \SU(2) \cdot K_L^-$, with $K_L^\pm = K_0^\pm\cap L$. We also write $H_0 = (H_{S^3} \cdot H_\Delta \cdot H_L)_0$ and $K^+_0 = (K_{S^3}^+ \cdot K^+_\Delta \cdot K^+_L)_0$. Note that $H$ cannot project onto the $S^3$-factor since the action is essential. By the classification of homogeneous spheres this implies $l_- = 2,3$ and therefore $K^+$ is connected. If $l_+ = 1$, then $H_L$ will be in the kernel of the isotropy action of $\bb S^{l_+}$. By Lemma \ref{lem_weylgroup_ineffkern} $H_L$ must be finite and thus $H$ has rank $0$ or $1$, which contradicts, that $L$ has rank at least $3$. Therefore both of $l_\pm \ge 2$ and all groups are connected. Furthermore note, that $\bb S^{l_+} = K_L^+/H_L$ almost effectively and we denote by $K^\prime$ the simple normal subgroup of $K^+_L$, that acts transitively on $\bb S^{l_+}$, and $H^\prime = H\cap K^\prime$. By the Isotropy Lemma, all representations of $H^\prime$ are contained in $K^\prime$. Note that $K^\prime$ has corank  at most $2$ in $L$ if $l_+$ is even and corank at most $3$ if $l_+$ is odd.\\
	We claim, that $K^\prime$ is contained in a simple factor of $L$, if $\rk K^\prime \ge 2$. First we assume $l_-=3$ and $\bb S^{l_-} = \SU(2)$. In this case $H$ is contained in $L$ with corank $1$. $K^\prime \subset L$ has at most corank $1$. If $K^\prime$ is not contained in a simple factor, then $L = L_1 \times L_2$ and $K^\prime$ projects non trivially to both factors. But then $K^\prime$ has corank at least $2$, since $\rk K^\prime \ge 2$. Now assume $l_- = 2,3$ and $\bb S^{l_-} = \U(2)/S^1$, if $l_- =3$, then $H = \Delta S^1 \cdot H_L$, $K^+ = S^1 \cdot K_L$ and $K^\prime$ has corank at most $2$ in $L$. If $\rk K^\prime \ge 3$, then we get a contradiction as before. Therefore assume $\rk K^\prime = 2$. The corank of $K^\prime$ in $L$ is at most $2$. If $K^\prime$ is not contained in a simple factor, then $L = L_1\times L_2$ with $L_i$ of rank $2$ and $K^\prime$ projects non trivially to both of them and therefore the corank of $K^\prime$ in $L$ is exactly $2$. Thus $K^\prime$ has to commute with an $S^1$ in $L$, but this contradicts $K^\prime$ being simple.\\
	Now we claim $H_{S^3} = H\cap S^3$ is central. If not, then let $a \in H_{S^3}$. $N(a) = S^1 \times L$ acts with cohomogeneity one on a component $M^\prime$ of $M^a$, which has codimension $2$.
	Since $H$ is connected, it projects to $S^1 \subset S^3$. Since $L$ is semisimple, it acts orbit equivalent and essentially on $M^\prime$. If $l_- = 2$, then $M^\prime$ is covered by an action where both singular isotropy groups are isomorphic, which can only be the exceptional action of $\Spin(8)$ on $\bb S^{15}$ that cannot be extended by an $S^1$ action. 
	If $l_- =3$, then one of the singular orbits has codimension $2$, which cannot happen by induction, since $L$ is semisimple. Note that we are able to use Table \ref{appendix_table_cohom_one_odd_sphere} here. The proofs in the simple cases refer to this proposition, but only for subgroups of the form $S^3 \times L^\prime$, where $L^\prime$ has strictly lower rank, than the original group. Hence the induction argument does not break.\\
	Therefore we can assume, that $H_{S^3}$ is central. If $S^3$ acts effectively free, then we can apply the proof of Proposition 9.1 in \cite{gwz}. Therefore we assume, that $S^3$ acts not effectively free. If one of $K^\pm$, contains a central element, that $H$ does not contain, then the corresponding singular orbit is totally geodesic and we can apply Lemma \ref{obstructions_lem_totgeod}. If $K^-$ contains an element  $a \in S^3$, then $N(a) /N(a) \cap K^- = B_-$ is totally geodesic. If $K^+$ contains $a \in S^3$, then $N(a) /N(a)\cap K^+ = S^1 \times L/K^+$  has codimension $2$ in $B_+$. By the same argument as in the proof of Lemma \ref{obstructions_lem_totgeod}, we have $\dim G/H \le 2\cdot l_+ + l_- +4 \le 2\cdot l_++7$.\\
	We first assume, that $\rk K^\prime \ge 2$. By the previous discussion, this implies, that $K^\prime$ is contained in a simple factor $S$ of $L$. We are now looking for pairs of simple groups $(S,H^\prime)$, with $\rk S - \rk H \le 3$, such that $H^\prime$ is spherical and the isotropy representation $S/H$ has at most one non trivial representation, except if $H^\prime = \Spin(7)$, where two non trivial representations are allowed. Furthermore $S$ contains $K^\prime$ such that $K^\prime/H^\prime = \bb S^{l_+}$. We will not look at the cases, where $H^\prime$ is a $k \times k$ block in a classical Lie group, since then the Block Theorem applies.\\
	If $l_+$ is even, we have the following pairs by Table \ref{appendix_table_spherical}: $(\Spin(n), \SU(3))$ for $n = 7, 8, 9$, $(\Spin(8), \SU(4))$, $(G_2, \SU(3))$ and $(F_4,\SU(3))$. In the last case $\vert W \vert \le 8$ and by linear primitivity $\dim L \le 6 + 4 \cdot (3+6) = 42$, a contradiction. $(G_2,\SU(3))$ is not essential, since the $\SU(3)$ representations have to degenerate in $G_2/\SU(3)$. In the second case the Block Theorem applies, since there is an outer automorphism of $\Spin(8)$, such that $\SU(4)$ is mapped to the $6 \times 6$ block. Hence we are left with the first case. By the previous discussion $\dim G/H \le 2\cdot l_+ + l_- +4 \le 2\cdot l_++7$ and hence $\dim L \le 2 \cdot 6 +7 +6 = 25$, which only leaves $S = \Spin(7)$. By primitivity $\Delta \SU(2)\subset K^-$ projects to $\Spin(7)$ and hence $\Spin(7)$ must contain $\SU(2) \cdot \SU(3)$ as a subgroup, which is not the case.\\
	Now assume $l_+$ is odd. Then $H^\prime$ has corank at most $3$ in $S$ and we have the following pairs: For $\rk H^\prime = 1$, we have: $(\SU(n), \SU(2))$ as a $2 \times 2$ block with $n = 3,4,5$, $(\Spin(n), \Sp(1))$ as a factor of a $4 \times 4$ block for $n=5,6,7,8,9$, $(\Sp(n), \Sp(1))$ as a $1 \times 1$ block, with $n = 2,3,4$, $(G_2,\SU(2))$, where $ \SU(2) \subset \SU(3) \subset G_2$ as a $2 \times 2$ block. All other exceptional cases are ruled out by linear primitivity: $\dim L \le 4 \cdot (l_-+l_+) + \dim H -3 \le 44$. Furthermore we have $\dim H \le 7$ and $l_+ \le 7$. By the dimensional bound on $G/H$, this means $\dim L \le \dim H -3 + 2 \cdot 7 + 7\le 25$. In the first case we still have $n = 3,4,5$. $n = 3$ is not essential. Hence $n = 4$. If $l _+ = 7$, then $K^\prime$ contains $\Sp(2)$. Since $H^\prime$ is the lower $2 \times 2$ block, $K^-$ can only project to $\SU(2) \times \SU(2) \subset \Sp(2)$. Hence $w_\pm K^\mp w_\pm = K^\mp$ and therefore $\dim L \le 7+3+2 = 12$, a contradiction. If $n = 5$ then $l_+ = 7$ by the dimensional bound and $\Sp(2) \subset \SU(4) \subset \SU(5) = L$ and $K^+$ projects to $S^1 \cdot \Sp(1) \cdot \Sp(2)$, but this is not contained in $\SU(5)$. We are left with $n=4$. If $l_- = 7$, then by primitivity $K^+$ projects to $S^1 \cdot \Sp(2) \subset \SU(4)$, but $\SU(4)$ does not contain this group. Thus $l_+ = 5$ and $K^\prime = \SU(3) \subset \SU(4)$ as a $3 \times 3$ block. If $l_-=3$, then it is easy to see, that $w_-K^+w_- = K^+$ and therefore $\dim L \le 2+ l_++2\cdot l_- = 13$, a contradiction. Therefore let $l_- =2$. If there is an $S^3$-factor in $L$, then $K^-$ also projects to this and $H$ does not because the action is essential. Therefore $\dim H \le 4$. But then the dimensional bound gives $\dim L \le 1 + 2 \cdot 5 + 2 + 4 = 17$ a contradiction. This leaves $G = \SU(2) \cdot \SU(4)$ with the tensor product action on $\bb S^{15}$, which contradicts the assumption, that $\SU(2)$ acts not effectively free. In the second case $n = 5,6,7$. Since $\Spin(6) = \SU(4)$ was already considered, we are left with $n = 5,7$. If $l_+=7$, then $n =5$ is not essential and therefore $n =7$. But then $K^\prime$ contains $\Sp(2)$, which contains the central element of $\Spin(7)$, which is not contained in $H^\prime $ and thus $B_+$ is totally geodesic. By Lemma \ref{obstructions_lem_totgeod} $\dim L \le 2 + 2 \cdot 7 +3 = 19$, a contradiction. Therefore we have $l _+ \le 5$ and thus $\dim L \le 19$, which leaves $n = 5$. But $\Spin(5)$ contains no $\SU(3)$. In the third case $n = 2,3$ and since $\Spin(5) = \Sp(2)$, was already ruled out, we are left with $n = 3$. If $l_+=5$, then $\dim L\le 19$, hence $l_+ =7$ and $K^\prime = \Sp(2)$ as a $2 \times 2$ block. If $H$ contains two $3$ dimensional factors, then it has to project surjectively to each $\Sp(1)$-factor of the diagonal in $\Sp(3)$ and cannot commute with another $\SU(2)$. Therefore $\dim H \le 5$ and since $\dim L \le 23$ it cannot contain an $S^3$-factor. But then $N(\Sp(1))/\Sp(1) = S^3 \cdot \Sp(2)$ acts with one dimensional principal isotropy group, which was already ruled out. If $(S,H^\prime) = (G_2, \SU(2))$, then $l_+ =5$, since there is no $\Sp(2)$ in $G_2$. Then $L = S^3 \times G_2$ by the dimensional bound and thus $N(\SU(2))/\SU(2) = S^3 \times S^3 \times \SO(3)$ acts with one dimensional principal isotropy group, but this does not exist.\\
	For $\rk H^\prime \ge 2$, we have: $(\Spin(n), \SU(3))$ with $n = 6,7,8,9,10,11$, $(\SU(4), \Sp(2))$, $(\Spin(8), \Sp(2))$, $(\Spin(n), G_2)$ for $n = 7,8,9,10$, $(\Spin(n), \Spin(7))$ for $n = 9,10,11,12,$ $13$, $(S,H^\prime )$ for $S = F_4,E_6$ and $H^\prime = \SU(3), G_2, \Spin(7)$. In the last case, if $H^ \prime = \SU(3), G_2$, $\dim H \le 15$ and $l_+ \le 7$. Hence $\dim L \le \dim H -3 + 2 \cdot l_+ + 7 = 33$. If $H^\prime = \Spin(7)$, then $\dim L \le 56$ and hence $S = F_4$, but $F_4$ does not contain $\SU(2) \cdot \Spin(7)$ as a subgroup. In the first case $\dim H \le 10$ and $l_+=7$. Hence $\dim L \le 7+2 \cdot 7 + 7 = 28$ and therefore $n = 6,7,8$, but none of these groups contains $\SU(2) \cdot \SU(3)$, which is necessary by primitivity. The second and third case are ruled out by the Block Theorem since $\SU(4) = \Spin(6)$ and $\Sp(2) = \Spin(5)$. In the fourth case $l_+ = 7$ and $\dim H \le 15$. Hence $\dim L \le 33$, which leaves $n =7,8$, but the first one is not essential and the second one does not contain $\SU(2)\cdot G_2$ as a subgroup. In the fifth case $\dim H \le 22$ and $l_+ =15$, which implies $\dim L \le 56$ and hence $n = 9,10,11$. $n = 9$ is not essential and $\Spin(10)$ contains no $\SU(2) \cdot \Spin(7)$, which leaves $L = \Spin(11)$. But then $\Delta S^1 \subset H$ projects to the upper $3 \times 3$ block and hence acts nontrivially on the $8$-dimensional representations of $\Spin(7)$. But this action cannot degenerate in $\bb S^{l_+} = \Spin(9)/\Spin(7)$.\\
	We are now left with the cases, where $K^+$ consists of rank one groups only. In this case $l_+ = 2,3,4$ and $H$ has dimension at most $4$. This implies $\dim L \le \dim H -3 + 2 \cdot l_+ + 7 \le 16$. Furthermore $\rk L = 3,4$. The possible rank $3$ groups with this dimensional bound are $(S^3)^3$, $S^3 \times \SU(3)$, $S^3 \times \Sp(2)$, $\SU(4)$. The ones with rank $4$ are $(S^3)^4$, $(S^3)^2\times \SU(3)$, $(S^3)^2\times \Sp(2)$, $\SU(3)\times \SU(3)$. If $L$ consists of $S^3$-factors only, then $H$ has no $3$ dimensional factors and one of $l_\pm$, say $l_-$, equals $3$. Hence the central element of the $S^3$-factor of $K^-$ is central in $G$ and acts as the antipodal map on $\bb S^{l_-}$. Therefore $B_-$ is totally geodesic and by Lemma \ref{obstructions_lem_totgeod}, we have $\dim G \le 2+2\cdot l_-+l_+ = 11$, a contradiction. Let $L$ contain $S^3 \times \SU(3)$ as a normal subgroup. If $l_+ = 4$, then $\SU(2)\cdot\SU(2) \subset K^+$ is contained in $\SU(3)$, since the action is essential. But $\SU(3)$ does not contain such a subgroup. Hence $l_+ \le 3$ and the dimensional bound implies $\dim L \le -1+2 \cdot 3+ 7 =12$, which implies $L=S^3 \times \SU(3)$ and $H$ has dimension $2$. Now we assume $l_- = 2$. Then $l_+=3$ and $H=\Delta S^1\cdot S^1$ and $K^+= \Delta S^1 \cdot S^1 \cdot \SU(2)$. $K^+$ projects to $\U(2)$ in $\SU(3)$, since otherwise $K^+$ contains a central element, which is not contained in $H$, forcing $B_+$ to be totally geodesic, a contradiction since $11 = \dim L \le -1+2\cdot 3 +2 = 7$.  By primitivity $K^-$ projects to an $\U(2)$ as well. Let $\iota\in \Delta S^1$, be the involution contained in $\Delta S^1 \subset H$. Then $N(\iota) = S^3 \times S^3 \times \U(2)$. Hence $S^3 \times S^3 \times S^3$ acts orbit equivalent with one dimensional principal isotropy group, a contradiction. If both are $l_\pm = 3$, then the $S^3$-factors of $K^\pm$ both project nontrivially to $\SU(3)$ by primitivity and thus cannot project surjectively to both $S^3$-factors of $G$. The $S^1$-factors of $H$ can be chosen to commute with an $S^3$-factor of $K^-$ or $K^+$. Let $S^1 = H_L$ be the $S^1$-factor contained in $L$. Then $S^1 = (1,z^n,\diag(z^m,z^l,z^k))$. By the Isotropy Lemma, $H_L$ can only act with one non trivial weight. Hence $n =0$ and $m = k =1$ or $m=k = 2$ and $n =3$. In the first case $N(S^1)/S^1 = S^3 \times S^3 \times S^3$ acts with $1$-dimensional principal isotropy group, a contradiction. In the second case the involution of $S^3 \subset K^+$ is not contained in $H$, as well as for $K^-$. Furthermore $K^\pm$ both project to a subgroup isomorphic to $\U(2)$, since the involutions in the $S^3$-factors correspond to the Weyl group elements $w_\pm$, it is easy to see that $w_\pm \proj_{\SU(3)}(K^\mp)w\pm = \proj_{\SU(3)}(K^\mp)$. Hence by linear primitivity $8=\dim \SU(3) \le \dim(\U(2)) + \dim(\U(2))-2 = 6$, a contradiction. Let $L$ contain $S^3 \times \Sp(2)$ as a normal factor. If $l_+ = 4$, then $H_L \subset \Sp(2)$ and we can apply the Block Theorem, since we have $\SO(5)/\SO(3)$ effectively. Therefore $l_+\le 3$ and $H$ has no $3$-dimensional factor. Thus $\dim L \le -1 + 2 \cdot 3 +7 = 12$, a contradiction. If $L= \SU(4)$, then by the same arguments as before the Block Theorem applies for $l_+=4$ and thus $\dim L \le 12$, a contradiction. We are left with the case $L = \SU(3) \times \SU(3)$. Let $l_+ = 4$. If $S^3= H_L$ projects to some $2 \times 2$-block, then it has a $4$-dimensional representation, which contradicts the Isotropy Lemma. Hence $H_L = \SO(3)$. But this must have a $5$ dimensional representation in one of the factors and is therefore not spherical. For $l_+ \le 3$, we again have $\dim L \le 12$, which is a contradiction. This finishes the proof.
\end{proof}

\subsection{$G$ semisimple and not simple without rank one normal subgroups}

	Let $G$ be a semisimple and not simple subgroup without normal factors of rank one, which acts by cohomogeneity one on a quasipositively curved manifold $M$. 
\begin{prop}
	If the action by $G$ on $M$ is essential and the principal isotropy group has corank $2$, then \emph{$G = \Sp(2) \times \Sp(n)$} and the action is the tensor product action on $\bb S^{8n-1}$.
\end{prop}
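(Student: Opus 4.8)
The plan is to follow the proof of the corresponding statement in \cite{gwz}, substituting the quasipositive-curvature tools established above for the ones that use positive curvature everywhere. Write $G = G_1 \times \dots \times G_s$ with each $G_i$ simple; by hypothesis $s \ge 2$ and every $\rk G_i \ge 2$, so $\rk G \ge 4$ and $\rk H = \rk G - 2$. For a homogeneous sphere $K/H = \bb S^l$ one has $\rk K - \rk H = 0$ if $l$ is even and $=1$ if $l$ is odd (Table \ref{appendix_table_homsphere}); since $M$ is odd-dimensional and $\rk H = \rk G - 2$, the Rank Lemma (Lemma \ref{obstructions_lem_rank}) forces at least one of $l_\pm$ to be odd, and after switching the two singular orbits we may assume $l_-$ is odd, so that $K^-$ has corank exactly one in $G$. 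Thus $K^-_0$ is of full rank in all but at most one of the factors, and the classification of homogeneous spheres singles out a (possibly diagonally embedded) normal subgroup of $K^-_0$ acting transitively on $\bb S^{l_-}$, while $H$ projects onto the complementary part; this is the same combinatorial set-up as in \cite{gwz}.

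The second ingredient is the Block Theorem (Theorem \ref{obstructions_thm_block}): as soon as $G$ has a classical factor $G_i$ and, up to conjugacy, $H$ contains a lower $k\times k$-block in $G_i$ with $k \ge 3$ — respectively $k \ge 2$ when $G_i$ is symplectic — the conclusion holds, so from now on all block sizes occurring inside $H$ are small. Together with the Isotropy Lemma (Lemma \ref{obstructons_lem_isotropy}) — $H_0$ acts spherically on each irreducible summand of the isotropy representation, in particular on the summands of $\fr g_i$ that it moves — and the tables of spherical subgroups of simple groups (Table \ref{appendix_table_spherical}), this cuts the list of projections $\proj_{G_i}(H)$, hence of the factors $G_i$ themselves, down to a short one: the exceptional factors are eliminated by dimension counting against the Weyl group bounds, and the classical ones are bounded. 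I then expect the bulk of the argument to be the reduction to $s = 2$: if $s \ge 3$ then, since $\sum_i \bigl(\rk G_i - \rk \proj_{G_i}(H)\bigr) \le \rk G - \rk H = 2$, at least $s - 2$ of the factors are already of full rank inside $H$, and primitivity (Lemma \ref{obstructions_lem_primitivity}(c)) together with essentiality — no normal subaction orbit equivalent, no subaction fixed point homogeneous — leads to a contradiction exactly as in the simple and semisimple-with-rank-one-factor cases treated above.

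With $s = 2$, write $G = G_1 \times G_2$ and run through the finitely many pairs of simple groups of rank $\ge 2$ that survive the block and spherical restrictions. For each, the admissible pairs $(l_-, l_+)$, and hence $K^\pm$ and $H$ up to conjugacy, can be listed, and the bad diagrams are discarded by the usual obstructions: the lower and upper Weyl group bounds (Lemma \ref{obstructions_lem_primitivity}(b), Proposition \ref{prop_weylgroup_upperweyl}) against linear primitivity $\fr g = \sum_{w \in W}\bigl(w \fr k^- w + w \fr k^+ w\bigr)$; the observation that a singular orbit is totally geodesic whenever a Weyl-group element is represented by a central element of $G$, or more generally whenever the relevant isotropy group meets a factor in $-\Id$, so that Lemma \ref{obstructions_lem_totgeod} or the Partial Frankel Lemma (Lemma \ref{obstruction_lem_partial}) applies; and the Core-Weyl Lemma (Lemma \ref{lem_weyl_coreweyl}) applied to $N(H)_c/H$ on the core, which kills every configuration in which the reduced action would be a cohomogeneity one action with finite principal isotropy group and an over-large singular isotropy group. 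The only surviving possibility is $G = \Sp(2) \times \Sp(n)$ with $H = \Delta(\Sp(1)\Sp(1)) \cdot \Sp(n-2)$, $K^- = \Sp(1)\Sp(1) \cdot \Sp(n-1)$ and $K^+ = \Delta \Sp(2) \cdot \Sp(n-2)$, which is precisely the group diagram of the tensor product action on $\bb S^{8n-1}$; by the equivalence of group diagrams (Section \ref{seccohom1}) — or, for $n \ge 4$, by the Block Theorem once one observes that $H$ contains an $\Sp(n-2)$-block, the cases $n = 2,3$ being handled by hand — $M$ is equivariantly diffeomorphic to $\bb S^{8n-1}$ with this action. The genuinely new inputs compared with \cite{gwz} are the quasipositive versions of the Block Theorem, the Core-Weyl Lemma and Frankel's theorem; once these are in place the positively curved argument carries over essentially verbatim, so that the main difficulty is organizing the $s = 2$ bookkeeping rather than any single step.
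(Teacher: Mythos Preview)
Your outline has the right toolkit but misses the structural hinge of the paper's argument. The paper does not reduce to two simple factors by counting ranks of projections; your inequality $\sum_i(\rk G_i - \rk\proj_{G_i}H)\le 2$ only tells you that most projections have full rank, and full-rank projection is far from having a normal factor of $G$ inside $H$ or $K^\pm$, so primitivity/essentiality do not bite as you suggest. Instead the paper writes $G=L_1\times L_2$ (an arbitrary splitting), sets $H_i=H\cap L_i$, and observes: if some $H_i$ is non-finite in the effective picture, then by Lemma \ref{lem_weylgroup_ineffkern} one of the $K_i^\pm$ already acts transitively on the corresponding normal sphere, and from that point on the proof of \cite{gwz} Proposition 10.1 goes through verbatim. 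The genuinely new case is when \emph{both} $H_i$ are finite; then $H_0=H_\Delta$ projects with finite kernel to each $L_i$, forcing $\rk L_1=\rk L_2=2$, hence (no rank-one factors) both $L_i$ simple in $\{\SU(3),\Sp(2),G_2\}$.

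The remaining work is entirely in this rank $2\times 2$ diagonal regime, and it is more delicate than ``run through the pairs'': one first argues that if neither $K^-_i$ has full rank then all of $K_i^\pm,K_\Delta^\pm$ are rank one and $H_0=T^2$, which via Weyl group bounds pins down $(l_-,l_+)=(3,2)$ with $K^+$ projecting to $\U(2)$ in both factors. The individual products $\SU(3)\times\SU(3)$, $\SU(3)\times G_2$, $\Sp(2)\times\SU(3)$ are then eliminated one by one --- the first two by locating a central element of $G$ in $K^-\setminus H$ (so $B_-$ is totally geodesic and Lemma \ref{obstructions_lem_totgeod} gives $14\le 8$), the third by passing to the centralizer of the involution in $\Delta\SU(2)$ and landing in a rank-three configuration already excluded. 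None of your named obstructions is wrong, but you need to say \emph{which} case each one kills; as written, your $s=2$ bookkeeping is a promise rather than an argument.
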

\begin{proof}
	Let $G = L_1 \times L_2$ such that $\rk L_i \ge 2$ for $i = 1,2$. We fix the notation $K^\pm_0 = (K_1^\pm \cdot K_\Delta^\pm \cdot K_2 ^\pm)_0$ and $H_0 = (H_1 \cdot H_\Delta \cdot H_2)_0$. If we can assume that one of $K^\pm_i$ acts transitively on the normal sphere, we can apply the proof of \cite{gwz} Proposition 10.1. If one of the $H_i$ is non finite after the action is made effective, then one of $K_i^\pm$ must act transitively on $\bb S^{l_\pm}$. Otherwise they both either act effectively free or trivially, which implies $H_i$ would be a subset of $H_-\cap H_+$, which is finite. So we assume that both $H_i$ are finite. Then $H_i \subseteq H_- \cap H_+$ and thus $H_\Delta$ has corank $2$. The projections of $H$ to the $L_i$ have at most finite kernel. Hence $\rk L_i = 2$. Since $G$ has no normal factors of rank one, both $L_i$ must be simple. Therefore $L_i \in \lbrace G_2, \SU(3), \Sp(2)\rbrace$.  One of $l_\pm$, say $l_-$ is odd and therefore $K^-$ has corank one. If one of $K_i^-$ has full rank, then $K^-_0 = K_1^- \times K_2^-$ and one of $K^-_i$ acts transitively on $\bb S^{l_-}$. Therefore assume, that all of $K^\pm_i$ and $K^\pm_\Delta$ have rank one. Then also all simple factors of $H$ have rank one and $\bb S^{l_-}$ is one of $S^1$, $\SO(4)/ \SO(3)$, $\U(2) / \U(1)$, $\SU(2)$. If one of $K^-_i$ is three dimensional or if $K^-$ is abelian, then one of $K^-_i$ acts transitively. Hence $K^-_\Delta \cong \SU(2)$, $K_i^- \cong S^1$ and $H_0 = T^2$. If $l_+ = 1$, then $K^+$ is abelian and one of the factors $K_i^+$ acts transitively. If $l_+= 3$, then $\vert W \vert \le 4$ and hence $\dim G - 2 \le 2 \cdot (l_-+l_+) \le 2 \cdot(3+3) = 12$. But this is a contradiction since $\dim L_i \ge 8$. Therefore we are left with the case $l_+ = 2$ and all groups are connected. $K^+_\Delta$ has corank $2$ and contains an $\SU(2)$-factor, such that one $S^1$ subgroup of $H$ commutes with $\SU(2)$. Therefore $K^+$ projects to $\U(2)$ in both factors. By the lower Weyl group bound we have $\dim G \le 4 \cdot (3 + 2) +2 = 22$. This rules out the cases $G = G_2 \times G_2$, $G_2 \times \Sp(2)$. We start with the case $G = \SU(3) \times \SU(3)$. Let $K^\prime$ denote the $S^1$-factor of $K^+$, which is contained in $H$. $K^\prime = ( \diag(z^p,z^q,z^r),\diag(z^n,z^m,z^l))$, such that $n+m+l=0 = p+q+r$. $S^1$ acts with weights $n-m$, $m -l$, $n-l$ and $p-q$, $q-r$, $p-r$. By the Isotropy Lemma and since $K^+$ projects to $\U(2)$, we can assume that $K^\prime = (\diag(z,z,\bar z^2), \diag(z,z,\bar z^2))$ and that $\Delta\SU(2) = \lbrace (\diag(A,1), \diag(A,1))\vert \, A\in \SU(2) \rbrace$. Since both $K_1^-, K_2^-$ commute with $K^-_\Delta = \SU(2)$ both are one of $\diag(z, \bar z^2,z)$ or $\diag(\bar z^2,z,z)$. Therefore $K^-$ contains elements of the center of $G$, which does not intersect $H$. Hence $B_-$ is totally geodesic and therefore by Lemma \ref{obstructions_lem_totgeod} $14 = \dim G/H \le 2\cdot 3+2 = 8$, a contradiction. The case $\SU(3) \times G_2$ is ruled out similarly.\\
	Let $G = \Sp(2) \times \SU(3)$. Again $K^+$ projects to $\U(2)$ in $\SU(3)$ and to $\Sp(1) \times S^1$ or $\U(2)$ in $\Sp(2)$. Since $H$ contains the involution $\iota$ in $\Delta \SU(2)$, we have that $N(\iota)_0 = \Sp(1) \times \Sp(1) \times \U(2)$ or $\Sp(2) \times \U(2)$ acts with two dimensional principal isotropy group on a component of $M^\iota$. Since the action is orbit equivalent to the action of $\Sp(1) \times \Sp(1) \times \SU(2)$ or $\Sp(2) \times \SU(2)$ with one dimensional principal isotropy group this is a contradiction.\\
	Therefore we can assume that $K_1^-$ acts transitively on $\bb S^{l_-}$. The remaining proof is the same as in \cite{gwz} Proposition 10.1.
\end{proof}

\subsection{$G$ simple of rank at least $4$}

	We will finish the classification with the case, where $G$ is simple and $\rk G \ge 4$.
\begin{prop}
	If $G$ is a simple Lie group of rank at least $4$, that acts essentially with cohomogeneity one on a quasipositively curved manifold $M$, then either \emph{$G=\Spin(k)$} for $k = 8$ or $10$ and the action is given by one of the exceptional linear actions of \emph{$\Spin (8)$} on $\bb S^{11}$ and \emph{$\Spin(10)$} on $\bb S^{31}$, or \emph{$G=\SU(5)$} and the action is given by the linear action of \emph{$\SU(5)$} on $\bb S^{19}$.
\end{prop}
\begin{proof}
	If $G = \Sp(k)$ for $k \ge 4$ or is an exceptional Lie group, we can apply the same proofs as in \cite{gwz} Proposition 11.1 and Proposition 11.4.
	\begin{center}
		$G=\Spin(k)$ for $k \ge 8$
	\end{center}
	If $k=8, 10$ we can apply the proof of the analogue statement for positive curvature of Proposition 11.3 in \cite{gwz}. Therefore we first assume $k = 9$:
	In this case let $T^2$ be a maximal torus of $H_0$ contained in $S^1\cdot \SU(4) \subseteq \Spin(8)$ and let $\iota \in T^2\cap \SU(4)$ be an involution. Then $N(\iota)_0 = \Spin(8)$ or $\Spin(5)\cdot \Spin(4)$. If $N(\iota)_0 = \Spin(8)$, then $M^\iota_c$ has even codimension at most $8$ and $\dim M \ge 22$, since $\dim H \le 14$. Therefore $M^\iota_c$ is covered by the cohomogeneity one manifold with group diagram $G_2 \subset \lbrace \Spin(7), \Spin(7)\rbrace \subset \Spin(8)$. Note that the two $\Spin(7)$ groups are mapped to each other by an outer automorphism of $\Spin(8)$. Therefore $H_0$ contains $G_2$ as a subgroup of maximal rank and hence $H_0 = G_2$. Furthermore one of $K^\pm_0$, say $K^-_0$, must contain $\Spin(7) \subset \Spin(8)$. Since the outer automorphism of $\Spin(8)$ that maps one of the $\Spin (7)$ to the other has order $3$, it cannot cover a manifold with an exceptional orbit. Hence both $K^\pm$ contain the $\Spin(7)$. But then $K^- \cdot K^+ = \Spin(8)$ and the action is not primitive. The remaining case $N(\iota) = \Spin(5) \cdot \Spin(4)$ can be ruled out as in \cite{gwz}.\\
	Let $k \ge 11$. Then $\rk H \ge 3$. The simple spherical subgroups of $\Spin(k)$ are given by $\Sp(1)$, $\SU(3)$, $\Sp(2)$, $G_2$, $\SU(4)$, $\Spin(7)$, $\Spin(l)$ with the embeddings contained in Table \ref{appendix_table_spherical}. Suppose $H$ contains a simple factor $H^\prime$. If it is an $l \times l$-block it its ruled out by the Block Theorem. Now suppose $H^\prime = G_2$. It has a seven dimensional representation in $\Spin(7)$, which must degenerate. Furthermore $N(G_2)_0 = \Spin(k-8) \times G_2 \subset \Spin(k-8)\times \Spin(8)$. Since $H$ has at least rank $3$, there is an at least one dimensional subgroup $H^{\prime\prime}$ of $\Spin(k-8)$, such that $H^{\prime\prime} \cdot G_2$ acts on a subspace of $\Spin(k)$ orthogonal to $\Spin(8)$, but this cannot degenerate. Now suppose $H^\prime = \Sp(2)$. Then $\Sp(2)$ has a five dimensional representation in $\Spin (8)$ and an eight dimensional orthogonal to $\Spin(8)$. Both must degenerate and therefore $(l_-,l_+) = (5,11)$ and $K^-$ contains $\Spin(6)$ and $K^+$ contains $\Sp(3)$ as a normal factor. Furthermore $\rk H \le 3$ by primitivity, which means $k = 11$ and $\dim H \le 13$. By the lower and upper Weyl group bound $42 \le \dim G/H \le 2 \cdot (l_-+l_+) = 32$, a contradiction. $H^\prime = \SU(4)$ has a six dimensional representation contained in $\Spin (8) $ and an eight dimensional orthogonal to $\Spin(8)$ both must degenerate and hence $K^-$ contains $\Spin(7)\subseteq \Spin(8)$  and $K^+$ contains $\SU(5)$ as a normal subgroup. Let $\iota \in \SU(4)$ be the central involution. Then $N(\iota)_0 = \Spin(k-8)\times \Spin(8)$ acts with cohomogeneity one on $M^\iota_c$. But this action is obviously not primitive, since $k \ge 11$, a contradiction. $H^\prime = \Spin(7)$. This has a seven and an eight dimensional as well as a trivial representation in $\Spin(k)$. The seven and eight dimensional must degenerate in $\Spin(9)$ and the trivial one in $S^1 \cdot \Spin(7)$. $\Spin(9)$ contains the central element of $\Spin(k)$ and therefore $\vert W \vert  \le 4$ and by linear primitivity $\dim G \le 2 \cdot l_- + l_+ + \dim H = 52$, a contradiction. Now suppose $H^\prime= \Sp(1)$. If it is a block, then the Block Theorem applies. Therefore it is empedded in $\Spin(4) \times \ldots \times \Spin(4)$. If it projects to more than one of these $\Spin(4)$, then the isotropy representation contains three and four dimensional as well as trivial representations. Since $H$ cannot contain $\Sp(n)$, this factor must degenerate in $\Sp(1) \times \Sp(1)$ and $\Sp(2)$ or $\SU(3)$. But then $\rk H \le 2$ and $k = 8,9$. Hence $\Sp(1) \subset \Spin(4)$ as one of the factors. This must degenerate in $\SU(3)$ or $\Sp(2)$ and $l_- = 5$ or $7$. If $H$ contains no other simple factor, then $\dim G \le 4 \cdot(l_-+l_+) + \dim H \le 4 \cdot 8 + 6 = 38$, a contradiction. If $H$ contains another simple factor it can only be $\SU(3)$ and hence $l_+ = 11$ and $\dim H \le 2+3+8 = 13$ and therefore linear primitivity implies $\dim G \le 2 \cdot 18 + 13 = 49$, a contradiction. If $H^\prime = \SU(3)$, then its representation must degenerate in $\SU(4)$ and therefore $l_- = 7$. $H$ cannot contain another simple factor of rank at least two by Lemma \ref{obstructions_lem_simple} and also no simple rank one factor. Hence $l_+ = 7,1$. $\dim H \le 11$ and by linear primitvity $\dim G \le 2 \cdot (7+7) + \dim H\le 28+11 = 39$ or $\dim G \le 4 \cdot (1+7) +11 = 43$, a contradiction.
	This only leaves the case where $H_0$ is abelian. If both $l_\pm=1$ the action is not primitive since $\rk H \ge 3$. Hence $l_- =3$, but then $\dim G \le 4 \cdot 6 + 4 = 28$.  This finishes the proof.
\begin{center}
	$G=\SU(k)$, for $k \ge 5$
\end{center}
	Again we closely follow the proof in positive curvature of Proposition 11.2 in \cite{gwz}. In particular we just have to show that $H$ contains an $\SU(2)$ block since then the proof of \cite{gwz} applies. Let $\iota \in H_0$ be a non central involution. Then $N(\iota) = S(\U(k-2l)\U(2l))$. The proof of \cite{gwz} only fails in the case $(k,l) =(5,1)$. Then $\rk H = 2$. If $H = \SU(3)$, or $\Sp(2)$, then $H$ obviously contains a $2 \times 2$ block. Therefore assume that $H$ contains $\SU(2)$ which is diagonally in $S(\U(2)\U(2))\subset \SU(4)$. But then $\SU(2)$ has a four and a three dimensional representation. Since this $\SU(2)$ only commutes with a $2$ torus, this must degenerate in $\SU(2) \times \SU(2)$ and $\SU(3)$ or $\Sp(2)$. But $\SU(2)$ does not extend to $\SU(3)$ and the only extension to $\Sp(2)$ is effectively given as $\SO(5)/\SO(3)$. Thus we can assume $H = T^2$ and we  proceed as in \cite{gwz}, to rule out this case. Now we can follow the proof of \cite{gwz} proposition 11.2.
\end{proof}

	\newpage
	\section{Appendix}

This Appendix is  a collection of information used to proof the results in this paper and it mostly coincides with Appendix \RN{2} of \cite{gwz}.  We use the notation $\rho_n$, $\mu_n$ and $\nu_n$ for the standard representations of $\SO(n)$, $\SU(n)$ and $\Sp(n)$. The spin representations of $\SO(n)$ are denoted by $\Delta_n$ and the half spin representations by $\Delta_n^\pm$. $\phi$ denotes a two dimensional irreducible representation of $S^1$ and  all other $N$-dimensional irreducible representations are denoted by $\psi_N$.

Table \ref{appendix_table_spherical} coincides with Table B in \cite{gwz}. It is a list of simple Lie groups together with their simple spherical subgroups and their embeddings obtained in \cite{w2} Propositions 7.2 - 7.4. All the embeddings are the classical inclusions, except for $\Spin(7) \subset \SO(8)$, which is the embedding by the spin representation. 
		\begin{table}[H]
		\caption{Simple spherical subgroups of simple groups}
			\begin{center}
		\begin{tabular}{||c|c|c||}
			\hline
			$G$ & $H$ & Inclusions\\
			\hline \hline
			$\SU(n)$ & $\SU(2)$ & $\SU(2) \subset \SU(n)$ given by $p \, \mu_2 \oplus q \, id$ \\
			\hline
			$\SU(n)$ & $\Sp(2)$ & $\Sp(2) \subset \SU(4) \subset \SU(n)$\\
			\hline 
			$\SU(n)$ & $\SU(k)$ & $k \times k$ block\\
			\hline \hline
			$\SO(n)$ & $\Sp(1)$ & $\Sp(1) \subset \SO(n)$ given by $p \, \nu_1\oplus q \, id$\\
			\hline
			$\SO(n)$ & $\SU(3)$ & $\SU(3) \subset \SO(6) \subset \SO(n)$\\
			\hline
			$\SO(n)$ & $\Sp(2)$ & $\Sp(2) \subset \SO(8) \subset \SO(n)$\\
			\hline
			$\SO(n)$ & $G_2$ 	& $G_2 \subset \SO(7) \subset \SO(8) \subset \SO(n)$\\
			\hline
			$\SO(n)$ & $\SU(4)$ & $\SU(4) \subset \SO(8) \subset \SO(n)$\\
			\hline
			$\SO(n)$ & $\Spin(7)$ & $\Spin(7) \subset  \SO(8) \subset \SO(n)$\\
			\hline
			$\SO(n)$ & $\SO(k)$ & $k \times k$ block\\
			\hline \hline
			$\Sp(n)$ & $\Sp(1)$ & $\Sp(1) = \lbrace \diag(q,q,\ldots,q,1,\ldots,1) \vert \, q \in S^3 \rbrace$\\
			\hline
			$\Sp(n)$ & $\Sp(k)$ & $k \times k$ block\\
			\hline\hline
			$G_2$ 	 & $\SU(3)$ & maximal subgroup \\
			\hline
			$F_4, E_6,$ & $\Spin(k)$ & $\Spin(k) \subset \Spin(9) \subset F_4 \subset E_6 \subset E_7 \subset E_8$\\
			$E_7, E_8$  & & $k = 5, \ldots,9$ standard embedding \\
			\hline
			$F_4,E_6, E_7, E_8$ 	 & $\SU(3)$ &	$\SU(3) \subset \SU(4) \subset \Spin(8) \subset F_4 \subset E_6 \subset E_7 \subset E_8$\\
			\hline
			$F_4,E_6, E_7, E_8$ 	 & $G_2$ &	$G_2 \subset  \Spin(7)  \subset  \Spin(8) \subset F_4 \subset E_6 \subset E_7 \subset E_8$\\
			\hline
		\end{tabular}
		\end{center}
		\label{appendix_table_spherical}
	\end{table}

	Table \ref{appendix_table_homsphere} contains all transitive actions on spheres together with their isotropy representations. This is especially important when applying  the Isotropy Lemma. We note, that all at least two dimensional irreducible subrepresentations of the isotropy representation $K/H$ act transitively on the unit sphere.

	\begin{table}[H]
	\caption{Transitive actions on Spheres}
	\begin{center}
		\begin{tabular}{||c|c|c|c||}
			\hline
			Dimension & K & H & Isotropy representation\\
			\hline \hline
			$n$ 	& $\SO(n+1)$ & $\SO(n)$ & $\rho_n$\\
			\hline
			$2n+1$	& $\SU(n+1)$ & $\SU(n)$ & $\mu_n \oplus id$\\
			\hline
			$2n+1$	& $\U(n+1)$ & $\U(n)$ & $\mu_n \oplus id$\\
			\hline
			$4n+3$	& $\Sp(n+1)$ & $\Sp(n)$ & $\nu_n \oplus 3 id$\\
			\hline
			$4n+3$	& $\Sp(1) \times \Sp(n+1)$ & $ \Delta \Sp(1)\Sp(n)$ & $\nu_1 \otimes \nu_n \oplus \rho_3 \otimes id$\\
			\hline
			$4n+3$	& $\U(1) \times \Sp(n+1)$ & $\U(1)\Sp(n)$ & $\phi \otimes \nu_n \oplus \phi \otimes id \oplus id$\\
			\hline
			$15$	& $\Spin(9)$ & $\Spin(7)$ & $\rho_7 \oplus \Delta_8$\\
			\hline
			$7$ 	& $\Spin(7)$ & $G_2$ & $\phi_7$\\
			\hline
			$6$	& $G_2$ & $\SU(3)$ & $\mu_3$\\
			\hline
		\end{tabular}
	\end{center}
	\label{appendix_table_homsphere}
\end{table}
	
		Table \ref{appendix_table_cohom_one_even}, which coincides with \cite{gwz} Table F, contains all essential cohomogneity one actions on compact rank one symmetric spaces in even dimensions, together with their extensions.
	
	\begin{table}[H]
		\caption{Essential actions on rank one symmetric spaces in even dimensions}
		\begin{center}
			\setlength\tabcolsep{1.5pt}
			\begin{tabular}[h]{||c|c|c|c|c|c|c||}
				\hline
				$M$ & $G$ &  $K^-$ & $K^+$ & $H$ & $(l_-,l_+)$& $W$\\
				\hline \hline
				$\bb S^4$ & $\SO(3)$ &  $S(\O(2)\O(1))$ & $S(\O(1)\O(2))$ & $\bb Z_2 \oplus \bb Z_2$ & $(1,1)$& $D_3$\\
				\hline
				$\bb S^{14}$ & $\Spin(7)$ &  $\Spin(6)$ & $G_2$ & $\SU(3)$ & $(1,6)$& $D_2$\\
				\hline \hline
				$\bb C \bb P^{k+1}$ & $\SO(k+2)$ &  $\SO(2)\SO(k)$ & $\O(k+1)$ & $\bb Z_2 \cdot \SO(k)$ & $(1,k)$& $D_2$\\
				\hline
				$\bb C \bb P^{2k+1}$ & $\SU(2)\SU(k+1)$ &  $\Delta \SU(2)S^1\SU(k-1)$ & $S^1\U(k)$ & $T^2\SU(k-1)$ & $(2,2k-1)$& $D_2$\\
				\hline
				$\bb C \bb P^{6}$ & $G_2$ &  $\U(2)$ & $\bb Z_2 \SU(3)$ & $\bb Z_2 \SU(2)$ & $(1,5)$& $D_2$\\
				\hline
				$\bb C \bb P^{7}$ & $\Spin(7)$ &  $S^1\SU(3)$ & $\bb Z_2 \Spin(6)$ & $\bb Z_2 \SU(3)$ & $(1,7)$& $D_2$\\
				\hline
				$\bb C \bb P^{9}$ & $\SU(5)$ &  $S^1\Sp(2)$ & $S(\U(2)\U(3))$ & $S^1 \SU(2)^2$ & $(4,5)$& $D_2$\\
				\hline
				$\bb C \bb P^{15}$ & $\Spin(10)$ &  $S^1\SU(5)$ & $S^1\Spin(7)$ & $S^1\SU(4)$ & $(9,6)$& $D_2$\\
				\hline\hline
				$\bb H \bb P^{k+1}$ & $\SU(k+2)$ &  $\SU(2)\SU(k)$ & $\U(k+1)$ & $S^1\SU(k)$ & $(2,2k+1)$& $D_2$\\
				& $S^1\SU(k+2)$ &  $\Delta S^1\SU(2)\SU(k)$ & $S^1\U(k+1)$ & $T^2\SU(k)$ & &\\
				\hline\hline
				$\text{Ca} \bb P^{2}$ & $\Sp(3)$ &  $\Sp(2)$ & $\Sp(1)\Sp(2)$ & $\Sp(1)^2$ & $(11,8)$& $D_2$\\
				& $S^1\Sp(3)$ &  $\Delta S^1\Sp(2)$ & $S^1\Sp(1)\Sp(2)$ & $S^1\Sp(1)^2$ & $(11,8)$&\\
				& $\Sp(1)\Sp(3)$ &  $\Delta \Sp(1)\Sp(2)$ & $\Sp(1)^2\Sp(2)$ & $\Sp(1)^3$ & $(11,8)$&\\
				\hline
			\end{tabular}
		\end{center}
		\label{appendix_table_cohom_one_even}
	\end{table}

	In Table \ref{appendix_table_cohom_one_odd_sphere} all essential cohomogeneity one actions on odd dimensional spheres are given together with their extensions. The information about this is contained in \cite{st} and \cite{gwz} Table E.
	
	\begin{table}[H]
	\caption{Essential actions and extensions on odd dimensional Spheres}
	\begin{center}
		\setlength\tabcolsep{1.5pt}
		\begin{tabular}{||c|c|c|c|c|c|c|c||}
			\hline
			$n$ & $G$ & $\chi$ & $K^-$ & $K^+$ & $H$ & $(l_-,l_+)$& $W$\\
			\hline \hline
			$8k+7$ & $\Sp(2)\Sp(k+1)$ & $\nu_2 \hat\otimes \nu_{k+1}$ & $\Delta \Sp(2) \Sp(k-1)$ & $\Sp(1)^2\Sp(k)$ & $\Sp(1)^2\Sp(k-1)$ & $(4,4k+1)$ & $D_4$\\
			\hline 
			$4k+7$ & $\SU(2)\SU(k+2)$ & $\mu_2 \hat\otimes \mu_{k+2}$ & $\Delta \SU(2) \SU(k)$ & $S^1\SU(k+1)$ & $S^1\SU(k)$ & $(2,2k+1)$ & $D_4$\\
			
			 & $\U(2)\SU(k+2)$ & $\mu_2 \hat\otimes \mu_{k+2}$ & $\Delta \U(2) \SU(k)$ & $T^2\SU(k+1)$ & $T^2\SU(k)$ &  & $D_4$\\
		
			$7$ & $\SU(2)\SU(2)$ & $\mu_2 \hat\otimes \mu_{2}$ & $\Delta \SU(2) $ & $T^2$ & $S^1$ & $(2,1)$ & $D_4$\\
			\hline 
			$2k+3$ & $\SO(2)\SO(k+2)$ & $\rho_2\hat \otimes \rho_{k+2}$ & $\Delta \SO(2) \SO(k)$ & $\bb Z_2\SO(k+1)$ & $\bb Z_2\SO(k)$ & $(1,k)$ & $D_4$\\
			\hline 
			$15$ & $\SO(2)\Spin(7)$ & $\rho_2 \hat\otimes \Delta_7$ & $\Delta \SO(2) \SU(3)$ & $\bb Z_2\Spin(6)$ & $\bb Z_2\SU(3)$ & $(1,7)$ & $D_4$\\
			\hline 
			$13$ & $\SO(2)G_2$ & $\rho_2 \hat\otimes \phi_7$ & $\Delta \SO(2) \SU(2)$ & $\bb Z_2\Spin(3)$ & $\bb Z_2\SU(2)$ & $(1,7)$ & $D_4$\\
			\hline 
			$7$ & $\SO(4)$ & $\nu_1 \hat\otimes \nu_3$ & $S(\O(2)\O(1))$ & $S(\O(1)\O(2))$ & $\bb Z_2 \oplus \bb Z_2$ & $(1,1)$ & $D_6$\\
			\hline 
			$15$ & $\Spin(8)$ & $\rho_8 \oplus \Delta_8^\pm$ & $\Spin(7)$ & $\Spin(7)$ & $G_2$ & $(7,7)$ & $D_2$\\
			\hline 
			$13$ & $\SU(4)$ & $\mu_4 \oplus \rho_6$ & $\SU(3)$ & $\Sp(2)$ & $\SU(2)$ & $(5,7)$ & $D_2$\\
		
			 & $\U(4)$ & $\mu_4 \oplus \rho_6$ & $S^1\SU(3)$ & $S^1\Sp(2)$ & $S^1\SU(2)$ & $(5,7)$ & \\
			 \hline 
			 $19$ & $\SU(5)$ & $\Lambda^2\mu_5$ & $\Sp(2)$ & $\SU(2)\SU(3)$ & $\SU(2)^2$ & $(4,5)$ & $D_4$\\
			 
			 & $\U(5)$ & $\Lambda^2\mu_5$ & $S^1\Sp(2)$ & $S^1\SU(2)\SU(3)$ & $S^1\SU(2)^2$ & $(4,5)$ &  \\
			 \hline 
			 $31$ & $\Spin(10)$ & $\Delta_{10}^\pm$ & $\SU(5)$ & $\Spin(7)$ & $\SU(4)$ & $(9,6)$ & $D_4$\\
			  & $S^1\Spin(10)$ & $\Delta_{10}^\pm$ & $\Delta S^1\SU(5)$ & $S^1\Spin(7)$ & $S^1\SU(4)$ & $(9,6)$ &\\
			  \hline \hline
			  $7$ & $\SU(3)$ & $\ad$ & $S(\U(2)\U(1))$ & $S(\U(1)\U(2))$ & $T^2$ & $(2,2)$ & $D_3$\\
			  \hline 
			  $9$ & $\SO(5)$ & $\ad$ & $\U(2)$ & $\Spin(7)$ & $\SO(3)\SO(2)$ & $(2,2)$ & $D_4$\\
			  \hline 
			  $13$ & $G_2$ & $\ad$ & $\U(2)$ & $\U(2)$ & $T^2$ & $(2,2)$ & $D_6$\\
			  \hline 
			  $13$ & $\Sp(3)$ & $\psi_{14}$ & $\Sp(2)\Sp(1)$ & $\Sp(1)\Sp(2)$ & $\Sp(1)^3$ & $(4,4)$ & $D_3$\\
			  \hline 
			  $25$ & $F_4$ & $\psi_{26}$ & $\Spin(9)$ & $\Spin(9)$ & $\Spin(8)$ & $(8,8)$ & $D_3$\\
			  \hline
			
		\end{tabular}
	\end{center}
	\label{appendix_table_cohom_one_odd_sphere}
\end{table}

 Table \ref{appendix_table_odd_cohom_one} contains the group diagrams of examples and candidates for (quasi-)positively curved cohomogeneity one manifolds in odd dimensions. Note that most examples come from actions by $S^3 \times S^3$. For an imaginary unit quanternion $x \in S^3$, the induced $S^1$ subgroup of $S^3\times S^3$ with slopes $(p,q)$ is denoted by $C^x_{(p,q)}= \lbrace (e^{px\theta},e^{qx\theta})\vert \,\theta \in \bb R \rbrace$. More detailed information about these manifolds is contained in Section 4 of \cite{gwz}. Table \ref{appendix_table_odd_cohom_one} immediately gives some isomorphisms between the manifolds contained there: $P_1 = \bb S^7$, $Q_1 = W^7_{(2)}$, $E_1 = W^7_{(1)}$ and $B_1^{13} = B^{13}$. Note that by  \cite{gwz} Lemma 4.2 the actions on $B^7$, $P_k$, $Q_k$ and $R$ do not admit any extensions. The manifolds $E_p^7$ and $B_p^{13}$ admit an extension by $S^1$. Note furthermore, that the new examples occurring compared to the positively curved case are the Eschenburg Space $E_0^7$ and the Bazaikin Space $B_0^{13}$. Both admit metrics with positive sectional curvature almost everywhere (cf. \cite{k}). Namely $E_0\backslash B_+$ and $B_0 \backslash B_+$ are positively curved with this metric. The manifold $R$ does not admit a metric with quasipositive curvature by \cite{vz}.

	\begin{table}[H]
	\caption{Cohomogeneity one actions in odd dimensions}
	\begin{center}
		\setlength\tabcolsep{1.5pt}
		\begin{tabular}{||c|c|c|c|c|c|c|c||}
			\hline
			$M^n$ & $G$ &  $K^-$ & $K^+$ & $H$ & $\bar H$ & $(l_-,l_+)$& $W$\\
			\hline \hline
			$\bb S^7$ & $S^3 \times S^3$ & $C^i_{(1,1)} H$ & $C^j_{(1,3)} H$ & $Q$ & $\bb Z_2 \oplus \bb Z_2$ & $(1,1)$ & $D_6$\\
			\hline 
			$B^7$ & $S^3 \times S^3$ & $C^i_{(3,1)} H$ & $C^j_{(1,3)} H$ & $Q$ & $\bb Z_2 \oplus \bb Z_2$ & $(1,1)$ & $D_3$\\
			\hline 
			$W_{(1)}^7$ & $S^3 \times S^3$ & $\Delta S^3 \cdot  H$ & $C^i_{(1,2)} $ & $\bb Z_2$ & $1$ & $(3,1)$ & $D_2$\\
			\hline 
			$W_{(2)}^7$ & $S^3 \times S^3$ & $C^i_{(1,1)} H$ & $C^j_{(1,2)} H$ & $\bb Z_4 \oplus \bb Z_2$ & $\bb Z_2$ & $(1,1)$ & $D_4$\\
			\hline 
			$B^{13}$ & $\SU(4)$ & $\Sp(2)\cdot \bb Z_2$ & $\SU(2)\cdot S^1_{1,2}$ & $\SU(2)\cdot\bb Z_2$ & $\SU(2)\cdot\bb Z_2$ & $(7,1)$ & $D_2$\\
			\hline \hline
			$E_p^7, \, p\ge 0$ & $S^3 \times S^3$ & $\Delta S^3 \cdot  H$ & $C^i_{(p,p+1)} $ & $\bb Z_2$ & $1$ & $(3,1)$ & $D_2$\\
			\hline 
			$B_p^{13},\, p \ge 0$ & $\SU(4)$ & $\Sp(2)\cdot \bb Z_2$ & $\SU(2)\cdot S^1_{p,p+1}$& $\SU(2)\cdot\bb Z_2$ & $\SU(2)\cdot\bb Z_2$ & $(7,1)$ & $D_2$\\
			\hline \hline
			$\bb P_k, \, k\ge 1$ & $S^3 \times S^3$ & $C^i_{(1,1)} H$ & $C^j_{(k,k+2)} H$ & $Q$ & $\bb Z_2 \oplus \bb Z_2$ & $(1,1)$ & $D_3$ or $D_6$\\
			\hline
			$Q_k, \, k\ge 1$ & $S^3 \times S^3$ & $C^i_{(1,1)} H$ & $C^j_{(k,k+1)} H$ & $\bb Z_4 \oplus \bb Z_2$ & $\bb Z_2$ & $(1,1)$ & $D_4$\\
			\hline 
			$R$ & $S^3 \times S^3$ & $C^i_{(3,1)} H$ & $C^j_{(1,2)} H$ & $\bb Z_4 \oplus \bb Z_2$ & $\bb Z_2$ & $(1,1)$ & $D_4$\\
			\hline
		\end{tabular}			
	\end{center}
	\label{appendix_table_odd_cohom_one}
\end{table}

	\newpage
	\bibliographystyle{alpha}
	\bibliography{documents/references}
	
\end{document}